\newtheorem{example}{\textit{Example}}
\newtheorem{remark}{\textit{Remark}}
\def\dd{\text{d}}
\title{Lyapunov Function Partial Differential Equations for Chemical Reaction Networks: {Some Special Cases}\thanks{This work is supported by National Natural Science Foundation of China under Grant No. 11671418, 11271326 and 61611130124, and the Research Fund for the Doctoral Program of Higher Education of China under Grant No. 20130101110040.}}
\author{Zhou Fang\footnotemark[2]
	\and Chuanhou Gao\footnotemark[2]}
\begin{document}

\maketitle

% REQUIRED
\begin{abstract}
In this paper we develop a {method} to generate the Lyapunov function for
stability analysis for chemical reaction networks.
{Based on the Chemical Master Equation, we derive the Lyapunov Function partial differential equations (PDEs), whose solution approximates the scaling non-equilibrium potential and serves as the candidate Lyapunov function for the given network.}
We further prove that for any chemical reaction
network the solution (if exists) of the PDEs is dissipative. Moreover, the proposed method of Lyapunov Function PDEs is qualified for analyzing the asymptotic stability of complex balanced networks, all networks with $1$-dimensional stoichiometric subspace and some special networks with more than $2$-dimensional stoichiometric subspace if some moderate conditions are added.
Several examples are presented to illustrate the
efficiency of the method.
\end{abstract}

% REQUIRED
\begin{keywords}
chemical reaction network, mass action system, Lyapunov Function PDEs, non-equilibrium
potential, stability.
\end{keywords}

% REQUIRED
\begin{AMS}
35F20, 37C10, 60J28, 80A30, 93D20
\end{AMS}

\section{Introduction}
Chemical reactions networks (CRNs) arise abundantly in the fields
including chemistry, systems biology, process industry, and even
those seemingly irrelevant to chemistry such as mechanics and
ecology. The dynamics of a CRN often appears to be extremely
complex due to chemical interactions of cellular processes but
sometimes still exhibiting certain regular behaviors like period
solutions and stable-fixed-points. As a special subclass,
mass-action CRNs (CRNs assigned mass action kinetics, often named mass action systems) have the dynamics of the concentrations of the
various species captured by polynomial ordinary differential
equations (ODEs), and have received much attention since the pioneering
work \cite{Feinberg1972,Feinberg1987,Feinberg1988,Horn1972} emerged.
A major concern over this class of systems is to understand the relations between network structures and/or parameters and dynamical properties \cite{Angeli2007,Angeli2009,Szederkenyi2011}, especially in characterizing the stability (in the sense of Lyapunov) property \cite{Ali2014,Anderson2015,Horn1972,Schaft2013,Sontag2001,Rao2013}.
%{A major topic of this class of systems is to study their stable or unstable behaviors with the given network structures and parameters \mbox{\cite{Angeli2007,Angeli2009,Szederkenyi2011,Ali2014,Anderson2015,Horn1972,Schaft2013,Sontag2001,Rao2013}}.}
Following this line of study, we also focus on capturing stability of equilibria in mass action systems (MASs) in the current work. {Naturally, Lyapunov functions are desirable objects to prove stability of equilibria. In the field of CRNs, one important example is the pseudo-Helmholtz free energy function, proposed by Horn and Jackson\mbox{\cite{Horn1972}}. This Lyapunov function can be derived from the microscopic level using potential theory\mbox{\cite{Anderson2015}}. Here, we build further on this, and provide a general theory that bridges between the microscopic and the macroscopic level, thermodynamics and potential theory. Based on it, it is possible to derive or find a Lyapunov function for any MAS.}

{The early work on stability analysis mainly focused on exploring causal association from the network topology to the distribution of equilibria, and further to stability of equilibria.}
Thereinto, the weakly reversible structure, a requirement of complex balanced MAS, is the most active one.
{Horn et al.\mbox{\cite{Horn1972}} proved the well-known \textit{Deficiency Zero Theorem} that states a weakly reversible deficiency zero MAS to be complex balanced and to have only one equilibrium in each positive stoichiometric compatibility class. Moreover, each equilibrium in the complex balanced system is locally asymptotically stable, for which the pseudo-Helmholtz free energy function is proposed as the Lyapunov function. Feinberg\mbox{\cite{Feinberg1988}} extended this theorem to the well-known
\textit{Deficiency One Theorem} that suggests a weakly reversible
MAS to admit a sole equilibrium in each positive stoichiometric
compatibility class if some required conditions on network deficiency (not necessary to be zero) are satisfied. Based on these results, the global asymptotical stability of equilibria in a complex balanced MAS was further obtained\mbox{\cite{siegel2000global,Sontag2001}}} if the network is assumed to be persistent
\cite{Feinberg1987,Pantea2012,Craciun2013,Gopalkrishnan2014}, i.e., no stable boundary equilibrium if the initial point is in the interior of $\mathbb{R}^n_{\ge 0}$. Except for the weakly reversible structure, the reversible one, which acts as a special case of the former and is a requirement of detailed balanced MASs, is also the focus of attention. Feinberg \cite{Feinberg1989} derived necessary and sufficient
conditions, i.e., circuit conditions and spanning forest conditions,
to say a reversible MAS to be detailed balanced. Recently, van der Schaft et al.
\cite{Schaft2013} revisited this class of systems, and reported
a compact formulation to describe the dynamics utilizing the graph
theory (\textit{complex graph}). {The locally asymptotic stability of detailed balanced networks follows naturally from the fact that they are also complex balanced.
Still, the pseudo-Helmholtz free energy function serves as the Lyapunov function.}

{An important means for stability analysis of a MAS is to
construct a Lyapunov function according to the network structure.}
Although the {pseudo-Helmholtz free energy function} is capable for rendering
asymptotical stability of MASs equipped with the weakly reversible
or reversible structure, it fails to serve for those networks, {like
$3S_2\longrightarrow 3S_1\longrightarrow 2S_1+S_2$, where $S_1$ and
$S_2$ are the species. Clearly, this network is neither weakly reversible nor reversible. We name MASs with general structure (not necessarily weakly reversible or reversible) as balanced MASs if an equilibrium is admitted.} For balanced MASs, Angeli
and his coauthor \cite{Ali2014} proposed \textit{Piecewise Linear in Rates Lyapunov Functions} for stability analysis. The
existence of such functions (they defined the networks having this
attribute as $\mathscr{P}$ network set, which is actually a subset
of balanced MASs) can guarantee stability of equilibrium, and
further serves to establish asymptotical stability within the
corresponding positive stoichiometric compatibility class if the
Lyapunov function satisfies the LaSalle's condition. Another
possible solution to address the stability problem of a balanced MAS
comes from the concept of realization presented by Szederk\'enyi et.
al. \cite{Szederkenyi2011}. It is possible to find a complex balanced
or detailed balanced realization for the network in question, then
its asymptotical stability holds based on the dynamics equivalence
between the network and its realization.

{Different from} all of the above investigations stemming from macroscopic
deterministic analysis,
{some {literature contributes} to explaining the system properties from a microscopic stochastic viewpoint.
	Li and Yi \mbox{\cite{li2016systematic1,li2016systematic2}} connected the strength of attractions to the global attractor with the stationary distribution of a diffusion system, in which a white noise is added to the deterministic case.}
{In the meanwhile,}
Anderson et. al. \cite{Anderson2015}, { starting from a Markov chain model,} managed
to design a Lyapunov function from a microscopic stochastic concept
related to CRNs, termed as non-equilibrium potential that equals to
the minus logarithm of stationary distribution of state. They proved
that the scaling limit of non-equilibrium potential could act as a
Lyapunov function for some MASs. Moreover, such a limit value {coincides with} %is just
the well-known {pseudo-Helmholtz free energy function} in the case of complex
balanced MASs. This design thought is also valid for general
birth-death MASs and some examples of non-complex and non-detailed
balanced MASs.
These encouraging results motivate us to find a Lyapunov function for MASs rooted in their microscopic concepts.
In this paper, we take an approximation of the scaling non-equilibrium potential directly as a possible
Lyapunov function and carry out this idea on Chemical Master Equation.
A partial differential equation (PDE) is thus derived with
the solutions {serving} as %latent
{candidate} Lyapunov functions.
We have further proved
the equation solutions dissipative, and able to serve {as Lyapunov functions} for complex
balanced MASs, all networks with $1$-dimensional stoichiometric
subspace and some special networks with more than $2$-dimensional
stoichiometric subspace if some moderate conditions are added.

The {remainder} of this paper is organized as follows. Section $2$
revisits some basic concepts about CRNs and the macroscopic
deterministic dynamics of network derived from microscopic
stochastic model. This is followed by the development of the
Lyapunov Function PDEs in Section $3$. Section $4$ devotes to
analyzing the property of solutions of the Lyapunov Function PDEs,
and applications to complex balanced MASs. In Section $5$, we discuss
the validity of the Lyapunov Function PDEs for CRNs with
$1$-dimensional stoichiometric subspace, and further prove their efficacy in some special
examples of CRNs with more than $2$-dimensional stoichiometric
subspace in Section $6$. Finally, conclusions and a
conjecture to say the validity of the Lyapunov Function PDEs
to general balanced MASs are summarized in Section $7$.
~\\~\\
\noindent{\textbf{Mathematical Notation:}}\\
\rule[1ex]{\columnwidth}{0.8pt}
\begin{description}
	\item[\hspace{+0.8em}{$\mathbb{R}^n,\mathbb{R}^n_{\geq 0},\mathbb{R}^n_{\textgreater 0}$}]: $n$-dimensional real space, nonnegative and positive real space, respectively.
	\item[\hspace{+0.8em}{$x^{v_{\cdot i}}$}]: $x^{v_{\cdot i}}=\prod_{j=1}^{d}x_{j}^{v_{ji}}$, where $x,v_{\cdot i}\in\mathbb{R}^{d}$ and $0^{0}$ is defined to be $1$.
	\item[\hspace{+0.8em}{$\mathrm{Ln}(x)$}]: $\mathrm{Ln}(x)=\left(\ln{x_{1}},\ln{x_{2}},\cdots,\ln{{x}_{d}}\right)^{\top}$, where $x\in\mathbb{R}^{d}_{>{0}}$.
	\item[\hspace{+0.8em}{$\otimes$}]: Cartesian product.
	\item[\hspace{+0.8em}{$\mathscr{C}^{i}$}]: The function set whose elements are {$i$}-th continuous differentiable.
\item[\hspace{+0.8em}{$\mathbbold{0}_{n}$}]: {$n$-dimensional vector with every entry to be zero.}
\end{description}
\rule[1ex]{\columnwidth}{0.8pt}

\section{Preliminary on CRNs}In this section, we will sketch some
basic concepts about CRNs \cite{Feinberg1995} and revisit the
macroscopic dynamics of mass-action CRNs based on the microscopic
analysis \cite{Anderson2015}.
\subsection{Basic Concepts} Consider a network
{with} $n$ species, denoted by $\{S_1,\cdots,S_n\}$, and $r$
chemical reactions with the $i$th reaction $\mathcal{R}_i$ written
as $$\Sigma_{j=1}^{n} v_{ji}S_{j}\longrightarrow\Sigma_{j=1}^{n}
v'_{ji}S_{j},$$ where $v_{\cdot i}$, $v'_{\cdot i}\in
\mathbb{Z}^{n}_{\geq0}$ represent the complexes of reactant and
resultant, respectively, of this reaction. Note that we label each reaction as a
unidirectional reaction, here. If the $i$th reaction is reversible,
the reverse reaction is naturally covered by exchanging $v_{\cdot
	i}$ and $v'_{\cdot i}$ in the reaction.

Based on the above information, some basic concepts about CRNs may
be defined \cite{Feinberg1995}.

\begin{definition}[Chemical Reaction Network]\label{CRNdef}
	Denote the finite sets of
	species, complexes and reactions by $\mathcal{S}=\{S_{1},S_{2},\cdots,S_{n}\}$,
	$\mathcal{C}=\bigcup_{i=1}^{r} \{v_{\cdot i}, v'_{\cdot i}\}$ and
	$\mathcal{R}=\{v_{\cdot 1}\to v'_{\cdot 1}, \cdots, v_{\cdot r}\to v'_{\cdot r}\}$, respectively, and
	$\mathrm{Card}(\mathcal{C})=c$. If the following conditions hold
	\begin{description}
        \item[\hspace{1.5em}] {$\mathrm{(i)}$ There is no reaction $v_{\cdot i}\to v'_{\cdot i}\in \mathcal{R}$ ($i=1,\cdots,r$) such that $v_{\cdot i}=v'_{\cdot i}$;}
		\item[\hspace{1.5em}] $\mathrm{(ii)}$ The $j$th $(j=1,\cdots,n)$ entry of $v_{\cdot i}$		represents the stoichiometric coefficient of species $S_j\in\mathcal{S}$ in complex $v_{\cdot i}$,
	\end{description}
{then the triple {$(\mathcal{S},\mathcal{C},\mathcal{R})$}
		is called a chemical reaction network.}
\end{definition}

\begin{definition}[Stoichiometric Subspace]
	For a CRN {$(\mathcal{S},\mathcal{C},\mathcal{R})$}, the linear
	subspace $\mathscr{S}=\mathrm{span}\{v_{\cdot 1}-v'_{\cdot
		1},\cdots,v_{\cdot r}-v'_{\cdot r}\}$ is
	called the stoichiometric subspace of the network.
\end{definition}

\begin{definition}[Stoichiometric Compatibility Class]
	Let $\mathscr{S}$ be the stoichiometric subspace of a CRN
	{$(\mathcal{S},\mathcal{C},\mathcal{R})$} and $C \in
	\mathbb{R}^{n}_{\geq0}$ be a nonnegative $n$-dimensional vector,
	then $C+\mathscr{S}=\{C+\xi | \xi\in\mathscr{S}\}$ is a
	stoichiometric compatibility class of $C$ for the network;
	$(C+\mathscr{S})\bigcap\mathbb{R}^{n}_{\geq0}$ is a nonnegative
	stoichiometric compatibility class and
	$(C+\mathscr{S})\bigcap\mathbb{R}^{n}_{>0}$ is a positive
	stoichiometric compatibility class.
\end{definition}

When a CRN is assigned a mass action kinetics, the rate for reaction $v_{\cdot i}\to v'_{\cdot
	i}$ is evaluated by $k_ix^{v_{\cdot i}}$, where $k_i\in\mathbb{R}_{\textgreater 0}$ is the rate constant for this reaction, $x\in\mathbb{R}^n_{\geq 0}$ is the vector of concentrations $x_j$ of the chemical species $S_j,~j=1,\cdots,n$, and $$x^{v_{\cdot i}}:=\prod_{j=1}^{n}
x_{j}^{v_{ji}}.$$

\begin{definition}[Mass Action System]
	Denote the set of reaction rate constants by {$\mathcal{K}=(k_1,\cdots,k_r)$} with $k_i$ representing the rate constant for reaction $v_{\cdot i}\to v'_{\cdot i},~i=1,\cdots,r$. A CRN {$(\mathcal{S},\mathcal{C},\mathcal{R})$} taken together with the set of reaction rate constants $\mathcal{K}$ is called a mass action system, referred to as {$(\mathcal{S},\mathcal{C},\mathcal{R},\mathcal{K})$}.
\end{definition}

{The dynamics of a MAS $(\mathcal{S},\mathcal{C},\mathcal{R},\mathcal{K})$ that captures the changes of concentrations of every species over time $t$ is thus expressed as}
\begin{equation}\label{CRN-dynamics0}
\frac{\dd x}{\dd t}=\varGamma R(x),\quad\quad x\in\mathbb{R}^n_{\geq 0},
\end{equation}
{where $\varGamma\in\mathbb{Z}_{n\times r}$ is the stoichiometric matrix, defined by $\varGamma_{\cdot i}=v'_{\cdot i}-v_i$, and $R(x)$ is $r$-dimensional vector-valued function with $R_i(x)=k_ix^{v_{\cdot i}}$.}

\begin{definition}[Balanced MAS]
	For a MAS {$(\mathcal{S},\mathcal{C},\mathcal{R},\mathcal{K})$}, a vector of
	concentrations $x^*\in\mathbb{R}^n_{\textgreater 0}$ is called an
	equilibrium {if its dynamical equation \mbox{\cref{CRN-dynamics0}} satisfies $\varGamma R(x^*)=0$.} A MAS that admits an equilibrium is said to be a balanced MAS.
\end{definition}

\begin{definition}[Complex Balanced MAS]
	For a MAS {$(\mathcal{S},\mathcal{C},\mathcal{R},\mathcal{K})$}, a vector of
	concentrations $x^*\in\mathbb{R}^n_{\textgreater 0}$ is called a
	complex balanced equilibrium if at this state the combined rate of outgoing
	reactions from any complex is equal to the combined rate of incoming
	reactions to it, i.e.
	\begin{equation}\label{ComplexBalanecedCondition}
	  {\sum_{\{i|v_{\cdot i}=z\}} k_{i}(x^*)^{v_{\cdot i}}
	    =	\sum_{\{i|v'_{\cdot i}=z\}} k_{i}(x^*)^{v_{\cdot i}},
	    \qquad \forall~ z\in \mathcal{C}.}
	\end{equation}
	A MAS that admits a {complex balanced equilibrium} is said to
	be a complex balanced MAS.
\end{definition}

%Here, the reaction rate is a macroscopic concept that obeys thepower law in concentration variable for the mass action kinetics.
%We will revisit the deviation process of rate from its microscopical stochastic pattern to the macroscopical deterministic expression in the following.

{Eq. \mbox{\eqref{CRN-dynamics0}}
succeeds in modeling the CRN systems in the macroscopic level.
In the following subsection, we revisit its connection with the dynamic equation that models microscopic CRN systems.
}

\subsection{From Microscopic Stochastic Model to Macroscopic
	Deterministic Dynamics} In microscopic molecular level, a
{chemical reaction network system} is
commonly modeled by a continuous-time Markov chain, following which
every reaction takes place like a Poisson process
\cite{Anderson2011,Ethier2009}.
It allows for counting the
frequency that every reaction $\mathcal{R}_i~(i=1,\cdots,r)$ takes
place from initial time $0$ to time $t$ as
\begin{equation}{\label{ReactionFrequency}}
F_{i}(t)=\Omega_{i}\left(\int_{0}^{t}\lambda_{i}(N(\tau))\dd
\tau\right),
\end{equation}
where $N\in\mathbb{Z}^n_{\geq 0}$ is the vector of numbers of
molecules $N_j$ of every species $S_j,~j=1,\cdots,n$, indicating the state of the microscopic {system}, $\{\Omega_i(\cdot)\}_{i=1,\dots,r}$
are independent unit-rate Poisson processes that characterizes
reactions, and $\lambda_{i}(N)\in\mathbb{R}_{\geq 0}$ is a
intensity function reflecting the transition extent of
$\mathcal{R}_i$. The update for the state $N(t)$ is thus expressed,
according to {mass balance}, as
\begin{eqnarray}{\label{StochasticModel}}
N(t)&=&N(0)+\sum_{i=1}^{r}F_{i}(t)(v'_{\cdot i}-v_{\cdot i})\\
&=&N(0)+\sum_{i=1}^{r}\Omega_{i}\left(\int_{0}^{t}\lambda_{i}(N(\tau))\dd
\tau\right)(v'_{\cdot i}-v_{\cdot i}).\notag
\end{eqnarray}
This representation is often referred to as the stochastic model of
a {chemical reaction network system} in the
sense of microscopic level. Clearly, if $N(0)\in\mathbb{R}^n_{\geq
	0}$, then $N(t)\in(C+\mathscr{S})\bigcap\mathbb{R}^{n}_{\geq0}$.

One point should be noted that the model (\ref{StochasticModel})
only works up to the time {$\sup\{t|F_{i}(N(t))\textless
\infty, \forall i\}$, i.e., up to explosion of the process.}
{We thus restrict the subsequent discussion in case of non-explosive processes. In fact, this is not too strict for a Markov Chain. The explosive time can be almost surely infinite if some conditions are satisfied, such as that every transition intensity $\lambda_i(N(t))$ is bounded, and that the system is irreducible and finite time recurrent.} The mass action kinetics {indicates}
the intensity function to be modeled by
\begin{equation}{\label{IntensityFunction}}
\lambda_{i}(N)=\tilde{k}_{i}\prod_{j=1}^{n}\frac{N_{j}!}{(N_{j}-v_{ji})!}\mathbbold{1}_{\{N_{j}\geq v_{ji}\}},
\end{equation}
where $\tilde{k}_i\in\mathbb{R}_{\textgreater 0}$ is termed the microscopic rate constant and $\mathbbold{1}_{\{N_{j}\geq v_{ji}\}}$ is the characteristic
function, defined by
\begin{equation}
\mathbbold{1}_{\{N_{j}\geq v_{ji}\}}= \begin{cases}
1, & N_{j}\geq v_{ji},\\
0, &  N_{j}< v_{ji}.
\end{cases}
\end{equation}
{So long as the process is non-explosive}, the stochastic model of
(\ref{StochasticModel}) is equivalent to the corresponding
Kolmogorov's forward equation \cite{Anderson2011}, often called Chemical Master
Equation, that describes the probability distribution $P(N,t)$ of
$N(t)$ as
\begin{equation}{\label{CME}}
\frac{\dd P(N,t)}{\dd t}= \sum_{i=1}^{r} \lambda_{i}(N+v_{\cdot
	i}-v'_{\cdot i}) P(N+v_{\cdot i}-v'_{\cdot i},t) - P(N,t)
\sum_{i=1}^{r} \lambda_{i}(N).
\end{equation}

\begin{definition}[Stationary Distribution]
	A probability distribution $\pi(N)$ is a stationary distribution for
	the Markov chain on
	$(N(0)+\mathscr{S})\bigcap\mathbb{R}^{n}_{\geq0}$ if it satisfies
	\begin{equation}{\label{StationaryDistribution}}
	\sum_{i=1}^{r} \lambda_{i}(N+v_{\cdot i}-v'_{\cdot i}) \pi(N+v_{\cdot i}-v'_{\cdot i}) - \pi(N) \sum_{i=1}^{r} \lambda_{i}(N)=0,
	\end{equation}
	where $\pi(N+v_{\cdot i}-v'_{\cdot i})=0$ if $N+v_{\cdot
		i}-v'_{\cdot i}\notin
	(N(0)+\mathscr{S})\bigcap\mathbb{R}^{n}_{\geq0}$.
\end{definition}

The ergodic property of the continuous-time Markov chain states
\cite{Resnick2013} that if the chain on
$(N(0)+S)\bigcap\mathbb{R}^{d}_{\geq 0}$ is irreducible and
recurrent, then $\pi(N)$ {exists} and {is} unique.

We then revisit the macroscopic deterministic dynamics of the
underlying MAS derived from the microscopic stochastic model of
(\ref{StochasticModel}) by neglecting the random part under an
appropriate scaling level. %Consider
The differential form of
(\ref{StochasticModel}) can be divided into two parts{\mbox{\cite{oksendal2005applied}}}: the first one is
the deterministic part $\tilde{N}(t)$, i.e., the drift (expectation)
rate, satisfying
\begin{equation*}
\tilde{N}(t)\triangleq\lim_{\dd t\to 0^{+}}\frac{\text{E}\left[N(t+\dd
	t)-N(t)\big| N(t)\right]}{\dd t}=\sum_{i=1}^{r}
\lambda_{i}\big(N(t)\big)\big(v'_{\cdot i}-v_{i\cdot }\big),
\end{equation*}
while the second one is the random part $\hat{N}(t)$,
related to the following standard deviation rate
\begin{equation*}
\hat{N}(t)\triangleq\sqrt{\lim_{\dd t \to 0^{+}}\frac{{\text{Var}}\left[
	N(t+\dd t) - N(t) \big| N(t)\right]}{\dd
	t}}=\sqrt{\sum_{i=1}^{r}\lambda_{i}\big(N(t)\big)\big(v'_{\cdot i}-v_{\cdot
	i}\big)\big(v'_{\cdot i}-v_{\cdot i}\big)^\top},
\end{equation*}
{in which ``$\text{Var}$" is the variance operator.}
{Note that $\sum_{i=1}^{r}\lambda_{i}\big(N(t)\big)\big(v'_{\cdot i}-v_{\cdot
i}\big)\big(v'_{\cdot i}-v_{\cdot i}\big)^\top$ is positive semi-definite, so $\hat{N}(t)$ must exist.}

As the scale level increases, such as increasing from the molecular
level to molar level, the random part $\hat{N}(t)$, compared to the
deterministic part $\tilde{N}(t)$, contributes to the system smaller
and smaller, and can be ignored at last. Also, note the fact that $N_j(t)\gg v_{ji},~\forall i,j$, {then the stochastic model of \mbox{(\ref{StochasticModel})} can be well approximated by a deterministic one\mbox{\cite{anderson2015stochastic}}}
that describes the evolution of concentration vector $x(t)=\frac{N(t)}{A_v\cdot V}$, written as
\begin{equation}{\label{DeterministicModel}}
\frac{\dd x(t)}{\dd t} = \sum_{i=1}^{r} \tilde{k}_{i}
(A_{v}V)^{|v_{\cdot i}|-1} \left(\prod_{j=1}^{n}
x_{j}^{v_{ji}}\right) (v'_{\cdot i}-v_{\cdot i})
=\sum_{i=1}^{r} k_{i} x^{v_{\cdot i}} (v'_{\cdot i}-v_{\cdot i}),
\end{equation}
where $A_{v}$ is the Avogadro constant, $V$ the volume of the
system, $|v_{\cdot i}|$ indicates the sum of entries of vector
$v_{\cdot i}$, {$k_{i}=\tilde{k}_{i}(A_{v}V)^{|v_{\cdot i}|-1}$ is
the reaction rate coefficient for the $i$th reaction in the meaning
of macroscopic level} and $x^{v_{\cdot i}}=\prod_{j=1}^{n}
x_{j}^{v_{ji}}$ which indicates the mass-action kinetics. This expression is exactly the same as given in Eq. \mbox{(\ref{CRN-dynamics0})}. {More details about the derivation from the microscopic model to the macroscopic one may be referred to\mbox{\cite{anderson2015stochastic}}.}

One point needs to be noted that although it is a
	fact that the continuous-valued deterministic equation
	(\ref{DeterministicModel}) arises from the discrete probability
	model (\ref{CME}), the transformation between these two extremes are poorly
	understood. Some simulation analysis may be found in
	\cite{Higham2008}, and some connections between {deterministic models and stochastic counterpart} can be found in \cite{Kurtz1972,li2016systematic1,li2016systematic2}.
	
	\section{Lyapunov Function PDEs}
	This section contributes to deriving Lyapunov function PDEs for
	CRNs assigned mass action kinetics based on the relations between some microscopic
	concepts and macroscopic ones.
	\subsection{Lyapunov Function Derived from Stationary Distribution for Complex Balanced MASs}
	The CRNs theory \cite{Anderson2011,Ethier2009} reveals that there
	exists close relation between the microscopic stochastic dynamics
	and the macroscopic deterministic dynamics. Motivated by this fact,
	Anderson et al. \cite{Anderson2015} derived a Lyapunov
	function, a macroscopic concept, from the stationary distribution, a
	microscopic notion, for the stability analysis of complex balanced
	MASs.
	
	From the viewpoint of the macroscopic dynamics
	(\ref{DeterministicModel}), a MAS is complex balanced if
	$\exists~x^*\in\mathbb{R}^n_{\textgreater 0}$ such that for $\forall
	~z\in\mathcal{C}$ there is
	\begin{equation}
	\sum_{\{i| v_{\cdot i}=z\}} k_{i}\left(x^{*}\right)^{v_{\cdot i}}=
	\sum_{\{i| v'_{\cdot i}=z\}} k_{i}\left(x^{*}\right)^{v_{\cdot i}}.
	\end{equation}
	For this class of MASs, the {pseudo-Helmholtz free energy function}, defined by
	\begin{equation}{\label{Gibbs}}
	G(x)=\sum_{j=1}^n
	x_{j}\left(\ln(x_{j})-\ln(x^{*}_{j})-1\right)+x^{*}_{j},~~~x\in\mathbb{R}^n_{\textgreater
		0},
	\end{equation}
	is a frequently-used Lyapunov function \cite{Horn1972}.
	Despite a macroscopic concept, the {pseudo-Helmholtz free energy function} can be
	derived from the stationary distribution, a microscopic notion. As
	an example of a complex balanced MAS that admits an equilibrium of
	$x^*$ \cite{Anderson2010}, the stationary distribution $\pi(N)$ can be solved from
	(\ref{StationaryDistribution}) as
	\begin{equation*}
	\pi(N)=M\prod_{j=1}^{n}\frac{{x^{*}_{j}}^{N_{j}}}{N_{j}!},
	\end{equation*}
	where $M\in\mathbb{R}_{\textgreater 0}$ is a normalization factor. The
	non-equilibrium potential is thus expressed as
	$$-\mathrm{ln}(\pi(N))=-\mathrm{ln}M-\sum_{j=1}^{n}\mathrm{ln}\frac{{x^{*}_{j}}^{N_{j}}}{N_{j}!}. $$
	Further, Anderson et al. \cite{Anderson2015} proved that the
	scaling limit of non-equilibrium potential {coincides with} %is just right
	the {pseudo-Helmholtz free energy function}, i.e.,
	\begin{equation}
	\lim_{A_vV\to\infty} -\frac{1}{A_vV}\ln\big(\pi(A_{v}Vx)\big)=G(x).
	\end{equation}
	They also asserted that the scaling limit of non-equilibrium
	potential can suggest a Lyapunov function for the birth-death
	processes and some other special cases of non-complex balanced MASs
	\cite{Anderson2015}.
	
{Generally} speaking, the scaling limit of non-equilibrium potential
	provides a very effective way for some MASs to achieve the Lyapunov
	function with a definite physical meaning.
	However, it seems not easy to apply this {method} %way
	to more general MASs, {because solving the stationary Chemical Master Equation \mbox{\eqref{StationaryDistribution}} is usually a difficult task.} %The main reason is that solving (\ref{StationaryDistribution}) is usually a difficult task.
	To avoid this difficulty, %task,
	{we propose an alternative method, that is taking an approximation of the scaling non-equilibrium potential $-\frac{1}{A_vV}\ln\big(\pi(A_{v}Vx)\big)$ as a candidate Lyapunov function.}
	%an alternative way is to take an approximation of the scaling non-equilibrium potential $-\frac{1}{A_vV}\ln\big(\pi(A_{v}Vx)\big)$ directly as a candidate Lyapuonv function.
	Note that the former is naturally defined on a
	discrete set $\{x| A_{v}Vx\in \mathbb{Z}^{n}_{\geq 0}\}$ while the
	latter %requests to be
	is a continuous function defined on $\{x| x\in
	\mathbb{R}^{n}_{\geq 0}\}$.
	%Although this process seems not rigorous, it does not need, indeed, to know the explicit expression of the stationary distribution. {We only need to know that it exists.}
	{Obviously, the proposed method does not need to know the explicit expression of a stationary distribution, but only requires to know that a positive stationary distribution is existing.} We will follow this idea to derive Lyapunov function PDEs, and further solve Lyapunov functions for stability analysis of more general MASs below.
	
	\subsection{Derivation of Lyapunov Function PDEs}
	The approximation of the scaling non-equilibrium potential may be
	performed on the Chemical Master Equation (\ref{CME}) of a MAS
	{$(\mathcal{S},\mathcal{C},\mathcal{R},\mathcal{K})$}. Through dividing
	(\ref{CME}) by $-P(N,t)A_vV$, we can rewrite this equation as
	\begin{equation}\label{CME2}
	\frac{\dd}{\dd t} \left( - \frac{\ln\big(P(N,t)\big)}{A_{v}V}
	\right) =\sum_{i=1}^{r}\frac{\lambda_{i}(N)}{A_{v}V}-\sum_{i=1}^{r}
	\frac{\lambda_{i}(N+v_{\cdot i}-v'_{\cdot i})}{A_{v}V}
	\frac{P(N+v_{\cdot i}-v'_{\cdot i},t)}{P(N,t)},
	\end{equation}
	which is actually an ordinary differential equation about the
	scaling non-equilibrium potential. Assume that there exists a
	positive stationary distribution $\pi(N)$ for each {stoichiometric compatibility class} characterizing the MAS of interest, i.e.,
	the non-equilibrium potential exits. For simplicity
	of notations, denote the scaling non-equilibrium potential by
	$L(x)=-\frac{1}{A_vV}\ln\big(\pi(A_{v}Vx)\big)$, and then {by inserting}
	it into (\ref{CME2}) we get
	\begin{eqnarray}{\label{CME3}}
	&~&\frac{\dd L(x)}{\dd t}=0 \\
	~~~~~&=&\sum_{i=1}^{r}\frac{\lambda_{i}(A_{v}Vx)}{A_{v}V}-
	\frac{\lambda_{i}(A_{v}Vx+v_{\cdot i}-v'_{\cdot i})}{A_{v}V}
	\exp\left\{\frac{L(x)-L\left(x+\left(v_{\cdot i}-v'_{\cdot
			i}\right)/A_{v}V\right)}{1/A_{v}V}\right\}. \notag
	\end{eqnarray}
	
	Let {a continuous differentiable function} $f\in\mathscr{C}^1(\mathbb{R}^n_{\geq 0})$  approximate
	{the above function $L(x)$ ($x\in\{y|A_vVy\in\mathbb{Z}^n_{\geq 0}\}$).} %in the above equation,
	Then together with the fact $A_vV\gg
	(v_{ji}-v'_{ji}),~\forall~i,j$, it {indicates the exponential term in \mbox{\eqref{CME3}} to be approximated} as
	$$
	\exp\left\{\frac{f(x)-f\left(x+\left(v_{\cdot i}-v'_{\cdot
			i}\right)/A_{v}V\right)}{1/A_{v}V}\right\}\approx
	\exp\left\{(v'_{\cdot i}-v_{\cdot i})^{\top}\nabla f(x)\right\}.
	$$
    The remaining two terms $\frac{\lambda_{i}(A_{v}Vx)}{A_{v}V}$
	and $\frac{\lambda_{i}(A_{v}Vx+v_{\cdot i}-v'_{\cdot i})}{A_{v}V}$ can be thought as the same in the macroscopic coordinated and be approximated by
	$k_{i}x^{v_{\cdot i}}$. As a result, the Chemical Master Equation of
	(\ref{CME3}) {becomes} a first-order partial differential
	equation
	\begin{equation}{\label{LyaPDEs}}
	\sum_{i=1}^{r} k_{i}x^{v_{\cdot i}}-\sum_{i=1}^{r} k_{i}x^{v_{\cdot
			i}} \exp\left\{(v'_{\cdot i}-v_{\cdot i})^{\top}\nabla
	f(x)\right\}=0,~~~{x\in\mathbb{R}^n_{\textgreater 0}}.
	\end{equation}
{We can alternatively express this PDE according to the complexes set}
\begin{equation}{\label{LyaPDEs1}}
\sum_{\{i|v_{\cdot i}\in\mathcal{C}\}} k_{i}x^{v_{\cdot i}}-\sum_{\{i|v'_{\cdot i}\in\mathcal{C}\}} k_{i}x^{v_{\cdot i}} \exp\left\{(v'_{\cdot i}-v_{\cdot i})^{\top}\nabla
	f(x)\right\} =0,~~~{x\in\mathbb{R}^n_{\textgreater 0}}.
\end{equation}

Note that the above PDE \mbox{(\ref{LyaPDEs})} or \mbox{(\ref{LyaPDEs1})} is derived from the Chemical Master Equation by setting the solution as an approximation of the scaling non-equilibrium potential. Its existence seems dependent on the existence of the non-equilibrium potential, i.e., on that of a stationary distribution. {Although it is quite difficult to know whether a stationary distribution is existing in \mbox{(\ref{StationaryDistribution})}, it will not limit the applicability of the developed theory. We find that the Lyapunov Function PDE \mbox{(\ref{LyaPDEs})} or \mbox{(\ref{LyaPDEs1})} can be also achieved for some systems without the non-equilibrium potential. For example, the following CRN with absorption}
\begin{eqnarray*}
		&& S_1+2S_2 \to 3S_2,\\
		&& 2S_2 \to S_1+S_2,
	\end{eqnarray*}
{there will be eventually one of the species $S_2$ in which case none of the reactions can fire, so the potential does not exist. However, we can write out its Lyapunov Function PDE according to \mbox{(\ref{LyaPDEs})} or \mbox{(\ref{LyaPDEs1})}.} {A reasonable explanation may be that the solution of \mbox{(\ref{StationaryDistribution})} for this network would be to consider the QSD (quasi-stationary distribution) or a modification where the CRN cannot jump to the state with $S_2=1$. The latter has been done by Anderson et. al.\mbox{\cite{Anderson2015}} for birth-death processes with absorption. We thus stipulate that for those networks without a stationary distribution, the solution of \mbox{(\ref{StationaryDistribution})} would be to consider the QSD or a modification of the rates if the PDE is derived using potential theory.} In fact, the PDE may be also generated directly from the macroscopic dynamics of the CRN under study. In this {sense}, there always exists a corresponding PDE \eqref{LyaPDEs} for a CRN {no matter whether} the non-equilibrium potential {is existing or not}.

	To solve a PDE, it usually needs to know the related boundary
	conditions. For the above one, we still derive its boundary conditions
    based on the approximation to the Chemical Master
	Equation. Since it is very hard to directly analyze the boundary
	conditions for (\ref{LyaPDEs}), we manage to get an insight into
	them through the following example of a special MAS.
	
	\begin{example}
		Consider a MAS including a first-order reversible reaction	$S_{1}\leftrightharpoons S_{2}$. {The species set is $\mathcal{S}=\{S_1,S_2\}$, the complex set having the same form $\mathcal{C}=\{S_1,S_2\}$, and the reaction set is $\mathcal{R}=\{S_1\to S_2,S_2\to S_1\}$. Using the notations given in \mbox{\cref{CRNdef}}, the last two sets might be written as $\mathcal{C}=\bigcup_{i=1}^{2} \{v_{\cdot i}, v'_{\cdot i}\}$ and $\mathcal{R}=\{v_{\cdot 1}\to v'_{\cdot 1},v_{\cdot 2}\to v'_{\cdot 2}\}$,} where $v_{\cdot 1}=v'_{\cdot 2}=(1,0)^{\top}$, and $v_{\cdot 2}=v'_{\cdot 1}=(0,1)^{\top}$. {The domain of the stochastic model for this network is a nonnegative discrete set $\{x=(x_1,x_2)^\top|~A_{v}Vx_1,A_{v}Vx_{2}\in\mathbb{Z}_{>0}\}$, where $x$ is the vector of molar concentration. It thus defines two subsets of boundary points, denoted by $\mathcal{M}_1=\{x|~x_1=0,A_{v}Vx_{2}\in\mathbb{Z}_{>0}\}$ and $\mathcal{M}_2=\{x|~A_{v}Vx_{1}\in\mathbb{Z}_{>0},x_2=0\}$, respectively.}

{As an example of a boundary point $\bar{x}\in\mathcal{M}_1$, the last state of $\bar{x}$, just before the latest reaction, might be $A_{v}V\bar{x}+v_{\cdot 1}-v'_{\cdot 1}=(1,A_{v}Vx_{2}-1)^\top$ or $A_{v}V\bar{x}+$ $v_{\cdot 2}-v'_{\cdot 2}=(-1,A_{v}Vx_{2}+1)^\top$. By substituting these three states into Eq. \mbox{(\ref{IntensityFunction})}, we can calculate intensity functions as follows.}
\begin{equation*}
		\lambda_1(A_vV\bar{x})=\lambda_2(A_vV\bar{x}+v_{\cdot 2}-v'_{\cdot 2})=0,~\lambda_1(A_{v}V\bar{x}+v_{\cdot 1}-v'_{\cdot 1})\neq 0,~\lambda_{2}(A_{v}V\bar{x})\neq 0.
		\end{equation*}
	Further, by inserting these intensity functions into Eq. (\mbox{\ref{CME3}}), we may get a boundary condition for the Chemical Master Equation
		of this MAS as
		\begin{equation*}
		\frac{\lambda_{2}(A_{v}V\bar{x})}{A_{v}V}-\frac{\lambda_{1}(A_{v}V\bar{x}+v_{\cdot
				1}-v'_{\cdot 1})}{A_{v}V}
		\exp\left\{\frac{L(\bar{x})-L\left(\bar{x}+(v_{\cdot 1}-v'_{\cdot
				1})/A_{v}V\right)}{1/A_{v}V}\right\}=0, \quad \bar{x}\in \mathcal{M}_1.
		\end{equation*}
		{Finally, by} the approximation scheme from (\ref{CME3}) to (\ref{LyaPDEs}),
		the above boundary condition can be approximated {by}	\begin{equation}\label{BoundaryConditoinExample1}
		\lim_{x\to \bar{x},~x\in\mathbb{R}^2_{\textgreater 0}}
		k_{2}x^{v_{\cdot 2}}-k_{1}x^{v_{\cdot 1}}\exp\left\{(v'_{\cdot
			1}-v_{\cdot 1})^{\top}\nabla f(x)\right\}=0, \qquad \bar{x}\in\{0\}\times\mathbb{R}_{>0},
		\end{equation}
		{which serves as a boundary condition for the PDE of the corresponding system.}
		{Similarly, the boundary condition at $\bar{x}\in\mathcal{M}_2$ is presented as }
		\begin{equation}\label{BoundaryConditoinExample1_2}
		\lim_{x\to \bar{x},~x\in\mathbb{R}^2_{\textgreater 0}}
		k_{1}x^{v_{\cdot 1}}-k_{2}x^{v_{\cdot 2}}\exp\left\{(v'_{\cdot
			2}-v_{\cdot 2})^{\top}\nabla f(x)\right\}=0, \qquad \bar{x}\in\mathbb{R}_{>0}\times\{0\}.
		\end{equation}
		\end{example}
	
	The above example provides a clear insight into how to express the
	boundary conditions for the PDE (\ref{LyaPDEs}) or (\ref{LyaPDEs1}), i.e., identifying
	non-zero intensity functions with the given boundary points set. By Eq. \mbox{(\ref{IntensityFunction})}, whether or not the reaction's intensity function $\lambda_i(\cdot)$ is zero depends closely on its complexes.
	{At any boundary point $\bar{x}$, Eq. \mbox{\eqref{IntensityFunction}} tells us that the intensities of reactions with the same reactant complex are simultaneously positive or zero. We call the set of complexes which generate positive intensities at boundary point $\bar{x}$ as a boundary complex set of $\bar{x}$, and denoted it by $\mathcal{C}_{\bar{x}}$ in the context. The boundary complex set may vary from point to point. Also, from Eq. \mbox{\eqref{IntensityFunction}}, we can easily find that the intensity $\lambda_{i}(A_vV\bar{x}+v_{\cdot i}-v'_{\cdot i})$ is positive, only if the resultant complex of the corresponding reaction $v'_{\cdot i}$ lies in the boundary complex set of $\bar{x}$. In \textit{Example 1}, at the boundary point $(0,x_{2})^{\top}$, only the reaction with reactant complex $(0,1)^{\top}$ has positive intensity function. The boundary complex set of $(0,x_{2})^{\top}$ is thus to be $\{(0,1)^{\top}\}$. Based on the same analysis, the boundary complex set of $(x_1,0)^{\top}$ is $\{(1,0)^{\top}\}$. Also, at the boundary point $(0,x_{2})^{\top}$, the intensity function $\lambda_2(A_vV\bar{x}+v_{\cdot 2}-v'_{\cdot2})>0$ because the resultant complex in the second reaction lies in the corresponding boundary complex set.}

    {With these understandings, we can rewrite Eq. \mbox{\eqref{CME3}} at any boundary point $\bar{x}$ as}
    \begin{equation*}
	\sum_{\{i| v_{\cdot i}\in\mathcal{C}_{\bar{x}}\}}
	\frac{\lambda_{i}(A_{v}V\bar{x})}{A_{v}V}-
	\sum_{\{i| v'_{\cdot i}\in\mathcal{C}_{\bar{x}}\}}
	\frac{\lambda_{i}(A_{v}V\bar{x}+v_{\cdot i}-v'_{\cdot i})}{A_{v}V}
	\exp\left\{\frac{L(\bar{x})-L\left(\bar{x}+\left(v_{\cdot i}-v'_{\cdot i}\right)/A_{v}V\right)}{1/A_{v}V}\right\}=0. \notag
    \end{equation*}
    {By applying the same approximation scheme used above, we arrive at the boundary condition of the developed PDE \mbox{\eqref{LyaPDEs}}}
    \begin{equation}{\label{BoundaryCondition}}
	\lim_{
		\begin{tiny}
		\begin{array}{c}
		x \to \bar{x}\\
		x\in (\bar{x}+\mathscr{S})\cap\mathbb{R}^{n}_{\textgreater 0}
		\end{array}
		\end{tiny}
	}\sum_{\{i| v_{\cdot i}\in\mathcal{C}_{\bar{x}}\}} k_{i}x^{v_{\cdot
			i}}
	-\sum_{\{i| v'_{\cdot i}\in \mathcal{C}_{\bar{x}}\}} k_{i}x^{v_{\cdot i}}\exp\{(v'_{\cdot i}-v_{\cdot i})^{\top}\nabla f(x)\}=0,
	\end{equation}
    where $\bar{x}$ is any boundary point lies in the union of all stoichiometric compatibility classes, $\{x\in(y+\mathscr{S})\cap\mathbb{R}^{n}_{\geq0}~|~ y\in \mathbb{R}^{n}_{>0} ~\text{and}~x\notin\mathbb{R}^{n}_{>0}\}.$
	{Here, the limit notation is introduced to make the terms well defined in the case where $\nabla f(\cdot)$ does not converge at the boundary point.}
	{In \textit{Example 1}, the boundary condition \mbox{\eqref{BoundaryConditoinExample1}} can be written as}
	\begin{equation*}{\label{BoundaryConditionExample2}}
	\lim_{
		\begin{tiny}
		\begin{array}{c}
		x \to \bar{x},~x\in\mathbb{R}^2_{\textgreater 0}
		\end{array}
		\end{tiny}
	}\sum_{\{i| v_{\cdot i}\in\{(0,1)^\top\}\}} k_{i}x^{v_{\cdot
			i}}
	-\sum_{\{i| v'_{\cdot i}\in \{(0,1)^\top\}\}} k_{i}x^{v_{\cdot i}}\exp\{(v'_{\cdot i}-v_{\cdot i})^{\top}\nabla f(x)\}=0.
	\end{equation*}
	{which falls into the expression \mbox{\eqref{BoundaryCondition}} and illustrates the correctness of our derivation.}

Clearly, identifying the boundary complex set $\mathcal{C}_{\bar{x}}$ plays a key role on
	formulating the boundary conditions. Generally
	speaking, it is not easy to identify $\mathcal{C}_{\bar{x}}$,
	especially when the underlying CRN is {complicated}. A possible expression
	for it may be obtained from revisiting (\ref{IntensityFunction})
	where positive intensity function requests $N_j\textgreater
	v_{ji},~\forall~i,j$. We thus {can express a particular boundary complex set} as
	\begin{equation}{\label{naive boundary complex set}}
	\bar{\mathcal{C}}_{\bar{x}}=\left\{z\in\mathcal{C} ~|~
	\exists~\epsilon\textgreater 0~\text{such that}~\forall
	j=1,\cdots,n,~\bar{x}_{j}\geq \epsilon z_{j} \right\}
	\end{equation}
	which is referred to as naive boundary complex set in the context.

	The PDE (\ref{LyaPDEs}) and its boundary condition
	(\ref{BoundaryCondition}) will serve for generating the Lyapunov
	function for macroscopic deterministic mass-action CRNs. They are
	referred to as \textit{Lyapunov Function PDEs} throughout the paper.
	
	\section{Solutions of Lyapunov Function PDEs}
	This section focuses on analyzing the property and utility %special usage
	of
	solutions of Lyapunov Function PDEs if {they exist}.
	\subsection{Conditions for Solutions to Become Lyapunov Function}
	We firstly analyze the dissipativeness of solutions of the Lyapunov Function PDEs
	(\ref{LyaPDEs}) plus (\ref{BoundaryCondition}), a necessary property for solutions becoming Lyapunov functions, under the assumption that the solutions exist.
	
	\begin{theorem}{\label{Dissipativeness} For a MAS
			{$(\mathcal{S},\mathcal{C},\mathcal{R},\mathcal{K})$}} described by
		(\ref{DeterministicModel}), assume that there exists a solution
		{$f\in\mathscr{C}^{1}$} for its Lyapunov Function PDEs (\ref{LyaPDEs}) plus
		(\ref{BoundaryCondition}). Then, the solution $f(x)$ satisfies
		\begin{equation}{\label{eq dissipation of f}}
		\dot{f}\left(x\right)=\frac{\dd f\left(x\right)}{\dd t}\leq 0, ~~~~~~~~\forall~
		x\in\mathbb{R}^{n}_{\textgreater 0},
		\end{equation}
		where the equality holds if and only if $\nabla f(x) \perp
		\mathscr{S}$.
	\end{theorem}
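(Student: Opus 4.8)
The plan is to compute $\dot{f}(x)$ directly along the deterministic flow (\ref{DeterministicModel}) and then to read the PDE (\ref{LyaPDEs}) as an algebraic constraint that I combine with a single elementary convexity inequality. First I would write, by the chain rule,
\[
\dot{f}(x) = \nabla f(x)^\top \frac{\dd x}{\dd t} = \sum_{i=1}^r k_i x^{v_{\cdot i}} (v'_{\cdot i} - v_{\cdot i})^\top \nabla f(x).
\]
To lighten notation, set $a_i := k_i x^{v_{\cdot i}}$ and $b_i := (v'_{\cdot i} - v_{\cdot i})^\top \nabla f(x)$, so that $\dot{f}(x) = \sum_{i=1}^r a_i b_i$. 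On the open set $\mathbb{R}^n_{>0}$ every monomial satisfies $x^{v_{\cdot i}} > 0$ and every rate constant $k_i > 0$, hence $a_i > 0$ for all $i$; this strict positivity is what drives the equality analysis. In the same notation the PDE (\ref{LyaPDEs}) reads $\sum_{i=1}^r a_i = \sum_{i=1}^r a_i \exp(b_i)$, i.e.\ $\sum_{i=1}^r a_i(\exp(b_i) - 1) = 0$.

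The key step is the scalar inequality $t \le e^t - 1$, valid for every $t \in \mathbb{R}$ with equality if and only if $t = 0$; this follows from strict convexity of $g(t) := e^t - 1 - t$, whose unique minimum is $g(0) = 0$. Applying it to each $b_i$ and using $a_i > 0$ gives
\[
\dot{f}(x) = \sum_{i=1}^r a_i b_i \le \sum_{i=1}^r a_i (e^{b_i} - 1) = 0,
\]
the last equality being exactly the PDE constraint. More transparently, subtracting term by term yields $\dot{f}(x) = -\sum_{i=1}^r a_i\, g(b_i)$, a sum of nonpositive terms, which makes (\ref{eq dissipation of f}) immediate.

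For the equality characterization I would argue that $\dot{f}(x) = 0$ forces $\sum_i a_i g(b_i) = 0$; since each summand is nonnegative and $a_i > 0$, this is equivalent to $g(b_i) = 0$ for every $i$, i.e.\ $b_i = 0$ for every $i$. Recalling $b_i = (v'_{\cdot i} - v_{\cdot i})^\top \nabla f(x)$ and that $\mathscr{S} = \mathrm{span}\{v_{\cdot i} - v'_{\cdot i}\}_{i=1}^r$, the condition $b_i = 0$ for all $i$ says precisely that $\nabla f(x)$ annihilates every spanning vector of $\mathscr{S}$, hence $\nabla f(x) \perp \mathscr{S}$, and conversely. This establishes the ``if and only if'' clause.

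The argument is essentially routine once the PDE is recast as $\sum_i a_i(e^{b_i}-1)=0$; there is no serious obstacle. The only point requiring genuine care is the joint use of the \emph{strictness} of $e^t - 1 - t > 0$ for $t\neq 0$ and the \emph{strict positivity} $a_i > 0$ on $\mathbb{R}^n_{>0}$, since both are needed to upgrade ``the sum vanishes'' into ``each $b_i$ vanishes'' for the equality case. Note also that the interior statement uses only the PDE (\ref{LyaPDEs}); the boundary condition (\ref{BoundaryCondition}) plays no role here.
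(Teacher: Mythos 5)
Your proof is correct and follows essentially the same route as the paper: both identify $\dot f(x)$ with the linear part $\sum_i k_i x^{v_{\cdot i}}(v'_{\cdot i}-v_{\cdot i})^\top\nabla f(x)$ of the PDE and conclude $\dot f(x)=-\sum_i k_i x^{v_{\cdot i}}\bigl(e^{b_i}-1-b_i\bigr)\le 0$, with equality forcing every $b_i=0$ because the coefficients are strictly positive on $\mathbb{R}^n_{>0}$. The only (cosmetic) difference is that the paper certifies $e^{b_i}-1-b_i\ge 0$ via the Lagrange remainder $\tfrac{e^{\eta_i}}{2}b_i^2$ of a second-order Taylor expansion, whereas you use the elementary convexity inequality $e^t\ge 1+t$ directly.
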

	
	\begin{proof}
		Reorganize the PDE $(\ref{LyaPDEs})$ to be
		\begin{equation*}
		\sum_{i=1}^{r}k_{i}x^{v_{\cdot i}}\left(1-\exp\left\{(v'_{\cdot
			i}-v_{\cdot i})^{\top}\nabla f(x)\right\}\right)=0
		\end{equation*}
		and further perform the Taylor expansion of $\exp\left\{(v'_{\cdot
			i}-v_{\cdot i})^{\top}\nabla f(x)\right\}$ with respect to zero,
		then we have
		\begin{equation*}
		\sum_{i=1}^{r}k_{i}x^{v_{\cdot i}}(v'_{\cdot i}-v_{\cdot
			i})^{\top}\nabla f(x) +\sum_{i=1}^{r}k_{i}x^{v_{\cdot
				i}}\frac{\text{e}^{\eta_{i}}}{2}\big[(v'_{\cdot i}-v_{\cdot
			i})^{\top}\nabla f(x)\big]^{2} =0,
		\end{equation*}
		where $\eta_{i}\in\mathbb{R}$ lies between $0$ and $(v'_{\cdot
			i}-v_{\cdot i})^{\top}\nabla f(x)$. Since for $\forall~
		x\in\mathbb{R}^{n}_{>0}$ there is
		$$\dot{f}(x)=\dot{x}^\top\nabla f(x)=\sum_{i=1}^{r}k_{i}x^{v_{\cdot i}}(v'_{\cdot i}-v_{\cdot
			i})^{\top}\nabla f(x),$$ we get
		$$\dot{f}(x)=-\sum_{i=1}^{r}k_{i}x^{v_{\cdot
				i}}\frac{\text{e}^{\eta_{i}}}{2}\big[(v'_{\cdot i}-v_{\cdot
			i})^{\top}\nabla f(x)\big]^{2}\leq 0,$$
		where the equality holds if and only if for $\forall~i,~(v'_{\cdot i}-v_{\cdot
			i})^{\top}\nabla f(x)=0$, i.e., $\nabla f(x) \perp \mathscr{S}$.
	\end{proof}
	
	\begin{remark}
		The dissipativeness of $f(x)$ means that it has one {of the necessary}
		properties to become a Lyapunov function. In addition, this property
		implies that $-f(x)$ will always increase as time goes by, which
		further indicates that there may be a close relation between $-f(x)$
		and the entropy function, an important concept in thermodynamics. A
		possible point of future research may be to define or derive the
		entropy expression based on the Lyapunov Function PDEs instead of
		the Gibbs' Equation.
	\end{remark}
	
	We further derive the conditions that the {non-dissipative} point of $f(x)$ is the equilibrium point of the MAS.
	
	\begin{theorem}{\label{thm strict dissipation}}
		For a MAS {$(\mathcal{S},\mathcal{C},\mathcal{R},\mathcal{K})$} described by
		(\ref{DeterministicModel}), assume that  its Lyapunov Function PDEs (\ref{LyaPDEs}) and
		(\ref{BoundaryCondition}) admit a solution
		{$f\in\mathscr{C}^{2}$}, and moreover, there exists a region $D\subset\mathbb{R}^{n}_{>0}$ such that $\forall~x\in D$ and $\forall~ \mu \in \mathscr{S}$ {we have}
		\begin{equation}{\label{eq. strictly convex condition}}
		\begin{array}{c}
		\mu^{\top} \nabla^{2} f(x) \mu \geq 0~~ \text{with equality hold if and only if $\mu=\mathbbold{0}_{n}$}.
		\end{array}
		\end{equation}
		Then, for all $x\in D$, $\dot{f}(x)=0$ if and only if $x$ is an equilibrium of the MAS.
	\end{theorem}
	\begin{proof}
		The necessity is obvious. For the sufficiency, \textit{Theorem \ref{Dissipativeness}} suggests that {for any $x\in D$,} {$\dot{f}(x)=0$} if and only if $\nabla f(x)\perp \mathscr{S}$. By taking the derivative of \eqref{LyaPDEs} with respect to $x$ on both sides, and further inserting the condition $\nabla f(x)\perp \mathscr{S}$, we have
		\begin{equation*}
		\nabla^{2} f(x)
		\left[\sum_{i=1}^{r} k_{i}x^{v_{\cdot i}} (v'_{\cdot i}-v_{\cdot i})\right] = \mathbbold{0}_{n},
		\end{equation*}
		i.e.,
		\begin{equation*}
		\left[\sum_{i=1}^{r} k_{i}x^{v_{\cdot i}} (v'_{\cdot i}-v_{\cdot i})\right]^{\top}
		\nabla^{2} f(x)
		\left[\sum_{i=1}^{r} k_{i}x^{v_{\cdot i}} (v'_{\cdot i}-v_{\cdot i})\right] = 0.
		\end{equation*}
		Note that the term $\sum_{1}^{r} k_{i}x^{v_{\cdot i}} (v'_{\cdot i}-v_{\cdot i})$ lies in $\mathscr{S}$, so we get $\sum_{1}^{r} k_{i}x^{v_{\cdot i}} (v'_{\cdot i}-v_{\cdot i})=\mathbbold{0}_{n}$ from the condition \eqref{eq. strictly convex condition}, which means that $x\in D$ should be an equilibrium of the MAS. This completes the proof.
	\end{proof}
	
	\begin{remark}
		Theorem \ref{thm strict dissipation} reveals that {for a balanced MAS {$(\mathcal{S},\mathcal{C},\mathcal{R},\mathcal{K})$} the solution of its Lyapunov Function PDEs (if exists) is strictly dissipative and, therefore, a good candidate {for a} Lyapunov function, provided that the solution is twice differentiable and convex in $D\bigcap\big(x^{*}+\mathscr{S}\big)\bigcap\mathbb{R}^{n}_{\geq 0}$.}
	\end{remark}
	
	Finally, we give conditions which indicates the solution to be indeed a Lyapunov function.
	
	\begin{theorem}{\label{thm lya function}}
		For a MAS {$(\mathcal{S},\mathcal{C},\mathcal{R},\mathcal{K})$} governed by
		(\ref{DeterministicModel}), let $x^{*}\in\mathbb{R}^n_{\textgreater 0}$ be one of its equilibrium points. Assume that the Lyapunov Function PDEs (\ref{LyaPDEs}) and
		(\ref{BoundaryCondition}) of the MAS admit a solution
		{$f\in\mathscr{C}^{2}$}, and moreover, there exists a region {$D=\mathcal{N}(x^*)=\delta(x^{*})\bigcap\big(x^{*}+\mathscr{S}\big)\bigcap\mathbb{R}^{n}_{\textgreater 0}$}, where $\delta(x^{*})$ is a neighborhood of $x^*$, such that $\forall~x\in\mathcal{N}(x^*)$ the solution $f(x)$ satisfies (\ref{eq. strictly convex condition}). Then $f(x)$ can act as a Lyapunov function rendering $x^{*}$ to be locally asymptotically stable with respect to all initial conditions in $\mathcal{N}(x^*)\bigcap\{x|f(x)\textless \inf_{\{y\in\partial_{\mathcal{N}(x^*)}\}}f(y)\}$.
	\end{theorem}
	\begin{proof}
		Since $f(x)$ satisfies (\ref{eq. strictly convex condition}) in $\mathcal{N}(x^*)$,
		$f(x)$ is strictly convex in this region.
		{The strict convexity together with the fact, $\nabla f(x^{*}) \perp \mathscr{S}$ (by \textit{Theorem \mbox{\ref{Dissipativeness}}}), implies the function to be lower bounded by $f(x^{*})$.}
		{Also, the strict convexity suggests that no other state except $x^{*}$ can make $\nabla f(x) \perp \mathscr{S}$ and, therefore, that $x^{*}$ is the sole equilibrium in this region (by \textit{Theorem \mbox{\ref{Dissipativeness}}}).}
		{Thus, by \textit{theorem \mbox{\ref{thm strict dissipation}}}, this fact states $\dot{f}(x)\leq 0$ with equality hold if and only if $x=x^*$.}
		
		For any initial point $x(0)\in\mathcal{N}(x^*)\bigcap\{x|f(x)\textless \inf_{\{y\in\partial_{\mathcal{N}(x^*)}\}}f(y)\}$, since $\dot{f}(x)\leq 0$, the state trajectory of the mass action system starting from $x(0)$ will be bounded in the region $\mathcal{N}(x^*)\bigcap\{x|f(x)\textless \inf_{\{y\in\partial_{\mathcal{N}(x^*)}\}}f(y)\}$. Therefore, if $f(x)$ is selected as the Lyapunov function, then $x^{*}$ is locally asymptotically stable with respect to all initial conditions in $\mathcal{N}(x^*)\bigcap\{x|f(x)\textless \inf_{\{y\in\partial_{\mathcal{N}(x^*)}\}}f(y)\}$.
	\end{proof}
	
	It is clear that the Lyapunov Function PDEs \mbox{\eqref{LyaPDEs}} and
	\mbox{\eqref{BoundaryCondition}} {have potentials} to generate a solution {serving as} the Lyapunov function for MASs with some moderate conditions {satisfied}.
	We try our hands at a class of special MASs, i.e., complex balanced MASs, to test the method of the PDEs in the following.
	\subsection{Test on Complex Balanced MASs}
	We will demonstrate that the Lyapunov Function PDEs {work} for
	complex balanced MASs.
	As mentioned in Section {$3.1$}, a complex balanced MAS admits an
	equilibrium $x^*$ satisfying the relation \eqref{ComplexBalanecedCondition}.
	Moreover, the equilibrium was proved locally asymptotically stable
	through taking the {pseudo-Helmholtz free energy function} as the Lyapunov function
	\cite{Horn1972,Rao2013}.
	To show the power of Lyapunov function PDEs,
	we verify that the {pseudo-Helmholtz free energy function} is one of their solutions.
	
	%An immediate idea is to verify that the {pseudo-Helmholtz free energy function} is a solution of the Lyapunov Function PDEs, %which inturn indicates that the Lyapunov Function PDEs {work} for complexbalanced MASs.
	
	\begin{theorem}{\label{thm solution of complex balanced crn}}
		For a MAS {$(\mathcal{S},\mathcal{C},\mathcal{R},\mathcal{K})$} that admits a
		complex balanced equilibrium $x^*\in\mathbb{R}^n_{\textgreater 0}$,
		the {pseudo-Helmholtz free energy function} of (\ref{Gibbs}) is a solution of the
		corresponding Lyapuonv Function PDEs (\ref{LyaPDEs}) (or equivalently \eqref{LyaPDEs1}) and
		(\ref{BoundaryCondition}) whatever the boundary complex set is.
	\end{theorem}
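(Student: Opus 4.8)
The plan is to verify directly that the Gibbs' free energy $G$ of (\ref{Gibbs}) satisfies both the PDE (\ref{LyaPDEs}) and the boundary condition (\ref{BoundaryCondition}), the whole argument resting on applying the complex balanced relation one complex at a time. First I would compute the gradient: differentiating (\ref{Gibbs}) gives $\partial G/\partial x_j=\ln(x_j)-\ln(x_j^*)$, so that $\nabla G(x)=\mathrm{Ln}(x)-\mathrm{Ln}(x^*)$. Substituting this into the exponential term in (\ref{LyaPDEs}) turns it into a monomial in the componentwise quotient $x/x^*$,
$$
\exp\left\{(v'_{\cdot i}-v_{\cdot i})^{\top}\nabla G(x)\right\}=\prod_{j=1}^{n}\left(\frac{x_j}{x_j^*}\right)^{v'_{ji}-v_{ji}},
$$
whence
$$
k_i x^{v_{\cdot i}}\exp\left\{(v'_{\cdot i}-v_{\cdot i})^{\top}\nabla G(x)\right\}=k_i (x^*)^{v_{\cdot i}}\left(\frac{x}{x^*}\right)^{v'_{\cdot i}}.
$$
This rewriting is the key step, because it exposes the equilibrium reactant flux $k_i(x^*)^{v_{\cdot i}}$ weighting a power of $x/x^*$ read off from the product complex $v'_{\cdot i}$ --- exactly the two ingredients on which complex balance operates.

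Next I would regroup both sides of (\ref{LyaPDEs}) by complex instead of by reaction. The left side reads $\sum_{z\in\mathcal{C}} x^z \sum_{\{i\,|\,v_{\cdot i}=z\}} k_i$, and the right side, after the substitution above and grouping by the product complex $v'_{\cdot i}=z$, reads $\sum_{z\in\mathcal{C}} (x/x^*)^z \sum_{\{i\,|\,v'_{\cdot i}=z\}} k_i (x^*)^{v_{\cdot i}}$. For each fixed $z$ the complex balanced condition of Section~3.1 gives $\sum_{\{i\,|\,v'_{\cdot i}=z\}} k_i(x^*)^{v_{\cdot i}}=\sum_{\{i\,|\,v_{\cdot i}=z\}} k_i(x^*)^{v_{\cdot i}}=(x^*)^z\sum_{\{i\,|\,v_{\cdot i}=z\}}k_i$. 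Inserting this collapses $(x/x^*)^z(x^*)^z=x^z$, so the right side equals the left side term by term in $z$, which establishes (\ref{LyaPDEs}).

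Finally, for the boundary condition I would repeat the identical computation with every reaction sum restricted to those whose reactant complex (resp.\ product complex) lies in $\mathcal{C}_{\bar{x}}$. Since the complex balanced identity holds for each individual $z$ separately, restricting both regrouped sums to $z\in\mathcal{C}_{\bar{x}}$ preserves the term-by-term cancellation, so the bracketed expression in (\ref{BoundaryCondition}) vanishes identically on $\mathbb{R}^n_{>0}$; in particular its limit as $x\to\bar{x}$ is zero, for any choice of $\mathcal{C}_{\bar{x}}$. This per-complex structure is precisely what makes the claim ``whatever the boundary complex set is'' hold. I do not expect a genuine analytic obstacle here --- the boundary equation is satisfied as an identity even before the limit is taken --- so the only thing to get right is the bookkeeping of which reaction indexes enter each restricted sum, together with the observation that complex balance never needs the whole of $\mathcal{C}$ at once but only the matching of incoming and outgoing equilibrium fluxes at each complex on its own.
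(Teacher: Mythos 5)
Your proposal is correct and follows essentially the same route as the paper's proof: compute $\nabla G=\mathrm{Ln}(x/x^*)$, rewrite each exponential term as $k_i(x^*)^{v_{\cdot i}}(x/x^*)^{v'_{\cdot i}}$, regroup both sums by complex, and apply the complex balanced identity at each $z$ separately, with the boundary condition handled by the same per-complex cancellation restricted to $\mathcal{C}_{\bar{x}}$. No gaps; the observation that the restricted expression vanishes identically before taking the limit is exactly why the conclusion is independent of the choice of boundary complex set.
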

	
	\begin{proof}
		For $\forall~ x\in
		(x(0)+\mathscr{S})\cap\mathbb{R}^{n}_{\textgreater 0}$, the gradient
		of the {pseudo-Helmholtz free energy function} is
		\begin{equation*}
		\nabla G(x)= \text{Ln}\left(\frac{~x~}{~x^{*}}\right)
		=\left(\ln\left(\frac{x_{1}}{x^{*}_{1}}\right),\ln\left(\frac{x_{2}}{x^{*}_{2}}\right),\cdots,\ln\left(\frac{x_{n}}{x^{*}_{n}}\right)\right)^{\top}.
		\end{equation*}
		Plugging it into the left hand side (L.H.S) of (\ref{LyaPDEs1}) yields
		\begin{eqnarray*}
			\text{L.H.S of Eq. (\ref{LyaPDEs1})}&=&\sum_{\{i|v_{\cdot i}\in\mathcal{C}\}}
			k_{i}x^{v_{\cdot i}}-\sum_{\{i|v'_{\cdot i}\in\mathcal{C}\}} k_{i}x^{v_{\cdot i}}
			\exp\left\{(v'_{\cdot i}-v_{\cdot i})^{\top}\text{Ln}\left(\frac{~x~}{~x^{*}}\right)\right\}\\
			&=& \sum_{z\in\mathcal{C}}\left(\sum_{\{i|v_{\cdot i}=z\}} k_{i}x^{v_{\cdot i}}-\sum_{\{i|v'_{\cdot i}=z\}} k_{i}x^{v_{\cdot i}} \exp\left\{(v'_{\cdot i}-v_{\cdot i})^{\top}\text{Ln}\left(\frac{~x~}{~x^{*}}\right)\right\} \right)\\
			&=& \sum_{z\in\mathcal{C}}\left( \sum_{\{i|v_{\cdot i}=z\}}
			k_{i}x^{v_{\cdot
					i}}-\left(\frac{~x~}{~x^{*}}\right)^{z}\sum_{\{i|v'_{\cdot i}=z\}}
			k_{i}(x^*)^{v_{\cdot i}} \right)\\
			&=& \sum_{z\in\mathcal{C}} \left(\frac{~x~}{~x^{*}}\right)^{z} \cdot  \left(\sum_{\{i|v_{\cdot i}=z\}}
			k_{i} (x^{*})^{v_{\cdot i}}- \sum_{\{i|v'_{\cdot i}=z\}}
			k_{i}(x^*)^{v_{\cdot i}} \right)
			=0,
		\end{eqnarray*}
		where the last equality follows immediately from {the complex balanced condition \mbox{\eqref{ComplexBalanecedCondition}}}. Hence, $G(x)$ satisfies the PDE of \eqref{LyaPDEs1} and (\ref{LyaPDEs}).
		
		We further verify that $G(x)$ satisfies the boundary condition of
		(\ref{BoundaryCondition}). Let $\mathcal{C}_{\bar{x}}$ be a boundary
		complex set induced by any boundary point $\bar{x}$, then the left
		hand side of (\ref{BoundaryCondition}) is
		\begin{eqnarray*}
			& & \lim_{
				\begin{tiny}
					\begin{array}{c}
						x \to \bar{x}\\
						x\in (x(0)+\mathscr{S})\cap\mathbb{R}^{n}_{\textgreater 0}
					\end{array}
				\end{tiny}
			}\sum_{\{i| v_{\cdot i}\in\mathcal{C}_{\bar{x}}\}} k_{i}x^{v_{\cdot
					i}} -\sum_{\{i| v'_{\cdot i}\in \mathcal{C}_{\bar{x}}\}}
			k_{i}x^{v_{\cdot i}}\exp\left\{(v'_{\cdot i}-v_{\cdot i})^{\top}\text{Ln}\left(\frac{~x~}{~x^{*}}\right)\right\}\\
			&=& \lim_{
				\begin{tiny}
					\begin{array}{c}
						x \to \bar{x}\\
						x\in (x(0)+\mathscr{S})\cap\mathbb{R}^{n}_{\textgreater 0}
					\end{array}
				\end{tiny}
			} \sum_{z\in\mathcal{C}_{\bar{x}}}\left(\sum_{\{i|v_{\cdot i}=z\}} k_{i}x^{v_{\cdot i}}-\sum_{\{i|v'_{\cdot i}=z\}} k_{i}x^{v_{\cdot i}} \exp\left\{(v'_{\cdot i}-v_{\cdot i})^{\top}\text{Ln}\left(\frac{~x~}{~x^{*}}\right)\right\} \right)\\
			&=&\lim_{
				\begin{tiny}
					\begin{array}{c}
						x \to \bar{x}\\
						x\in (x(0)+\mathscr{S})\cap\mathbb{R}^{n}_{\textgreater 0}
					\end{array}
			\end{tiny}} \sum_{z\in\mathcal{C}_{\bar{x}}}
		\left(\frac{~x~}{~x^{*}}\right)^{z} \cdot  \left(\sum_{\{i|v_{\cdot i}=z\}}
		k_{i} (x^{*})^{v_{\cdot i}}- \sum_{\{i|v'_{\cdot i}=z\}}
		k_{i}(x^*)^{v_{\cdot i}} \right)
		=0.
		\end{eqnarray*}
		Note that {the above equations hold} independent of the choice of
		$\mathcal{C}_{\bar{x}}$, which completes the proof.
	\end{proof}
	
	{It is well-known that} the {pseudo-Helmholtz free energy function} is a Lyapunov
	function for a complex balanced MAS {and succeeds in analyzing the system's asymptotic stability}
	\cite{Horn1972}. This stability result can be also reached through the method of the Lyapunov Function PDEs.
	
	\begin{theorem}{\label{thm stability of complex balanced CRN}}
		For a MAS {$(\mathcal{S},\mathcal{C},\mathcal{R},\mathcal{K})$} possessing a
		complex balanced equilibrium $x^*\in\mathbb{R}^n_{\textgreater 0}$,
		the Lyapunov function PDEs (\ref{LyaPDEs}) plus
		(\ref{BoundaryCondition}) have a solution \eqref{Gibbs} {that can serve as a} Lyapunov
		function to suggest this system to be locally asymptotically stable at $x^*$ with respect to all initial conditions in $\big(x^{*}+\mathscr{S}\big)\bigcap\mathbb{R}^{n}_{\textgreater 0}$ near $x^*$.
	\end{theorem}
	
	\begin{proof}
		As proved in \textit{Theorem \ref{thm solution of complex balanced crn}}, the {pseudo-Helmholtz free energy function} $G(x)$
		defined by (\ref{Gibbs}) is a solution of the Lyapunov function PDEs
		(\ref{LyaPDEs}) plus (\ref{BoundaryCondition}).
		Obviously, $G(x)$ is twice differentiable, and its Hessian matrix is calculated as
		\begin{equation*}
		\nabla^{2} G(x)
		=\left(
		\begin{array}{ccc}
		\frac{1}{x_{1}}&&\\
		&\ddots&\\
		&& \frac{1}{x_{n}}
		\end{array}
		\right).
		\end{equation*}
		Clearly, $\forall~x\in\big(x^{*}+\mathscr{S}\big)\bigcap\mathbb{R}^{n}_{\textgreater 0}$, $\nabla^{2} G(x)$ is positive definite. This means that (\ref{eq. strictly convex condition}) is true. Further based on \textit{Theorem \ref{thm lya function}}, the result is straightforward.
	\end{proof}
	
	The above two theorems reveal that the Lyapunov Function PDEs method can {produce Lyapunov functions \mbox{\eqref{Gibbs}} for complex balanced MASs and serve for the stability analysis of these systems very well}.
	In this case, the Lyapunov Function PDE \mbox{(\ref{LyaPDEs1})} becomes
\begin{equation}{\label{complexLyaPDEs}}
\sum_{z\in\mathcal{C}}\text{e}^{z^\top\nabla G(x)}\left(\sum_{\{i|v_{\cdot i}=z\}} k_{i}x^{v_{\cdot i}} \text{e}^{-v_{\cdot i}^{\top}\nabla
	G(x)}-\sum_{\{i|v'_{\cdot i}=z\}} k_{i}x^{v_{\cdot i}} \text{e}^{-v_{\cdot i}^{\top}\nabla
	G(x)}\right) =0,~~~x\in\mathbb{R}^n_{\textgreater 0}.
\end{equation}
{Further by combining \mbox{(\ref{ComplexBalanecedCondition})}, we get $k_{i}x^{v_{\cdot i}}=k_{i}(x^*)^{v_{\cdot i}}\exp\left\{v_{\cdot i}^{\top}\nabla G(x)\right\}$. This relational expression can be also found in Gorboan's work\mbox{\cite{Gorban2014}}, which connects the reaction rate at any concentration with that at the equilibrium concentration through the entropy-like function $G(x)$. When $\nabla G(x)=\mathbbold{0}_n$, every complex will reach reaction balance. At this point, $\nabla G(x)$ plays a role on driving the reaction to occur towards equilibrium for every complex.}

	\section{Lyapunov Function PDEs for CRNS with $\text{dim}\mathscr{S}=1$}
	The Lyapunov function PDEs are {studied} %extended to serve
	for CRNs with one
	dimensional stoichiometric subspace in this section.
	
	\begin{proposition} For a MAS
		{$(\mathcal{S},\mathcal{C},\mathcal{R},\mathcal{K})$} with
		$\text{dim}\mathscr{S}=1$, the Lyapunov Function PDEs are
		\begin{equation}\label{LyaS1}
		(u-1)
		\left[\sum_{\{i|m_{i}>0\}}\left(k_{i}x^{v_{\cdot
				i}}\right)\left(\sum_{j=0}^{m_{i}-1}u^{j}\right)
		+\sum_{\{i|m_{i}<0\}}\left(k_{i}x^{v_{\cdot
				i}}\right)\left(-\sum_{j=m_{i}}^{-1}u^{j}\right)\right]=0
		\end{equation}
		plus the boundary condition
		\begin{equation}{\label{BoundaryConditionS1}}
		\lim_{
			\begin{tiny}
			\begin{array}{c}
			x \to \bar{x}\\
			x\in (\bar{x}+\mathscr{S})\cap\mathbb{R}^{n}_{\textgreater 0}
			\end{array}
			\end{tiny}
		}\sum_{\{i| v_{\cdot i}\in\mathcal{C}_{\bar{x}}\}} k_{i}x^{v_{\cdot
				i}} -\sum_{\{i| v'_{\cdot i}\in \mathcal{C}_{\bar{x}}\}}
		k_{i}x^{v_{\cdot i}}u^{m_i}=0,
		\end{equation}
		where
		{$u=\exp\{w^{\top}\nabla f\}$, {$w\in\mathbb{R}^n\backslash\{\mathbbold{0}_{n}\}$ represents a set of bases of $\mathscr{S}$} and $m_i\in\mathbb{Z}\backslash\{0\},~i=1,\cdots,r$, satisfy
			\begin{equation}\label{LinearExpression}
			v'_{\cdot i}-v_{\cdot
				i}=m_{i}w.
			\end{equation}
		}
	\end{proposition}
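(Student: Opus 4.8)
The plan is to reduce the general Lyapunov Function PDEs \eqref{LyaPDEs} and \eqref{BoundaryCondition} to the stated one–dimensional forms by exploiting that, when $\dim\mathscr{S}=1$, every reaction vector is a scalar multiple of a single generator of $\mathscr{S}$, and then to carry out an elementary factorization of the resulting one–variable expression in $u$.

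First I would fix the generator and the integers $m_i$. Since $\dim\mathscr{S}=1$, the subspace $\mathscr{S}$ is a line, and every reaction vector $v'_{\cdot i}-v_{\cdot i}$ lies on it and is nonzero (self-loops $v_{\cdot i}\to v_{\cdot i}$ are excluded in the definition of a CRN). As these vectors are integer vectors, they generate a rank-one sublattice of $\mathbb{Z}^{n}$; choosing $w\in\mathbb{R}^{n}\backslash\{\mathbbold{0}_{n}\}$ to be a primitive integer generator of that lattice, each reaction vector is an integer multiple of $w$, that is $v'_{\cdot i}-v_{\cdot i}=m_{i}w$ with $m_{i}\in\mathbb{Z}\backslash\{0\}$, which is precisely \eqref{LinearExpression}.

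Next I would substitute this decomposition into the exponentials appearing in the PDE. Setting $u:=\exp\{w^{\top}\nabla f\}$, I get $\exp\{(v'_{\cdot i}-v_{\cdot i})^{\top}\nabla f\}=\exp\{m_{i}\,w^{\top}\nabla f\}=u^{m_{i}}$, so \eqref{LyaPDEs} collapses to the single-variable identity $\sum_{i=1}^{r}k_{i}x^{v_{\cdot i}}\bigl(1-u^{m_{i}}\bigr)=0$. The same replacement applied termwise to \eqref{BoundaryCondition} yields \eqref{BoundaryConditionS1} verbatim, so the boundary condition requires no further manipulation.

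The remaining, and only mildly delicate, step is the algebraic factorization turning $\sum_{i}k_{i}x^{v_{\cdot i}}(1-u^{m_{i}})$ into the form in \eqref{LyaS1}. I would split the sum according to the sign of $m_{i}$ and use the finite geometric identity $u^{m}-1=(u-1)\sum_{j=0}^{m-1}u^{j}$ for $m>0$, together with the telescoping identity $(u-1)\sum_{j=m}^{-1}u^{j}=u^{0}-u^{m}=1-u^{m}$ for $m<0$. The latter is the point to get right, since it involves negative powers of $u$; once established it gives $1-u^{m_{i}}=-(u-1)\sum_{j=0}^{m_{i}-1}u^{j}$ when $m_{i}>0$ and $1-u^{m_{i}}=(u-1)\sum_{j=m_{i}}^{-1}u^{j}$ when $m_{i}<0$. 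Factoring the common $(u-1)$ out of the whole sum then produces exactly the bracketed expression of \eqref{LyaS1} (up to the irrelevant overall sign coming from $1-u^{m_i}=-(u^{m_i}-1)$, which does not affect an equation set to zero), completing the reduction.
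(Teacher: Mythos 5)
Your proposal is correct and follows essentially the same route as the paper: express each reaction vector as $m_i w$ using a generator of the one-dimensional $\mathscr{S}$, substitute $u=\exp\{w^{\top}\nabla f\}$ into \eqref{LyaPDEs} and \eqref{BoundaryCondition}, and factor $(u-1)$ out via the finite geometric identities for positive and negative $m_i$. You merely make explicit two points the paper leaves implicit (the choice of a primitive integer generator to guarantee $m_i\in\mathbb{Z}\backslash\{0\}$, and the telescoping computation for $m_i<0$), and your handling of the overall sign is correct since the expression is set to zero.
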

	\begin{proof}
		When $\text{dim}\mathscr{S}=1$, any element among $\{v'_{\cdot
			1}-v_{\cdot 1},\cdots,v'_{\cdot r}-v_{\cdot r}\}$ can be used to
		express linearly the remaining $r-1$ ones. Therefore, there must
		exist a $w\in\mathbb{R}^n\backslash\{\mathbbold{0}_{n}\}$ acting as
		a set of bases of $\mathscr{S}$ such that
		{
			\begin{equation*}
			v'_{\cdot i}-v_{\cdot
				i}=m_{i}w,~\forall~i=1,\cdots,r,~m_i\in\mathbb{Z}\backslash\{0\}.
			\end{equation*}
		}
		In this case, the PDE of (\ref{LyaPDEs}) {becomes}
		$$\sum_{i=1}^{r} k_{i}x^{v_{\cdot i}}\left(1-e^{m_i(w^{\top}\nabla f(x))}\right)=0,$$
		i.e.,
		$$\sum_{\{i|m_{i}>0\}} k_{i}x^{v_{\cdot i}}\left(1-e^{m_i(w^{\top}\nabla f(x))}\right)+\sum_{\{i|m_{i}<0\}}k_{i}x^{v_{\cdot i}}\left(1-e^{m_i(w^{\top}\nabla f(x))}\right)=0.$$
		By setting $u=e^{w^{\top}\nabla f(x)}$, we get the Lyapunov function
		PDEs (\ref{LyaS1}) plus (\ref{BoundaryConditionS1}) for CRNs with
		$\text{dim}\mathscr{S}=1$.
	\end{proof}
	
	\begin{corollary}
		For any constant $\mathfrak{c}$, $f(x)=\mathfrak{c}$ is a solution
		of the PDE (\ref{LyaS1}).
	\end{corollary}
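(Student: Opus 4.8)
The plan is to exploit the fact that the PDE (\ref{LyaS1}) has already been written in fully factored form, with the common factor $(u-1)$ extracted to the front. For a constant candidate function the entire content of the argument is then the single observation that this prefactor vanishes, so no genuine computation of the bracketed sums is required.

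First I would substitute $f(x)=\mathfrak{c}$ into the defining relation $u=\exp\{w^{\top}\nabla f\}$ introduced in the preceding proposition. Since the gradient of a constant function is $\nabla f(x)=\mathbbold{0}_{n}$ for every $x$, we obtain $w^{\top}\nabla f=0$ and hence $u=\exp\{0\}=1$, uniformly in $x\in\mathbb{R}^{n}_{\textgreater 0}$ and independently of the basis vector $w$ of $\mathscr{S}$. Next I would note that, because each $m_{i}$ is a nonzero integer, the two inner sums $\sum_{j=0}^{m_{i}-1}u^{j}$ and $-\sum_{j=m_{i}}^{-1}u^{j}$ are finite sums of integer powers of $u$; together with the coefficients $k_{i}x^{v_{\cdot i}}$ they constitute a well-defined bracketed expression for every $x$. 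The left-hand side of (\ref{LyaS1}) is exactly $(u-1)$ times this bracket, and since $u-1=0$ the product is $0$ regardless of the value the bracket takes. This establishes that $f(x)=\mathfrak{c}$ satisfies (\ref{LyaS1}) and completes the argument.

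There is essentially no technical obstacle here: the statement is an immediate consequence of the factorization already performed in the proposition, and the whole proof reduces to evaluating $u$ at a constant. The one point worth flagging is that the corollary deliberately asserts only that a constant solves the PDE (\ref{LyaS1}), and not the full Lyapunov Function PDEs. Indeed, a constant function fails to be positive definite, and when $u=1$ the boundary condition (\ref{BoundaryConditionS1}) collapses to the generally nonzero residual $\sum_{\{i|v_{\cdot i}\in\mathcal{C}_{\bar{x}}\}}k_{i}x^{v_{\cdot i}}-\sum_{\{i|v'_{\cdot i}\in\mathcal{C}_{\bar{x}}\}}k_{i}x^{v_{\cdot i}}$, which need not vanish. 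The constant should therefore be read as a degenerate solution reflecting the homogeneity of the PDE in $\nabla f$ rather than as a bona fide Lyapunov function.
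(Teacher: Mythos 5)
Your argument is correct and coincides with the paper's own (one-line) proof: both reduce the claim to the observation that a constant $f$ forces $u=\exp\{w^{\top}\nabla f\}=1$, which annihilates the prefactor $(u-1)$ in (\ref{LyaS1}). Your closing caveat about the boundary condition matches what the paper records separately in the remark following the corollary.
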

	\begin{proof}
		The result is immediate since $u=1$ is a solution of the PDE.
	\end{proof}
	
	\begin{remark}
		If $f(x)=\mathfrak{c}$ satisfies the boundary condition of
		(\ref{BoundaryConditionS1}), then
		$$\lim_{
			\begin{tiny}
			\begin{array}{c}
			x \to \bar{x}\\
			x\in (x(0)+\mathscr{S})\cap\mathbb{R}^{n}_{\textgreater 0}
			\end{array}
			\end{tiny}
		}\sum_{\{i| v_{\cdot i}\in\mathcal{C}_{\bar{x}}\}} k_{i}x^{v_{\cdot
				i}} -\sum_{\{i| v'_{\cdot i}\in \mathcal{C}_{\bar{x}}\}}
		k_{i}x^{v_{\cdot i}}=0$$
		%This requests the underlying CRN to be more special but not only limited to admitting one dimensional stoichiometric subspace.
		{This condition is very restrictive that can be hardly reached even in the case of one dimensional stoichiometric subspace.}
		Therefore, the constant solution $f(x)=\mathfrak{c}$ is
		usually not qualified to follow the boundary condition
		(\ref{BoundaryConditionS1}) for a general CRN with
		$\text{dim}\mathscr{S}=1$.
	\end{remark}
	
	The above reason motivates us to {consider} {the solution}
	that makes the second term of the \text{L.H.S} of (\ref{LyaS1}) vanish,
	i.e.,
	$$\sum_{\{i|m_{i}>0\}}\left(k_{i}x^{v_{\cdot
			i}}\right)\left(\sum_{j=0}^{m_{i}-1}u^{j}\right)
	+\sum_{\{i|m_{i}<0\}}\left(k_{i}x^{v_{\cdot
			i}}\right)\left(-\sum_{j=m_{i}}^{-1}u^{j}\right)=0.$$
	
	\begin{proposition}{\label{uExistence}}
		For a MAS {$(\mathcal{S},\mathcal{C},\mathcal{R},\mathcal{K})$} with
		$\text{dim}\mathscr{S}=1$, let a scalar function $g(x,u)$ defined on
		$\mathbb{R}^{n}_{\textgreater 0} \times \mathbb{R}_{\textgreater 0}$
		be
		\begin{equation}{\label{gDefinition}}
		g(x,u)=\sum_{\{i|m_{i}>0\}}\left(k_{i}x^{v_{\cdot
				i}}\right)\left(\sum_{j=0}^{m_{i}-1}u^{j}\right)+\sum_{\{i|m_{i}<0\}}\left(k_{i}x^{v_{\cdot
				i}}\right)\left(-\sum_{j=m_{i}}^{-1}u^{j}\right).
		\end{equation}
		If the MAS admits a positive steady state
		$x^{*}\in\mathbb{R}^{n}_{\textgreater 0}$, then there exists a
		unique {$\tilde{u}\in\mathscr{C}^2$} such that
		$g(x,\tilde{u}(x))=0$.
	\end{proposition}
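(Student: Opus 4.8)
The plan is to treat $g(x,\cdot)$, for each fixed $x\in\mathbb{R}^{n}_{>0}$, as a scalar function of the single variable $u$ and to obtain $\tilde u(x)$ as its unique positive zero, then to upgrade this pointwise existence to a $\mathscr{C}^1$ dependence on $x$ via the implicit function theorem. The whole argument hinges on extracting the correct sign structure from the hypothesis, so I would begin there.

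Since the stoichiometric subspace is one-dimensional with basis $w\neq\mathbbold{0}_n$ and $v'_{\cdot i}-v_{\cdot i}=m_iw$ by (\ref{LinearExpression}), the drift in (\ref{DeterministicModel}) reads $\dot x=w\sum_{i=1}^{r}m_ik_ix^{v_{\cdot i}}$. A positive steady state $x^{*}$ therefore forces $\sum_{i=1}^{r}m_ik_i(x^{*})^{v_{\cdot i}}=0$. Because every $k_i(x^{*})^{v_{\cdot i}}>0$ and every $m_i\in\mathbb{Z}\setminus\{0\}$, this equality is impossible unless both index sets $\{i\mid m_i>0\}$ and $\{i\mid m_i<0\}$ are nonempty. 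This is the single place where the steady-state hypothesis is used, and it is the \emph{conceptual linchpin} of the proof: it is exactly what supplies the monotonicity and the opposite-sign boundary behavior below.

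Next I would analyze $g(x,\cdot)$ on $(0,\infty)$. Each coefficient $k_ix^{v_{\cdot i}}$ is a positive constant. Differentiating (\ref{gDefinition}) in $u$, the $m_i>0$ summands contribute $\sum_{j=1}^{m_i-1}ju^{j-1}\geq 0$, while the $m_i<0$ summands contribute $-\sum_{j=m_i}^{-1}ju^{j-1}$, every term of which is strictly positive since $j<0$ makes $-ju^{j-1}>0$. As $\{i\mid m_i<0\}\neq\emptyset$, this gives $\partial g/\partial u>0$ throughout $(0,\infty)$, so $g(x,\cdot)$ is strictly increasing. For the endpoints, as $u\to 0^{+}$ the factors $u^{j}$ with $j<0$ in the nonempty negative block blow up and, carrying their minus sign, drive $g\to-\infty$; as $u\to\infty$ the negative block vanishes while the positive block stays bounded below by the positive constant $\sum_{\{i\mid m_i>0\}}k_ix^{v_{\cdot i}}$, so $g$ becomes and remains positive. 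By the intermediate value theorem together with strict monotonicity, there is exactly one $\tilde u(x)\in(0,\infty)$ with $g(x,\tilde u(x))=0$.

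Finally I would recover the $\mathscr{C}^1$ regularity. The map $(x,u)\mapsto g(x,u)$ is smooth on $\mathbb{R}^{n}_{>0}\times\mathbb{R}_{>0}$, being a finite combination of monomials in $x$ and Laurent monomials in $u$, and at the zero $(x,\tilde u(x))$ we have just shown $\partial g/\partial u>0\neq 0$. The implicit function theorem then furnishes a $\mathscr{C}^1$ (in fact $\mathscr{C}^\infty$) local solution $u=\tilde u(x)$ near each point, and since the zero is unique for every fixed $x$ these local branches patch together into the single globally defined function $\tilde u$, which is therefore $\mathscr{C}^1$ on all of $\mathbb{R}^{n}_{>0}$. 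I expect no genuine obstacle beyond routine bookkeeping of the power sums; the only substantive step is the deduction that both sign classes of $m_i$ are occupied, without which neither the strict monotonicity nor the crossing of zero would hold.
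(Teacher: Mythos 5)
Your proof is correct and follows essentially the same route as the paper's: derive nonemptiness of both sign classes $\{i\mid m_i>0\}$ and $\{i\mid m_i<0\}$ from the positive steady state, establish strict monotonicity of $g(x,\cdot)$ in $u$, use the boundary behavior at $u\to 0^{+}$ and $u\to\infty$ with the intermediate value theorem for existence and uniqueness, and invoke the implicit function theorem for $\mathscr{C}^1$ regularity. Your handling of the $u\to\infty$ limit (positivity via the lower bound $\sum_{\{i\mid m_i>0\}}k_ix^{v_{\cdot i}}$ rather than the paper's claimed divergence to $+\infty$, which fails when all positive $m_i$ equal $1$) is in fact slightly more careful than the paper's, but this is a refinement of the same argument, not a different one.
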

	\begin{proof}
		Since the MAS admits a positive steady state $x^{*}$, its dynamics
		satisfies
		\begin{eqnarray*}
			\dot{x}|_{x=x^{*}}&=&\sum_{i=1}^{r} k_{i} (x^{*})^{v_{\cdot i}}
			(v'_{\cdot i}-v_{\cdot i})\\
			&=&\sum_{\{i|m_{i}>0\}} k_{i}
			(x^{*})^{v_{\cdot i}} (m_{i}w)-\sum_{\{i|m_{i}<0\}} k_{i}
			(x^{*})^{v_{\cdot i}} (|m_{i}|w)=0,
		\end{eqnarray*}
		which indicates that neither $\{i|m_{i}>0\}$ nor $\{i|m_{i}<0\}$ is
		an empty set. Combing this fact and the definition
		(\ref{gDefinition}) of $g(x,u)$ yields that $g(x,u)$ is continuous
		in $\mathbb{R}^{n}_{\textgreater 0} \times \mathbb{R}_{\textgreater
			0}$, and moreover for $\forall~x\in\mathbb{R}^{n}_{>0}$, $g(x,u)$ is
		continuous differentiable about $u$ with
		\begin{equation*}
		\frac{\partial}{\partial u}g(x,u)=
		\sum_{\{i|m_{i}>0\}}\left(k_{i}x^{v_{\cdot
				i}}\right)\left(\sum_{j=1}^{m_{i}-1}j
		u^{j-1}\right)+\sum_{\{i|m_{i}<0\}}\left(k_{i}x^{v_{\cdot
				i}}\right)\left(\sum_{j=m_{i}}^{-1}(-j)u^{j-1}\right)>0.
		\end{equation*}
		Hence, $g(x,u)$ is monotone increasing over $u$. Also, note the
		facts that $$\lim_{u\to0} g(x,u)=-\infty~
		\text{and}~\lim_{u\to+\infty} g(x,u)=+\infty,$$ then based on the
		intermediate value theorem there exists a unique
		$\tilde{u}(x)\in\mathbb{R}_{\textgreater 0}$ such that
		$g(x,\tilde{u}(x))=0$.
		In addition, $g(x,u)$ is also continuous
		differentiable about $x$ and $g_u(x,u)|_{u=\tilde{u}}\neq 0$, so
		we have {$\tilde{u}\in\mathscr{C}^1(\mathbb{R}^n_{\textgreater
			0};\mathbb{R}_{\textgreater 0})$} according to the implicit function
		theorem and $\nabla\tilde{u}(x)=-{g_{x}(x,\tilde{u}(x))}/{g_{u}(x,\tilde{u}(x))}$.
		Moreover, since the functions $g_{x}$ and $g_{u}$ are also continuous differentiable with respect to both parameters, the function $\nabla\tilde{u}(x)$ is also continuous differentiable and therefore {$\tilde{u}\in\mathscr{C}^{2}(\mathbb{R}^n_{\textgreater
			0};\mathbb{R}_{\textgreater 0})$}, which completes the proof.
	\end{proof}
	
	Based on the function $\tilde{u}(x)$, we could find a solution for
	the Lyapunov function PDEs (\ref{LyaS1}) plus
	(\ref{BoundaryConditionS1}) derived from a MAS with
	$\text{dim}\mathscr{S}=1$ and a positive equilibrium. For this
	purpose, we begin with the following two lemmas.
	
	\begin{lemma}{\label{ZxPwNw}}
		For a MAS {$(\mathcal{S},\mathcal{C},\mathcal{R},\mathcal{K})$} with
		$\text{dim}\mathscr{S}=1$, let
		$w\in\mathbb{R}^n\backslash\{\mathbbold{0}_{n}\}$ be a set of bases
		of $\mathscr{S}$, and $\bar{x}\in\mathbb{R}^n_{\geq 0}$ represent
		any boundary point of any positive stoichiometric compatibility
		class induced by $\mathscr{S}$. Denote the index sets of positive
		and negative entries of $w$ by $P_{w}$ and $N_w$, respectively, and
		the index set of zero entries of $\bar{x}$ by $Z_{\bar{x}}$, then
		for $\forall~\bar{x}$,
		$$Z_{\bar{x}}\subseteq P_{w}~~ \mathrm{or}~~ Z_{\bar{x}}\subseteq
		N_{w}.$$
	\end{lemma}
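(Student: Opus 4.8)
The plan is to exploit the one-dimensional structure directly. Since $\dim\mathscr{S}=1$ and $w$ is a basis of $\mathscr{S}$, every nonnegative stoichiometric compatibility class is a line segment; fixing a point $C$ in the class I would parametrize it as $x(t)=C+tw$ and determine for which $t$ the constraint $x(t)\in\mathbb{R}^{n}_{\geq 0}$ holds. Coordinate by coordinate, $x_{j}(t)=C_{j}+tw_{j}$, so a positive entry $w_{j}$ (i.e. $j\in P_{w}$) yields a lower bound $t\geq -C_{j}/w_{j}$, a negative entry ($j\in N_{w}$) yields an upper bound $t\leq -C_{j}/w_{j}$, and a zero entry leaves $x_{j}(t)=C_{j}$ constant, which must be strictly positive because the positive compatibility class is nonempty. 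Hence the admissible set of $t$ is a closed interval $[t_{\min},t_{\max}]$ with $t_{\min}=\max_{j\in P_{w}}(-C_{j}/w_{j})$ and $t_{\max}=\min_{j\in N_{w}}(-C_{j}/w_{j})$ (using $t_{\min}=-\infty$ if $P_{w}=\emptyset$ and $t_{\max}=+\infty$ if $N_{w}=\emptyset$), and nonemptiness of the positive class forces the strict inequality $t_{\min}<t_{\max}$.

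Next I would localize the boundary points. For any $t$ in the open interval $(t_{\min},t_{\max})$ every coordinate is strictly positive, so $x(t)\in\mathbb{R}^{n}_{>0}$ is not a boundary point; therefore the only candidates are the two endpoints $x(t_{\min})$ and $x(t_{\max})$ (when finite). This reduces the lemma to inspecting which coordinates vanish at each endpoint.

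Finally I would read off the zero sets. At $t=t_{\min}$ the vanishing coordinates are exactly the indices attaining the maximum that defines $t_{\min}$, all of which lie in $P_{w}$; meanwhile every $j\in N_{w}$ satisfies $t_{\min}<t_{\max}\leq -C_{j}/w_{j}$, whence $x_{j}(t_{\min})>0$, and every zero-entry coordinate equals $C_{j}>0$. Thus $Z_{\bar{x}}\subseteq P_{w}$ for $\bar{x}=x(t_{\min})$. The symmetric argument at $t=t_{\max}$ gives $Z_{\bar{x}}\subseteq N_{w}$, which establishes the claimed dichotomy for every boundary point.

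The main obstacle is not a hard estimate but making the endpoint analysis airtight: one must invoke the strict inequality $t_{\min}<t_{\max}$ (guaranteed by nonemptiness of the positive compatibility class) to rule out a coordinate in $N_{w}$ vanishing at the left endpoint, and one must handle the degenerate cases where $P_{w}$ or $N_{w}$ is empty, in which one endpoint (hence one boundary point) is absent and the corresponding inclusion holds vacuously.
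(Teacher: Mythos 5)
Your proof is correct and rests on the same mechanism as the paper's: a boundary point $\bar{x}$ can be pushed into $\mathbb{R}^n_{>0}$ by adding a nonzero multiple $\alpha w$, and the sign of $\alpha$ forces every index in $Z_{\bar{x}}$ into $P_w$ (if $\alpha>0$) or $N_w$ (if $\alpha<0$). The paper states this in three lines, whereas you additionally work out the full interval parametrization $[t_{\min},t_{\max}]$ and classify all boundary points as endpoints; that extra structure is sound but not needed for the lemma.
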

	
	\begin{proof}
		Since $\bar{x}$ is a boundary point of a positive stoichiometric
		compatibility class induced by $\mathscr{S}$, so there exists a
		nonzero constant $\alpha\in\mathbb{R}$ such that
		\begin{equation*}
		\bar{x}+\alpha w \in \mathbb{R}^{n}_{\textgreater 0}.
		\end{equation*}
		If $\alpha\textgreater 0$, then for $\forall~ i\in Z_{\bar{x}}$
		($Z_{\bar{x}}$ is obviously non-empty) we have
		\begin{equation*}
		\bar{x}_{i}+\alpha w_{i}\textgreater 0 \Rightarrow w_{i}\textgreater
		0 \Rightarrow i\in P_{w}.
		\end{equation*}
		Therefore, $Z_{\bar{x}}\subseteq P_{w}$. Similarly, if
		$\alpha\textless 0$ then we {get} $Z_{\bar{x}}\subseteq N_{w}$.
	\end{proof}
	
	\begin{lemma}{\label{CoordinateTransformation}}
		For a MAS {$(\mathcal{S},\mathcal{C},\mathcal{R},\mathcal{K})$} with
		$\text{dim}\mathscr{S}=1$, a function $J(y)$ from positive
		stoichiometric compatibility class
		$(x+\mathscr{S})\bigcap\mathbb{R}^{n}_{\textgreater 0}$ to
		$\mathbb{R}$ is defined as follows
		\begin{equation}{\label{J(y)}} J(y)=\left\{
		\begin{array}{ll}
		\prod_{i\in P_{w}} y_{i} - \prod_{i\in N_{w}} y_{i}, &~ P_{w},N_{w}\neq \emptyset,\\
		\prod_{i\in P_{w}} y_{i} - 1,  &~ P_{w} \neq \emptyset, N_{w} = \emptyset,\\
		\prod_{i\in N_{w}} y_{i} - 1,  & ~N_{w} \neq \emptyset, P_{w} = \emptyset,\\
		\end{array}
		\right.
		\end{equation}
		where $x\in\mathbb{R}_{\textgreater 0}^n$ represents any state of
		the MAS. Then, this function $J(y)$ admits a unique zero point in
		every $(x+\mathscr{S})\bigcap\mathbb{R}^{n}_{\textgreater 0}$, and
		moreover, the unique zero point is a twice continuous differential
		function with respect to $x$, denoted by
		{$y^{\dag}\in\mathscr{C}^2(\mathbb{R}^n_{\textgreater
			0};\mathbb{R}_{\textgreater 0}^n)$}. In addition, there also exists
		another twice continuous differential function
		{$\gamma\in\mathscr{C}^2(\mathbb{R}^n_{\textgreater
			0};\mathbb{R})$}, which together with $y^{\dag}(x)$ satisfies
		$$x=y^{\dag}(x)+\gamma(x)w~~\text{and}~~\gamma(x+\delta w)=\gamma(x)+\delta,~\forall~\delta\in\mathbb{R},$$
		where $w$ is a set of bases of $\mathscr{S}$.
	\end{lemma}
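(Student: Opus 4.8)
The plan is to collapse the whole statement to a one–dimensional monotonicity problem along the single spanning direction $w$. For a fixed $x\in\mathbb{R}^n_{>0}$, the positive class $(x+\mathscr{S})\cap\mathbb{R}^n_{>0}$ is the image of an open interval $I_x=(t_-,t_+)$ under the map $t\mapsto x+tw$ (an interval because it is the intersection of a line with the convex open orthant). I would then study $\phi_x(t)=J(x+tw)$ on $I_x$. Since $x_i+tw_i$ is strictly increasing in $t$ for $i\in P_w$ and strictly decreasing for $i\in N_w$, the factor $\prod_{i\in P_w}(x_i+tw_i)$ is increasing and $\prod_{i\in N_w}(x_i+tw_i)$ is decreasing on $I_x$. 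Hence in every case of (\ref{J(y)}) the function $\phi_x$ is strictly monotone; I would record this by checking that $\phi_x'(t)$ is of one sign throughout, which simultaneously delivers the nondegeneracy needed for the regularity step.

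First I would prove existence and uniqueness of the zero of $\phi_x$ via the intermediate value theorem, the crux being that $\phi_x$ takes opposite signs at the two endpoints of $I_x$. This is exactly where Lemma \ref{ZxPwNw} enters: at the upper boundary point $x+t_+w$ the vanishing coordinates all lie in $N_w$ (raising $t$ drives precisely the $N_w$–coordinates to zero), so $\prod_{i\in N_w}(x_i+tw_i)\to 0$ while $\prod_{i\in P_w}$ stays bounded away from $0$; symmetrically, at the lower endpoint the vanishing coordinates lie in $P_w$. In the representative case $P_w,N_w\neq\emptyset$ this forces $\lim_{t\to t_+^-}\phi_x(t)>0$ and $\lim_{t\to t_-^+}\phi_x(t)<0$; in the degenerate cases $N_w=\emptyset$ or $P_w=\emptyset$ one endpoint of $I_x$ is infinite and the constant $1$ in (\ref{J(y)}) plays the role of the absent product, again producing a sign change across $I_x$. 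Strict monotonicity together with this sign change yields a unique root $t^{\dag}(x)\in I_x$, and I would set $y^{\dag}(x)=x+t^{\dag}(x)w$.

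Next I would upgrade to $\mathscr{C}^1$ regularity by applying the implicit function theorem to $F(x,t)=J(x+tw)$, which is smooth on the open set $\{(x,t):x+tw\in\mathbb{R}^n_{>0}\}$. Because $\partial F/\partial t=\phi_x'(t)\neq 0$ at the root, the root $t^{\dag}(x)$ is locally a $\mathscr{C}^1$ function of $x$, and global uniqueness lets these local branches patch into a single $\mathscr{C}^1$ map on $\mathbb{R}^n_{>0}$; consequently $y^{\dag}(x)=x+t^{\dag}(x)w\in\mathscr{C}^1(\mathbb{R}^n_{>0};\mathbb{R}^n_{>0})$. Finally I would define $\gamma(x)=-t^{\dag}(x)\in\mathscr{C}^1(\mathbb{R}^n_{>0};\mathbb{R})$, so that $x=y^{\dag}(x)+\gamma(x)w$ holds by construction. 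The shift identity follows from uniqueness: since $x$ and $x+\delta w$ lie in the same class, $y^{\dag}(x+\delta w)=y^{\dag}(x)$, and subtracting the two defining relations and cancelling the nonzero vector $w$ gives $\gamma(x+\delta w)=\gamma(x)+\delta$.

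The main obstacle I anticipate is precisely the endpoint and sign analysis: it demands the case split dictated by (\ref{J(y)}), a careful treatment of finite versus infinite endpoints, and the accurate use of Lemma \ref{ZxPwNw} to ensure that at each endpoint exactly one of the two products degenerates, so that the one-sided limits of $\phi_x$ have definite and opposite signs. Once this is settled, strict monotonicity, the intermediate value theorem, and the implicit function theorem render the rest routine.
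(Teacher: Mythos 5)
Your proposal is correct and follows essentially the same route as the paper's proof: parametrize the positive stoichiometric compatibility class by a scalar along $w$, use Lemma \ref{ZxPwNw} to determine that the coordinates vanishing at each endpoint lie entirely in $P_w$ or entirely in $N_w$ (so exactly one product degenerates and the one-sided limits of $J$ have opposite signs), conclude existence and uniqueness of the root from strict monotonicity plus the intermediate value theorem, obtain $\mathscr{C}^1$ regularity from the implicit function theorem applied to $J(x+tw)$ with nonvanishing $t$-derivative, and derive the shift identity for $\gamma$ from the uniqueness of $y^{\dag}$ within a class. The only differences are cosmetic (you anchor the parametrization at the interior point $x$ rather than at a boundary point, with the corresponding sign convention $\gamma(x)=-t^{\dag}(x)$).
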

	
	\begin{proof}
		{Clearly, for
			$\forall~y\in(x+\mathscr{S})\bigcap\mathbb{R}^{n}_{>0}$ there exists
			a boundary point $\bar{x}$ and $\beta\in\mathbb{R},~\beta\neq 0$
			such that $y=\bar{x}+\beta w$. We conduct the proof according to
			three different cases below:}
		
		1) $P_{w}\neq\emptyset$ and $N_{w}\neq\emptyset$. In every
		$(x+\mathscr{S})\bigcap\mathbb{R}^{n}_{\textgreater 0}$, there may
		exist two distinct boundary points, denoted by $\bar{x}$ and
		$\bar{x}'$, and moreover, they could reach each other through
		$\bar{x}'=\bar{x}+\beta_Mw$, where $\beta_M\in\mathbb{R}$ but
		$\beta_M\neq 0$. According to \textit{Lemma \ref{ZxPwNw}}, for $\bar{x}$
		either $Z_{\bar{x}} \subseteq P_{w}$ or $Z_{\bar{x}} \subseteq
		N_{w}$ is true. If $Z_{\bar{x}} \subseteq P_{w}$, then we have
		$\beta_M\textgreater 0$ and $\beta\in(0,\beta_M)$. Further, we have
		$Z_{\bar{x}'} \subset N_{w}$. Following these results, we get
		\begin{equation*}
		\lim_{\beta\to 0}J(\bar{x}+\beta w)\textless 0~~~\text{and}~~~
		\lim_{\beta\to \beta_{M}}J(\bar{x}+\beta w)\textgreater 0.
		\end{equation*}
		By {the intermediate value theorem}, there exist a point
		$y^\dag\in(x+\mathscr{S})\bigcap\mathbb{R}^{n}_{\textgreater 0}$
		rendering $J(y^\dag)=0$. Also, we note that $\frac{d}{d\beta}
		J(\bar{x}+\beta w)=\left(\frac{\partial J}{\partial
			y}\right)^\top\frac{\partial y}{\partial \beta}\textgreater 0$, the
		zero point $y^\dag$ is unique. Similarly, if $Z_{\bar{x}} \subseteq
		N_{w}$, the result is true too.
		
		2) $P_{w} \neq \emptyset$ and $N_{w} = \emptyset$. In this case
		$Z_{\bar{x}} \subseteq P_{w}$ and $\alpha \in (0,+\infty)$. Thus, we
		have
		\begin{equation*}
		\lim_{\beta\to 0}J(\bar{x}+\beta w)=-1,~~~ \lim_{\beta\to
			+\infty}J(\bar{x}+\beta w)=+\infty~~~\text{and}~~~ \frac{d}{d \beta}
		J(\bar{x}+\beta w)\textgreater 0.
		\end{equation*}
		According to {the intermediate value theorem} and strict monotonicity,
		$J(y)$ admits a unique zero point in
		$(x+\mathscr{S})\bigcap\mathbb{R}^{n}_{\textgreater 0}$.
		
		3) $N_{w} \neq \emptyset$ and $P_{w} = \emptyset$. Based on the
		similar reason as in case 2), we can get the result immediately.
		
		We continue to prove
		{$y^{\dag}\in\mathscr{C}^2(\mathbb{R}^n_{\textgreater
			0};\mathbb{R}_{\textgreater 0}^n)$} and
		{$\gamma\in\mathscr{C}^2(\mathbb{R}^n_{\textgreater
			0};\mathbb{R})$}. Let the function
		$$\tilde{J}(x,\beta)=J(x-\beta w),~x-\beta w\in\mathbb{R}^{n}_{\textgreater 0}.$$
		Clearly, for $\forall~x\in\mathbb{R}^n_{\textgreater 0}$ there
		exists a sole $\beta=\gamma(x)$ such that $J(x-\gamma(x) w)=0$,
		i.e., $x-\gamma(x)w=y^\dag(x)$ and $\tilde{J}(x,\gamma(x))=0$. Note
		that $\tilde{J}(x,\beta)$ is continuous differentiable from the
		definition of $J(y)$ and $\frac{\partial}{\partial
			\beta}\tilde{J}(x,\beta)\Big|_{\beta=\gamma(x)}\neq0$, then by
		{the implicit function theorem} we have
		{$\gamma\in\mathscr{C}^1(\mathbb{R}^n_{\textgreater
			0};\mathbb{R})$} and $\nabla\gamma(x)=-{\tilde{J}_{x}(x,\gamma(x))}/{\tilde{J}_{\beta}(x,\gamma(x))}$.
		In addition, since the function $\tilde{J}_{x}$ and $\tilde{J}_{\beta}$ are also continuous differentiable with respect to both parameters, we can conclude that $\nabla\gamma(x)$ is continuous differentiable and therefore {$\gamma\in\mathscr{C}^2(\mathbb{R}^n_{\textgreater0};\mathbb{R})$}. Further, we get
		{$y^{\dag}\in\mathscr{C}^2(\mathbb{R}^n_{\textgreater
			0};\mathbb{R}_{\textgreater 0}^n)$} from $y^\dag(x)=x-\gamma(x)w$,
		i.e., $x=y^\dag(x)+\gamma(x)w$.
		
		Finally, we focus on proving $\gamma(x+\delta
		w)=\gamma(x)+\delta,~\forall~\delta\in\mathbb{R}$. Since
		$(x+\mathscr{S})\bigcap\mathbb{R}^{n}_{\textgreater 0}=(x+\delta
		w+\mathscr{S})\bigcap\mathbb{R}^{n}_{\textgreater 0}$, we have
		$y^\dag(x)=y^\dag(x+\delta w)$, i.e., $x+\delta w-\gamma(x+\delta
		w)w=x-\gamma(x)w$. Further, since $w$ is a set of bases of
		$\mathscr{S}$, we get $\gamma(x+\delta
		w)=\gamma(x)+\delta,~\forall~\delta$.
	\end{proof}
	
	By means of $\tilde{u}(x),~y^\dag(x)$ and $\gamma(x)$, a solution
	for the Lyapunov function PDEs (\ref{LyaS1}) plus
	(\ref{BoundaryConditionS1}) is reachable.
	
	\begin{theorem}{\label{f(x)ExistenceS1}} For a MAS {$(\mathcal{S},\mathcal{C},\mathcal{R},\mathcal{K})$} with
		$\text{dim}\mathscr{S}=1$ and a positive steady state,
		{$\bar{\mathcal{C}}_{\bar{x}}$} in the form of (\ref{naive boundary complex
			set}) is selected as the boundary complex set where $\bar{x}$ is any
		boundary point of any positive stoichiometric compatibility class
		induced by $\mathscr{S}$.
		Assume that
		$\bar{\mathcal{C}}_{\bar{x}}=\emptyset$ or
		$\bar{\mathcal{C}}_{\bar{x}}$ includes at least a reactant complex and a resultant complex, then the function defined by
		\begin{equation}{\label{SolutionOfS1}}
		f(x)= \int_{0}^{\gamma(x)} \ln{\tilde{u}(y^{\dag}(x)+ \tau w)} \dd
		\tau
		\end{equation}
		is a solution of the Lyapunov function PDEs (\ref{LyaS1}) plus
		(\ref{BoundaryConditionS1}), where $y^{\dag}(x)$, $\gamma(x)$ and
		$w$ share the same meanings with those in \textit{Lemma \ref{CoordinateTransformation}}.
	\end{theorem}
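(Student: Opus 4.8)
The plan is to check two things for the candidate $f$ of (\ref{SolutionOfS1}): that it solves the interior PDE (\ref{LyaS1}), and that it meets the boundary condition (\ref{BoundaryConditionS1}). First I would note that $f$ is well defined and $C^1$: the integration segment $\{y^{\dagger}(x)+\tau w:\tau\text{ between }0\text{ and }\gamma(x)\}$ has endpoints $y^{\dagger}(x)$ and $x$ and therefore lies in the convex set $(x+\mathscr{S})\cap\mathbb{R}^n_{>0}$, on which $\tilde u>0$ is $C^1$ by Proposition~\ref{uExistence}, while $y^{\dagger},\gamma\in\mathscr{C}^1$ by Lemma~\ref{CoordinateTransformation}.

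For the PDE the key identity to establish is $w^{\top}\nabla f(x)=\ln\tilde u(x)$; once it holds, $u=e^{w^{\top}\nabla f}=\tilde u(x)$ makes the bracketed factor of (\ref{LyaS1}) equal to $g(x,\tilde u(x))=0$, so the whole left-hand side vanishes. To obtain the identity I would differentiate $f$ along $w$: setting $h(s)=f(x+sw)$ and invoking the two facts of Lemma~\ref{CoordinateTransformation}, namely $y^{\dagger}(x+sw)=y^{\dagger}(x)$ and $\gamma(x+sw)=\gamma(x)+s$, the integral in (\ref{SolutionOfS1}) becomes $h(s)=\int_0^{\gamma(x)+s}\ln\tilde u(y^{\dagger}(x)+\tau w)\,d\tau$, whence by the fundamental theorem of calculus $h'(0)=\ln\tilde u(y^{\dagger}(x)+\gamma(x)w)=\ln\tilde u(x)$, using $x=y^{\dagger}(x)+\gamma(x)w$. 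Since $h'(0)=w^{\top}\nabla f(x)$, the identity follows.

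The boundary condition is the main obstacle. Fixing a boundary point $\bar x$, by Lemma~\ref{ZxPwNw} I may assume $Z_{\bar x}\subseteq P_w$ (the case $Z_{\bar x}\subseteq N_w$ being symmetric under $w\mapsto-w$) and approach along $x=\bar x+\beta w$, $\beta\to 0^+$. I would first record the combinatorial facts that, for $j\in Z_{\bar x}$, $m_i>0$ forces $v'_{\cdot i,j}=v_{\cdot i,j}+m_iw_j>0$ while $m_i<0$ forces $v_{\cdot i,j}=v'_{\cdot i,j}+|m_i|w_j>0$; hence $m_i>0\Rightarrow v'_{\cdot i}\notin\bar{\mathcal{C}}_{\bar x}$ and $m_i<0\Rightarrow v_{\cdot i}\notin\bar{\mathcal{C}}_{\bar x}$. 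Consequently the first sum of (\ref{BoundaryConditionS1}) runs only over $\{i:m_i>0,\,v_{\cdot i}\in\bar{\mathcal{C}}_{\bar x}\}$ and the second only over $\{i:m_i<0,\,v'_{\cdot i}\in\bar{\mathcal{C}}_{\bar x}\}$, both nonempty by the standing assumption (a reactant complex in $\bar{\mathcal{C}}_{\bar x}$ must come from an $m_i>0$ reaction and a resultant complex from an $m_i<0$ one).

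The analytic heart is controlling $\tilde u(x)$ near $\bar x$. Writing $x^{z}\sim c_z\,\beta^{\ell(z)}$ with $\ell(z):=\sum_{j\in Z_{\bar x}}z_j\ge 0$ (so $z\in\bar{\mathcal{C}}_{\bar x}\Leftrightarrow\ell(z)=0$) and using $g(x,\tilde u(x))=0$ in the form $\sum_{\{i:m_i>0\}}k_ix^{v_{\cdot i}}\sum_{j=0}^{m_i-1}u^{j}=\sum_{\{i:m_i<0\}}k_ix^{v_{\cdot i}}\sum_{j=m_i}^{-1}u^{j}$, I would argue that the left side tends to the finite positive constant $S_{+}:=\sum_{\{i:m_i>0,\,v_{\cdot i}\in\bar{\mathcal{C}}_{\bar x}\}}k_i\bar x^{v_{\cdot i}}$ once one knows $u=\tilde u(x)\to 0$; and $u\to 0$ is itself forced by this balance, since if $u$ stayed bounded away from $0$ the right side, whose coefficients $k_ix^{v_{\cdot i}}$ all vanish (as $m_i<0\Rightarrow v_{\cdot i}\notin\bar{\mathcal{C}}_{\bar x}$), would tend to $0\ne S_{+}$. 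I would then pin down the rate through $\phi:=u/\beta^{\ell(w)}$, where $\ell(w)=\sum_{j\in Z_{\bar x}}w_j>0$: because $\ell(v'_{\cdot i})=\ell(v_{\cdot i})+m_i\ell(w)$, one gets $x^{v_{\cdot i}}u^{m_i}\sim c\,\phi^{m_i}\beta^{\ell(v'_{\cdot i})}$, and feeding this into the balance shows $\phi$ must stay bounded and bounded away from $0$ (if $\phi\to\infty$ the right side $\to 0$, if $\phi\to 0$ it $\to\infty$, each contradicting $\to S_{+}$). With $\phi$ so controlled, the terms with $v'_{\cdot i}\notin\bar{\mathcal{C}}_{\bar x}$ (exponent $\ell(v'_{\cdot i})>0$) drop out, forcing the surviving sum $\sum_{\{i:m_i<0,\,v'_{\cdot i}\in\bar{\mathcal{C}}_{\bar x}\}}k_ix^{v_{\cdot i}}u^{m_i}\to S_{+}$; as this is exactly the second sum of (\ref{BoundaryConditionS1}) while the first also $\to S_{+}$, their difference tends to $0$. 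The hardest part throughout is this simultaneous control of the vanishing factor $x^{v_{\cdot i}}$ against the blowing-up factor $u^{m_i}$, i.e.\ resolving the $0\cdot\infty$ limits via the exact scaling rate $\ell(w)$.
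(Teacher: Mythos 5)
Your proof is correct, and while your interior argument (establishing $w^{\top}\nabla f(x)=\ln\tilde{u}(x)$ from $y^{\dag}(x+sw)=y^{\dag}(x)$, $\gamma(x+sw)=\gamma(x)+s$ and the fundamental theorem of calculus) coincides with the paper's, your treatment of the boundary condition takes a genuinely different route. The paper substitutes $u=x^{\tilde{w}}a$ (with $\tilde{w}$ the restriction of $w$ to $Z_{\bar{x}}$), invokes the implicit function theorem to get a root $\hat{a}(x)$ of the ``boundary part'' $h(x,a)=0$ that extends continuously to $\bar{x}$, and then runs a perturbation--squeeze argument (the auxiliary quantities $\theta_1,\theta_2,\theta_3$) to show that the root $\tilde{a}(x)$ of the full equation converges to $\hat{a}(\bar{x})$, so the boundary expression $h(x,\tilde{a}(x))$ tends to $h(\bar{x},\hat{a}(\bar{x}))=0$. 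You instead do a direct asymptotic analysis of $\tilde{u}$ along the ray $\bar{x}+\beta w$: the combinatorial reduction ($v_{\cdot i}\in\bar{\mathcal{C}}_{\bar{x}}\Rightarrow m_i>0$, $v'_{\cdot i}\in\bar{\mathcal{C}}_{\bar{x}}\Rightarrow m_i<0$), the forced decay $\tilde{u}\to 0$, and the exact rate $\tilde{u}\asymp\beta^{\ell(w)}$ extracted from the exponent identity $\ell(v'_{\cdot i})=\ell(v_{\cdot i})+m_i\ell(w)$; your normalization $\phi=u/\beta^{\ell(w)}$ is, up to a positive constant, the paper's $a=u/x^{\tilde{w}}$. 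Your version is shorter and makes it transparent why the off-boundary terms of $g(x,\tilde{u})=0$ drop out, whereas the paper's version identifies the limit of the second sum constructively as $h(\bar{x},\hat{a}(\bar{x}))$ at the price of the delicate $\theta$-estimates. One small looseness worth fixing: when ruling out $\tilde{u}\not\to 0$ you say the right-hand side ``would tend to $0\neq S_+$'' before having shown the left-hand side tends to $S_+$; the repair is immediate, since the $j=0$ terms alone bound the left-hand side below by a quantity converging to $S_+>0$.
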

	
	\begin{proof}
		(1) The first part serves for proving that $f(x)$ in the form of
		(\ref{SolutionOfS1}) satisfies (\ref{LyaS1}). Since {$y^{\dag},\gamma,\tilde{u}\in\mathscr{C}^{2}$}, $f(x)$ is obviously a twice
		continuous differentiable function defined on $\mathbb{R}^{n}_{>0}$.
		Thus, we have
		\begin{eqnarray*}
			w^\top\nabla f(x)&=&\lim_{\delta\to 0} \frac{f(x+\delta
				w)-f(x)}{\delta}\\
			&=&\lim_{\delta\to 0}
			\frac{1}{\delta}\int_{\gamma(x)}^{\gamma(x+\delta w)}
			\ln{\tilde{u}(y^{\dag}(x)+ \tau w)} \dd \tau\\
			&=&\lim_{\delta\to 0}
			\frac{1}{\delta}\int_{\gamma(x)}^{\gamma(x)+\delta}
			\ln{\tilde{u}(y^{\dag}(x)+ \tau w)} \dd \tau\\
			&=&\ln{\tilde{u}(y^{\dag}(x)+ \gamma(x) w)}.
		\end{eqnarray*}
		Namely, $\exp\{ w^{\top}\nabla f(x)\}=\tilde{u}(y^{\dag}(x)+
		\gamma(x) w)=\tilde{u}(x)$, which obviously satisfies (\ref{LyaS1}).
		
		(2) The second part contributes to verifying that the current $f(x)$
		satisfies the boundary condition of (\ref{BoundaryConditionS1}). We
		address this issue according to two different cases.
		
		\textit{Case I:} $\bar{\mathcal{C}}_{\bar{x}}=\emptyset$. In this case
		(\ref{BoundaryConditionS1}) is obviously true since it is ``$0=0$".
		
		\textit{Case II:} $\bar{\mathcal{C}}_{\bar{x}}\neq \emptyset$. In this
		case there include at least two complexes in
		{$\bar{\mathcal{C}}_{\bar{x}}$}, one acting as a reactant complex and the
		other as a resultant complex. According to the definition of
		$\bar{\mathcal{C}}_{\bar{x}}$ in (\ref{naive boundary complex set}), we
		have $v_{ji}=0$ and $v'_{ji'}=0$ for $\forall~j\in Z_{\bar{x}}$ if
		$v_{\cdot i},v'_{\cdot i'}\in\mathcal{C}_{\bar{x}}$. Note that
		$i=i'$ is possible. Further, from \textit{Lemma \ref{ZxPwNw}} we {get}
		$Z_{\bar{x}}\subseteq P_{w}$ or $Z_{\bar{x}}\subseteq N_{w}$. For
		simplicity, let $Z_{\bar{x}}\subseteq P_{w}$ for the following
		proof.
		
		Imitating the boundary condition of (\ref{BoundaryConditionS1}), for
		$\forall~ \bar{x}$ we define a function $h(x,a)$ from
		$\left(\{\bar{x}\}\cup(\bar{x}+\mathscr{S})\cap\mathbb{R}^{n}_{>0}\right)\times
		\mathbb{R}_{>0}$ to $\mathbb{R}$ as
		\begin{equation*}
		h(x,a)=\sum_{\{i|v_{\cdot
				i}\in\mathcal{C}_{\bar{x}}\}}k_{i}x^{v_{\cdot
				i}}-\sum_{\{i|v'_{\cdot
				i}\in\mathcal{C}_{\bar{x}}\}}k_{i}x^{v_{\cdot
				i}+m_{i}\tilde{w}}{a}^{m_{i}},
		\end{equation*}
		where $\tilde{w}\in\mathbb{R}^n$ with the $j$th ($j=1,\cdots,n$)
		entry satisfying $\tilde{w}_j=w_{j}$ if $j\in Z_{\bar{x}}$, and
		$\tilde{w}_j=0$ otherwise, and $m_i$ shares the same meaning with
		(\ref{LinearExpression}). For the first term in the right hand side,
		since for $\forall~j\in Z_{\bar{x}}$, $v_{ji}=0$, we have
		$\bar{x}^{v_{\cdot i}}\textgreater 0$. We further analyze the sign
		of $x^{v_{\cdot i}+m_{i}\tilde{w}}$ in the second term. At this time
		$v'_{\cdot i}\in\mathcal{C}_{\bar{x}}$, we thus have $v'_{ji}=0$ for
		$\forall~j\in Z_{\bar{x}}$, where $v'_{ji}=v_{ji}+m_{i}
		w_{j}~(\text{based on
			(\ref{LinearExpression})})=v_{ji}+m_{i}\tilde{w}_{j}=0$. This means
		$x^{v_{\cdot i}+m_{i}\tilde{w}}\textgreater 0$ and $m_i\textless 0$
		($j\in Z_{\bar{x}}\subseteq P_{w},\tilde{w}_j\textgreater
		0~\text{and}~v_{ji}\textgreater 0$) in the second term. Based on
		these facts, we get
		\begin{equation*}
		\lim_{a \to 0}h(x,a)=-\infty, ~~\lim_{a \to +\infty}h(x,a)>0~~\text{and}~~\frac{\partial}{\partial a}h(x,a)>0.
		\end{equation*}
		As a result, there must exist a unique positive function
		$\hat{a}(x)$ such that $h(x,\hat{a}(x))=0$ for $\forall~x$ by
		{the intermediate value theorem} and monotonicity. Further, the function
		$\hat{a}(x)$ is continuous differentiable by {the implicit function
		theorem}.
		
		Based on $h(x,a)$ we further define another continuous
		differentiable function $\tilde{h}(x,a)$ from
		$\left((\bar{x}+\mathscr{S})\cap\mathbb{R}^{n}_{>0}\right)\times
		\mathbb{R}_{>0}$ to $\mathbb{R}$ as
		\begin{equation*}
		\tilde{h}(x,a)=h(x,a)+\sum_{\{i|v_{\cdot
				i}\notin\mathcal{C}_{\bar{x}}\}}k_{i}x^{v_{\cdot i}}
		-\sum_{\{i|v'_{\cdot
				i}\notin\mathcal{C}_{\bar{x}}\}}k_{i}x^{v_{\cdot
				i}+m_{i}\tilde{w}}{a}^{m_{i}}.
		\end{equation*}
		For the second term in the right hand side, when $v_{\cdot i}\notin
		\mathcal{C}_{\bar{x}}$ there exists $j\in Z_{\bar{x}}$ such that
		$\bar{x}_j=0$ and $v_{ji}\textgreater 0$, i.e., $\bar{x}^{v_{\cdot
				i}}=0$. Analogously, for the third term when $v'_{\cdot i}\notin
		\mathcal{C}_{\bar{x}}$ there also exists $j\in Z_{\bar{x}}$ such
		that $\bar{x}_j=0$ and $v'_{ji}=v_{ji}+m_{i}
		w_{j}=v_{ji}+m_{i}\tilde{w}_{j}\textgreater 0$, i.e.,
		$\bar{x}^{v_{\cdot i}+m_{i}\tilde{w}}=0$. Hence, for
		$\forall~x\in(\bar{x}+\mathscr{S})\cap\mathbb{R}^{n}_{>0}$ we get
		\begin{equation*}
		\lim_{x\to\bar{x}} \sum_{\{i|v_{\cdot
				i}\notin\mathcal{C}_{\bar{x}}\}}k_{i}x^{v_{\cdot i}}=0
		~~\text{and}~~ \lim_{x\to\bar{x}} \sum_{\{i|v'_{\cdot
				i}\notin\mathcal{C}_{\bar{x}}\}}k_{i}x^{v_{\cdot
				i}+m_{i}\tilde{w}}=0.
		\end{equation*}
		Further we have
		$$\lim_{x\to\bar{x}}\frac{\partial}{\partial a}\tilde{h}(x,\hat{a}(x))=\frac{\partial}{\partial
			a}h(x,\hat{a}(x))\Big|_{x=\bar{x}}\textgreater 0,$$ which {means}
		$\frac{\partial}{\partial a}\tilde{h}(x,\hat{a}(x))\textgreater 0$ in
		$\mathcal{E}_1(\bar{x})$, a certain neighborhood of $\bar{x}$. Based
		on the same analysis, we can obtain $\frac{\partial^{2}}{\partial
			a^{2}}\tilde{h}(x,a)\leq 0$ in another certain neighborhood of
		$\bar{x}$, denoted by $\mathcal{E}_2(\bar{x})$.
		
		Suppose $v'_{\cdot p}\in\mathcal{C}_{\bar{x}}$ and consider the
		neighborhood of $\bar{x}$,
		$\mathcal{E}_1(\bar{x})\cap\mathcal{E}_2(\bar{x})$, within which let
		$$\theta_{1}(x)=-\left(\sum_{\{i|v_{\cdot
				i}\notin\mathcal{C}_{\bar{x}}\}}k_{i}x^{v_{\cdot
				i}}-\sum_{\{i|v'_{\cdot
				i}\notin\mathcal{C}_{\bar{x}}\}}k_{i}x^{v_{\cdot
				i}+m_{i}\tilde{w}}{\hat{a}(x)}^{m_{i}}\right) \left/\frac{\partial
			\tilde{h}(x,a)}{\partial a}\Big|_{a=\hat{a}(x)}\right.$$ and
		$$\theta_{2}(x)=\frac{\sum_{\{i|v'_{\cdot i}\notin\mathcal{C}_{\bar{x}}\}}\Big(\sum_{m_{i}>0}k_{i}x^{v_{\cdot i}+m_{i}\tilde{w}}[2 \hat{a}(x)]^{m_{i}}
			+\sum_{m_{i}<0}k_{i}x^{v_{\cdot
					i}+m_{i}\tilde{w}}\left[\frac{1}{2}\hat{a}(x)\right]^{m_{i}}\Big)}
		{k_{p}x^{v_{\cdot p}+m_{p}\tilde{w}}\hat{a}(x)^{m_{p}}},$$ then we
		get
		\begin{eqnarray*}
			\tilde{h}(x,\hat{a}(x)+\theta_{1}(x))\leq\tilde{h}(x,\hat{a}(x))+\frac{\partial
				\tilde{h}(x,a)}{\partial
				a}\Big|_{a=\hat{a}(x)}\theta_{1}(x)={h}(x,\hat{a}(x))=0.
		\end{eqnarray*}
		Further let $$\theta_{3}(x)=\text{max}\left\{ 1,~~\left|
		1-\frac{\sum_{\{i|v_{i}\notin\mathcal{C}_{\bar{x}}\}}k_{i}x^{v_{i}}}{k_{p}x^{v_{\cdot
					p}+m_{p}\tilde{w}}\hat{a}(x)^{m_{p}}}+\theta_{2}(x)\right|^{\frac{1}{m_p}}\right\}.$$
		It is easy to verify that $\lim_{x\to \bar{x}}\theta_{3}(x)=1$, from
		which we also obtain $1<\theta_{3}(x)<2$ when $x$ is in a certain
		neighborhood of $\bar{x}$, $\mathcal{E}_3(\bar{x})$. Hence, for
		$\forall~x\in\{\mathcal{E}_1(\bar{x})\cap\mathcal{E}_2(\bar{x})\cap\mathcal{E}_3(\bar{x})\}$
		we have $h(x,\theta_{3}(x)\hat{a}(x))\textgreater h(x,\hat{a}(x))$,
		$0\textgreater k_{p}x^{v_{\cdot
				p}+m_{p}\tilde{w}}\left(\theta_{3}(x)\hat{a}(x)\right)^{m_{p}}-k_{p}x^{v_{\cdot
				p}+m_{p}\tilde{w}}\hat{a}(x)^{m_{p}}$ and
		\begin{eqnarray*}
			\sum_{\{i|v_{\cdot i}\notin\mathcal{C}_{\bar{x}}\}}k_{i}x^{v_{\cdot
					i}} -\sum_{\{i|v'_{\cdot
					i}\notin\mathcal{C}_{\bar{x}}\}}k_{i}x^{v_{\cdot
					i}+m_{i}\tilde{w}}{(\theta_{3}(x)\hat{a}(x))}^{m_{i}}>\sum_{\{i|v_{\cdot
					i}\notin\mathcal{C}_{\bar{x}}\}}k_{i}x^{v_{\cdot
					i}}~~~~~~~~~~~~\\-\sum_{\{i|v'_{\cdot
					i}\notin\mathcal{C}_{\bar{x}}\}}\left(\sum_{m_{i}>0}k_{i}x^{v_{\cdot
					i}+m_{i}\tilde{w}}{(2\hat{a}(x))}^{m_{i}}
			+\sum_{m_{i}<0}k_{i}x^{v_{\cdot
					i}+m_{i}\tilde{w}}{\left(\frac{1}{2}\hat{a}(x)\right)}^{m_{i}}\right).
		\end{eqnarray*}
		Note that the sum of the left terms of these three inequalities is
		just expressed as
		$$\tilde{h}(x,\theta_{3}(x)\hat{a}(x))=h(x,\theta_{3}(x)\hat{a}(x))+\sum_{\{i|v_{\cdot
				i}\notin\mathcal{C}_{\bar{x}}\}}k_{i}x^{v_{\cdot i}}
		-\sum_{\{i|v'_{\cdot
				i}\notin\mathcal{C}_{\bar{x}}\}}k_{i}x^{v_{\cdot
				i}+m_{i}\tilde{w}}{(\theta_{3}(x)\hat{a}(x))}^{m_{i}}$$ while the
		sum of the right terms is greater than or equal to zero, i.e.,
		$$\tilde{h}(x,\theta_{3}(x)\hat{a}(x))\geq 0.$$
		Therefore, there {must exist} a solution between
		$\hat{a}(x)+\theta_{1}(x)$ and $\theta_{3}(x)\hat{a}(x)$, denoted by
		$\tilde{a}(x)$, such that $\tilde{h}(x,\tilde{a}(x))=0$, i.e.,
		$$h(x,\tilde{a}(x))+\sum_{\{i|v_{\cdot
				i}\notin\mathcal{C}_{\bar{x}}\}}k_{i}x^{v_{\cdot i}}
		-\sum_{\{i|v'_{\cdot
				i}\notin\mathcal{C}_{\bar{x}}\}}k_{i}x^{v_{\cdot
				i}+m_{i}\tilde{w}}(\tilde{a}(x))^{m_i}=0.$$ This together with the
		definition of $\tilde{h}(x,a)$ evaluated at $a=\tilde{a}(x)$ leads to
		$$\sum_{i=1}^{r} k_{i}x^{v_{\cdot i}}-\sum_{i=1}^{r} k_{i}x^{v_{\cdot
				i}+m_{i}\tilde{w}} (\tilde{a}(x))^{m_i}=0.$$ Comparing it to the
		Lyapunov PDE of (\ref{LyaS1}) yields that
		$u(x)=x^{\tilde{w}}\tilde{a}(x)$ is a solution of $(u-1)g(x,u)=0$.
		Note the facts that
		{
			$$\lim_{x\to
				\bar{x}}(\hat{a}(x)+\theta_1(x))=\lim_{x\to
				\bar{x}}(\theta_3(x)\hat{a}(x))=\lim_{x\to
				\bar{x}}\hat{a}(x),$$
			and $\lim_{x\to
				\bar{x}}\tilde{a}(x)$ lies between $\lim_{x\to
				\bar{x}}(\hat{a}(x)+\theta_1(x))$ and $\lim_{x\to
				\bar{x}}(\theta_3(x)\hat{a}(x))$, by {the squeeze
			theorem} we thus have
			$$\lim_{x\to\bar{x}}\tilde{a}(x)=\hat{a}(\bar{x})~\text{and}~\lim_{x\to\bar{x}}x^{\tilde{w}}\tilde{a}(x)=0.$$
			Further, there
			exists a certain neighborhood of $\bar{x}$,
			$\mathcal{E}_4(\bar{x})$, such that $x^{\tilde{w}}\tilde{a}(x)\neq
			1$. Hence, if
			$x\in\mathcal{E}_1(\bar{x})\cap\mathcal{E}_2(\bar{x})\cap\mathcal{E}_3(\bar{x})\cap\mathcal{E}_4(\bar{x})$,
			we have $g(x,x^{\tilde{w}}\tilde{a}(x))=0$, i.e.,
			$\tilde{u}(x)=x^{\tilde{w}}\tilde{a}(x)$.
		}
		
		Utilizing the above analysis, we {get}
		\begin{equation*}
		\text{L.H.S of Eq. (\ref{BoundaryConditionS1})}=\lim_{
			\begin{tiny}
			\begin{array}{c}
			x \to \bar{x}\\
			x\in \{\bar{x}+\mathscr{S}\}\cap\mathbb{R}^{n}_{>0}
			\end{array}
			\end{tiny}}
		h(x,\tilde{a}(x))=h(\bar{x},\hat{a}(\bar{x}))=0.
		\end{equation*}
		Therefore, $f(x)$ in the form of (\ref{SolutionOfS1}) satisfies the
		boundary condition of (\ref{BoundaryConditionS1}).
		
		Similarly, {in the case of} $Z_{x} \subseteq N_{w}$ the results hold too,
		which completes the proof.
	\end{proof}
	
	The following task focuses on verifying if $f(x)=
	\int_{0}^{\gamma(x)} \ln{\tilde{u}(y^{\dag}(x)+ \tau w)} \dd \tau$
	is able to serve as an Lyapunov function for MASs with
	$\text{dim}\mathscr{S}=1$.
	
	\begin{theorem}{\label{StabilityS1}}
		For a MAS {$(\mathcal{S},\mathcal{C},\mathcal{R},\mathcal{K})$} with
		$\text{dim}\mathscr{S}=1$ and a positive steady state
		$x^*\in\mathbb{R}^n_{\textgreater 0}$, let
		$\bar{\mathcal{C}}_{\bar{x}}$ defined by (\ref{naive boundary
			complex set}) represent the boundary complex set where $\bar{x}$ is
		any boundary point of any positive stoichiometric compatibility
		class induced by $\mathscr{S}$. If
		\begin{itemize}\itemindent 30pt
			\item $\bar{\mathcal{C}}_{\bar{x}}=\emptyset$ or
			$\bar{\mathcal{C}}_{\bar{x}}$ includes at least a reactant complex and a resultant complex;
			\item $w^\top\frac{\partial }{\partial x} g(x^*,1)\textless 0$ with
			$g(x,u)$ defined by (\ref{gDefinition}),
		\end{itemize}
		then the Lyapunov Function PDEs (\ref{LyaS1}) and (\ref{BoundaryConditionS1}) are qualified to generate a Lyapunov function \eqref{SolutionOfS1} to render this MAS to be locally asymptotically stable at
		$x^{*}$.
	\end{theorem}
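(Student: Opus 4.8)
The plan is to assemble the pieces already established --- \textit{Theorem~4.9}, \textit{Lemma~4.10}, and \textit{Lemma~4.11} --- into a single application of the direct (second) method of Lyapunov, carried out \emph{relative to} the invariant positive stoichiometric compatibility class $(x^*+\mathscr{S})\cap\mathbb{R}^n_{>0}$ rather than on the whole of $\mathbb{R}^n_{>0}$. First I would invoke \textit{Theorem~4.9}: the standing hypothesis that $\bar{\mathcal{C}}_{\bar{x}}\neq\emptyset$ if and only if $\bar{\mathcal{C}}_{\bar{x}}$ contains both a reactant complex $v_{\cdot i}$ and a resultant complex $v'_{\cdot i'}$ is exactly what is required there, so the function $f(x)=\int_0^{\gamma(x)}\ln\tilde{u}(y^{\dag}(x)+\tau w)\,\dd\tau$ is a genuine solution of the Lyapunov function PDEs (\ref{LyaS1}) plus (\ref{BoundaryConditionS1}). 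This furnishes the candidate Lyapunov function $V(x):=f(x)-f(x^*)$, which satisfies $V(x^*)=0$.

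Next I would establish that $V$ is positive definite with respect to $x^*$ on the class. The remaining hypothesis $w^\top\frac{\partial}{\partial x}g(x^*,1)<0$ is precisely the premise of \textit{Lemma~4.11}, which yields that $f$ is locally strictly convex around $x^*$ along $\mathscr{S}$ and attains its minimum there. Since the motion is confined to the one-dimensional affine slice $(x^*+\mathscr{S})\cap\mathbb{R}^n_{>0}$, strict convexity in the single direction $w$ is all that is needed: writing $x=x^*+m w$ for the points of the class, $V(x)>0$ for every $x\neq x^*$ in a sufficiently small relative neighborhood of $x^*$.

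For the decay condition I would cite \textit{Lemma~4.10}: for this same $f$, $\dot f(x)\le 0$ holds, with equality exactly at steady states, i.e.\ where $g(x,1)=0$. The same hypothesis $w^\top\frac{\partial}{\partial x}g(x^*,1)<0$ makes $m\mapsto g(x^*+m w,1)$ strictly decreasing through $m=0$, so $x^*$ is the \emph{only} zero of $g(\cdot,1)$, hence the only steady state, in a relative neighborhood of $x^*$ inside the class. Consequently $\dot V(x)=\dot f(x)<0$ for every $x\neq x^*$ in that neighborhood, which upgrades the semidefiniteness of \textit{Theorem~4.1} to strict negative definiteness relative to the class.

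With $V$ positive definite and $\dot V$ negative definite relative to the class, the second Lyapunov theorem, applied to the reduced flow on the invariant manifold, delivers local asymptotic stability at $x^*$. The genuinely delicate point --- the main obstacle --- is precisely the bookkeeping of ``relative to the class'': both the convexity of \textit{Lemma~4.11} and the strict decay above live only on the one-dimensional slice spanned by $w$, while transverse to $\mathscr{S}$ the Hessian of $f$ is only semidefinite and $\dot f$ carries no information. I would therefore make explicit that the dynamics (\ref{DeterministicModel}) leaves $(x^*+\mathscr{S})\cap\mathbb{R}^n_{>0}$ invariant, parametrize the class by the scalar coordinate $\gamma$ of \textit{Lemma~4.8}, and verify the hypotheses of Lyapunov's theorem for the resulting scalar flow, so that stability is asserted within the compatibility class in the sense used throughout for CRNs.
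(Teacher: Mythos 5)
Your proposal is correct and follows essentially the same route as the paper, whose proof is just the one-line remark that the result follows by combining \textit{Proposition 4.6}, \textit{Theorem 4.9}, \textit{Lemma 4.10} and \textit{Lemma 4.11} with the finiteness of equilibria in each positive stoichiometric compatibility class. You assemble exactly those ingredients, and your explicit handling of the ``relative to the class'' issue and of the isolation of $x^*$ via $w^\top\frac{\partial}{\partial x}g(x^*,1)<0$ supplies details the paper leaves implicit.
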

	
	\begin{proof}
		Since $g_{x}(x,u)$ and $\tilde{u}(x)$ are continuous and $\tilde{u}(x^{*})=e^{w^\top\nabla
			f(x^*)}=1$, by the second condition listed in the theorem, there is a neighborhood of $x^{*}$, denoted as $\delta(x^{*})$, such that $\forall~ x\in\delta(x^{*})$ we have
		\begin{equation}{\label{eq. convex S1}}
		w^\top\frac{\partial }{\partial x} g(x,\tilde{u}(x))< 0.
		\end{equation}
		Moreover, for the function $f(x)$ given in \eqref{SolutionOfS1}, since $w^\top\nabla f(x)=\ln \tilde{u}(x)$, we have
		$$\nabla^2 f(x)w=\frac{\nabla
			\tilde{u}(x)}{\tilde{u}(x)}.$$
		Therefore, $\forall~ x\in\delta(x^{*})$ and $\forall~ \mu \in \mathscr{S}$ there is
		\begin{eqnarray}
		\mu^{\top} \nabla^{2}f(x) \mu
		&=& (\mu^{\top}w)^{2} \cdot w^{\top} \nabla^{2}f(x) w \notag\\
		&=& (\mu^{\top}w)^{2} \cdot
		\frac{w^{\top}\nabla
			\tilde{u}(x)}{\tilde{u}(x)} \notag\\
		&=&(\mu^{\top}w)^{2} \cdot
		\frac{-w^\top\frac{\partial }{\partial x}
			g(x,\tilde{u})
			/\frac{\partial}{\partial u} g(x,\tilde{u})
		}
		{\tilde{u}(x)} \notag\\
		&\geq& 0, \notag
		\end{eqnarray}
		where the last inequality follows from \eqref{eq. convex S1} and the equality holds if and only if $\mu=\mathbbold{0}_{n}$.
		Thus the condition (\ref{eq. strictly convex condition}) is satisfied.
		\textit{Theorem \ref{f(x)ExistenceS1}} has shown that the function \eqref{SolutionOfS1} is a solution of the PDEs, therefore the result holds immediately from \textit{Theorem \ref{thm lya function}}.
	\end{proof}
	
	\begin{remark}
		The condition $w^\top\frac{\partial }{\partial x} g(x^*,1)\textless
		0$ essentially characterizes some behaviors of the MAS system after
		linearization. From the dynamic equation (\ref{DeterministicModel})
		{in the case of} $\text{dim}\mathscr{S}=1$
		$$\dot{x}(t)=\sum_{i=1}^{r} k_{i} x^{v_{\cdot i}} (v'_{\cdot i}-v_{\cdot i})=\sum_{i=1}^{r} k_{i} x^{v_{\cdot i}}m_iw,$$
		we get the linearized form at $x=x^*$ as
		$$\dot{x}(t)=w\left(\frac{\partial g(x^*,1)}{\partial x}\right)^\top(x-x^*).$$
		It is clear that the coefficient matrix $w\left(\frac{\partial
			g(x^*,1)}{\partial x}\right)^\top$ is of rank one and thus has only
		two eigenvalues $w^\top\frac{\partial }{\partial x} g(x^*,1)$ plus
		$0$ if $n\neq 1$. Therefore, the condition $w^\top\frac{\partial
		}{\partial x} g(x^*,1)\textless 0$ means that the coefficient matrix
		need have a negative eigenvalue, which, namely, requests the
		linearized system of the MAS to be necessarily stable (but not
		necessarily asymptotically stable unless $n=1$) at $x=x^*$.
	\end{remark}
	
	{Another point should be noted that the solution \mbox{\eqref{SolutionOfS1}} of the Lyapunov Function PDEs has the similar form with the Lyapunov function constructed in Anderson and his coworkers' paper for Birth-Death processes\mbox{\cite{Anderson2015}}. Both functions are established by integrating a logarithmic function. The possible reasons are that the birth-death process studied in\mbox{\cite{Anderson2015}} is also a $1$-dimensional CRN, and that the Lyapunov function PDEs and the scaling limit of the non-equilibrium potential have the same origin. This phenomenon conversely implies that the PDEs can work for Birth-Death processes.}
	
	We further demonstrate the {efficiency} %efficacy
	of Lyapunov Function PDEs for CRNs
	with $\text{dim}\mathscr{S}=1$ through two examples.
	
	\begin{example}
		For the MAS
		\begin{eqnarray}
		S_{1}\stackrel{k_1}{\longrightarrow} S_{2}, \notag \qquad		2S_{2}\stackrel{k_2}{\longrightarrow} 2S_{1}, \notag
		\end{eqnarray}
	    {we have the species set $\mathcal{S}=\{S_1,S_2\}$, the complex set $\mathcal{C}=\{v_{\cdot 1},v'_{\cdot 1},v_{\cdot 2},v'_{\cdot 2}\}$, the reaction set $\mathcal{R}=(v_{\cdot 1}\to v'_{\cdot 1},v_{\cdot 2}\to v'_{\cdot 2})$, and the kinetics set $\mathcal{K}=(k_1,k_2)$, where  }
		\begin{equation*}
		v_{\cdot 1}=
		\left(
		\begin{array}{c}
		1\\ 0
		\end{array}
		\right),~~
		v'_{\cdot 1}=
		\left(
		\begin{array}{c}
		0\\ 1
		\end{array}
		\right),~~
		v_{\cdot 2}=
		\left(
		\begin{array}{c}
		0\\2
		\end{array}
		\right),~~ v'_{\cdot 2}= \left(
		\begin{array}{c}
		2\\ 0
		\end{array}
		\right),~~ \mathrm{dim}\mathscr{S}=1.
		\end{equation*}
		{By the mass-action kinetics, the dynamics of the system is expressed as}
		\begin{equation*}
		\left\{
		\begin{array}{lr}
		\dot{x}_1(t)=-k_1x_1+2k_2x_2^2,\\
		\dot{x}_2(t)=k_1x_1-2k_2x_2^2.
		\end{array}
		\right.
		\end{equation*}
		
		{By choosing} $w=(-1,1)^{\top}$ as {the basis} for $\mathscr{S}$, {we can write the Lyapunov Function PDEs in the form of \mbox{\eqref{LyaS1}}} where
		$m_{1}=1$ and $m_{2}=-2$. Moreover, from \textit{Proposition \ref{uExistence}} we
		have
		$$P_w=\{2\},~~N_w=\{1\}~~\mathrm{and}~~g(x,u)=k_1x_1-k_2x_2^2u^{-1}-k_2x_2^2u^{-2}.$$
		Furthermore, utilizing \textit{Lemma \ref{CoordinateTransformation}} and setting $g(x,u)=0$ we get {auxiliary functions}
		\begin{equation*}
		y^\dag(x)=\frac{1}{2}\left(
		\begin{array}{c}
		\mathbbm{1}_2^\top\\
		\mathbbm{1}_2^\top \\
		\end{array}
		\right)x,~~\gamma(x)=\frac{w^\top
			x}{2}~~\mathrm{and}~~\tilde{u}(x)=\frac{k_2x_2^2+x_2\sqrt{k_2^2x_2^2+4k_1k_2x_1}}{2k_1x_1}.
		\end{equation*}
		For the considered MAS, there include two types of boundary points, {one type of points are} $\bar{x}=(\bar{x}_{1},0)^{\top}$ with $\bar{x}_{1}>0$, {the other type of points are}
		$\bar{x}=(0,\bar{x}_{2})^{\top}$ with $\bar{x}_{2}>0$. Following the
		definition of $\bar{C}_{\bar{x}}$ in (\ref{naive boundary complex
			set}), we set
		\begin{equation*}
		\bar{C}_{\bar{x}}= \left\{
		\begin{array}{ll}
		\left\{v_{\cdot 1},v'_{\cdot 2}\right\}, & \bar{x}=(\bar{x}_{1},0)^{\top}~\mathrm{with}~\bar{x}_{1}>0;\\
		\left\{v_{\cdot 2},v'_{\cdot 1}\right\}, &
		\bar{x}=(0,\bar{x}_{2})^{\top}~\mathrm{with}~\bar{x}_{2}>0.
		\end{array}
		\right.
		\end{equation*}
		Finally, we obtain a solution, based on \textit{Theorem \ref{f(x)ExistenceS1}}, as
		$$f(x)= \int_{0}^{\gamma(x)}
		\ln{\frac{k_2[y^\dag(x)+\tau w]^{v_{\cdot
						2}}+\sqrt{k_2^2[y^\dag(x)+\tau w]^{2v_{\cdot
							2}}+4k_1k_2[y^\dag(x)+\tau w]^{v_{\cdot 1}+v_{\cdot
							2}}}}{2k_1[y^\dag(x)+\tau w]^{v_{\cdot 1}}}} \dd \tau$$ for the
		Lyapunov Function PDEs (\ref{LyaS1}) plus
		(\ref{BoundaryConditionS1}).
		
		Let $x^*\in\mathbb{R}^2_{\textgreater 0}$ be an equilibrium
		in $(x^*+\mathscr{S})\cap \mathbb{R}^2_{\textgreater 0}$. According
		to \textit{Theorem \ref{StabilityS1}}, since $w^{\top}\frac{\partial
			g(x^{*},1)}{\partial x}=-k_{1}-4k_{2}x^{*}_{2}<0$, the current
		$f(x)$ is an available Lyapunov function for suggesting the studied
		system to be locally asymptotically stable at $x^*$.
	\end{example}
	
	Note that the condition $w^{\top}\frac{\partial g(x^{*},1)}{\partial
		x}=-k_{1}-4k_{2}x^{*}_{2}\textless 0$ in \textit{Eaxmple 2} is
	always true, which in turn means it reasonable to set the condition
	of $w^{\top}\frac{\partial g(x^{*},1)}{\partial x}\textless 0$ in
	\textit{Theorem \ref{StabilityS1}}.

\begin{example}
{This $1$-dimensional MAS only contains a single species $S_1$ and has a reversible reaction structure, given by }
    	\begin{equation*}
		0 \autorightleftharpoons{$k_{1}$}{$k_{2}$} S_1, \quad 2S_1\autorightleftharpoons{$k_{3}$}{$k_{4}$}3S_1.
		\end{equation*}
{Denote $\mathcal{R}_i$ by the reaction with the rate coefficient $k_i$, $i=1,\cdots,4$, then $v_{\cdot 1}=v'_{\cdot 2}=0$, $v_{\cdot 2}=v'_{\cdot 1}=1$, $v_{\cdot 3}=v'_{\cdot 4}=2$, and $v_{\cdot 4}=v'_{\cdot 3}=3$. Further by setting $k_1=k_4=2$ and $k_2=k_3=1$, the dynamics of this MAS is written as}
		\begin{equation*}
			\dot{x}_{1}(t)=2-x_1+x_1^{2}-2x_1^{3}.
		\end{equation*}
{Clearly, the system admits a unique equilibrium point $x^*_1=1$. Note that this equilibrium is not complex balanced since at it the zero complex does not balance between the reaction rate $2$ and the production rate $1$. The pseudo-Helmholtz free energy function is thus not an appropriate Lyapunov function for stability analysis. Instead, we use the current Lyapunov Function PDEs for $1$-dimensional CRNs to produce the Lyapunov function, i.e., Eq. \mbox{(\ref{SolutionOfS1})}. We set $w=1$ as the base for the stoichiometric subspace, then $m_1=m_3=1$, $m_2=m_4=-1$ and $g(x,u)=2+x^2-\frac{1}{u}(x+2x^3)$. Further, we get}
	   \begin{equation*}
		y^\dag(x)=1,
		~~\gamma(x)=x-1,
		~~\tilde{u}(x)=\frac{x+2x^{3}}{2+x^{2}}~~\mathrm{and}
		~~\bar{C}_{\bar{x}}=\{0\}.
		\end{equation*}
{Finally, based on on \textit{Theorem} \mbox{\ref{f(x)ExistenceS1}}, the solution for the Lyapunov Function PDEs \mbox{(\ref{LyaS1})} plus	\mbox{(\ref{BoundaryConditionS1})} is expressed as}
		$$f(x)= \int_0^{x-1} \ln\left(\frac{(1+\tau)+2(1+\tau)^{3}}{2+(1+\tau)^{2}}\right) d \tau.$$
{Since $w^{\top}\frac{\partial g(x^{*},1)}{\partial x}=2x^{*} -1 -6 (x^{*})^{2}=-5\leq 0$, the above function $f(x)$ is a valid Lyapunov function for suggesting the studied
		system to be locally asymptotically stable at $x^*=1$. }		
\end{example}
	
	\section{Lyapunov Function PDEs for some CRNS with $\text{dim}\mathscr{S}\geq
		2$} For general CRNs with $\text{dim}\mathscr{S}\geq 2$, we are not
	able to prove that the Lyapunov Function PDEs (\ref{LyaPDEs}) plus
	(\ref{BoundaryCondition}) work validly in this paper. However, they
	are shown {valid} for some special CRNs with
	$\text{dim}\mathscr{S}\geq 2$.
	
	\subsection{CRNs of $\text{dim}\mathscr{S}\geq
		2$ Composed of a Complex Balanced CRN and a series of CRNs of
		$\text{dim}\mathscr{S}=1$} Consider a MAS
	{$(\mathcal{S},\mathcal{C},\mathcal{R},\mathcal{K})$} as a combination of a
	complex balanced MAS, labeled as
	{$(\mathcal{S}^{(0)},\mathcal{C}^{(0)},\mathcal{R}^{(0)},\mathcal{K}^{(0)})$}, and a
	few MASs of $1$-dimensional stoichiometric subspace, denoted by
	{$(\mathcal{S}^{(p)},\mathcal{C}^{(p)},\mathcal{R}^{(p)},\mathcal{K}^{(p)})$}
	($p={1,\cdots,\ell}$), respectively. These sub-networks are assumed
	to be independent each other. Namely, for
	$\forall~p,q\in\{0,\cdots,\ell\}$, if $p\neq q$, then
	$\mathcal{S}^{(p)}\cap\mathcal{S}^{(q)}=\emptyset$. We define this
	class of CRNs as ``Com-$\ell$Sub$1$" CRNs, and the corresponding MASs are named ``Com-$\ell$Sub$1$" MASs.
	
	In every sub-network
	{$(\mathcal{S}^{(p)},\mathcal{C}^{(p)},\mathcal{R}^{(p)})$}, let
	$n_p$, $r_p$ represent the number of species and of reactions,
	$v_{\cdot i^{(p)}}$ the reactant complex and $v'_{\cdot i^{(p)}}$
	the resultant complex of the $i$th reaction $(i=1,\cdots,r_p)$,
	respectively. Also, denote
	$$n=\sum_{p=0}^\ell n_p,~v_{\cdot i}^{(p)}=\bigotimes_{q=0}^{p-1} \mathbbold{0}_{n_{q}}
	\bigotimes v_{\cdot i^{(p)}} \bigotimes_{q=p+1}^{\ell}
	\mathbbold{0}_{n_{q}}~\text{and}~{v'_{\cdot
			i}}^{(p)}=\bigotimes_{q=0}^{p-1} \mathbbold{0}_{n_{q}} \bigotimes
	v'_{\cdot i^{(p)}} \bigotimes_{q=p+1}^{\ell} \mathbbold{0}_{n_{q}},$$
	where $\bigotimes$ is the Cartesian product, and $n_q=0$ if
	$q\textless 0$ or $q\textgreater \ell$, then the MAS
	{$(\mathcal{S},\mathcal{C},\mathcal{R},\mathcal{K})$} under consideration is
	expressed as
	\begin{equation}{\label{CRN s=n}}
	\mathcal{S}=\bigcup_{p=0}^{\ell}\mathcal{S}^{(p)},~~
	\mathcal{C}=\bigcup_{p=0}^{\ell}\bigcup_{i=1}^{r_{p}}\{v_{\cdot
		i}^{(p)},{v'_{\cdot i}}^{(p)}\}~\text{and}~
	\mathcal{R}=\bigcup_{p=0}^{\ell}\bigcup_{i=1}^{r_{p}}\{v_{\cdot
		i}^{(p)}\stackrel{k_i^{(p)}}{\longrightarrow} {v'_{\cdot i}}^{(p)}\}
	\end{equation}
	with the dynamics to be
	\begin{equation}{\label{kinetic equation s=n}}
	\dot x = \sum_{p=0}^{\ell} \sum_{i=1}^{r_{p}} k_{i}^{(p)}
	x^{v_{\cdot i}^{(p)}} \left({v'_{\cdot i}}^{(p)}-v_{\cdot
		i}^{(p)}\right),
	\end{equation}
	where the state
	$x=\bigotimes_{p=0}^{\ell}x^{(p)}\in\mathbb{R}^n_{\geq 0}$ and
	$x^{(p)}\in\mathbb{R}^{n_p}_{\geq 0}$ is the state of the mass action system
	$\{\mathcal{S}^{(p)},\mathcal{C}^{(p)},\mathcal{R}^{(p)},\mathcal{K}^{(p)}\}$. Note
	that for $\forall~p,q\in\{0,\cdots,\ell\}$ if $p\neq q$ then
	$$\left\{\bigcup_{i=1}^{r_{p}}\{v_{\cdot
		i}^{(p)}\stackrel{k_i^{(p)}}{\longrightarrow} {v'_{\cdot
			i}}^{(p)}\}\right\}\bigcap\left\{\bigcup_{i=1}^{r_{q}}\{v_{\cdot
		i}^{(q)}\stackrel{k_i^{(q)}}{\longrightarrow} {v'_{\cdot
			i}}^{(q)}\}\right\}=\emptyset.$$ Therefore, the number of reactions
	contained in the MAS of (\ref{CRN s=n}) is $r=\sum_{p=0}^\ell r_p$.
	
	In the following, we will expound that the Lyapunov Function PDEs
	induced by Com-$\ell$Sub$1$ MASs also work validly for stability
	analysis by generating a solution as the Lyapunov function.
	
	\begin{lemma}{\label{StoichiometricSubspace s=n}}
		The stoichiometric subspace $\mathscr{S}$ of a Com-$\ell$Sub1 MAS
		satisfies
		\begin{equation}{\label{eq stoichiometic subspace s=n }}
		\mathscr{S}=\bigotimes_{p=0}^{\ell}\mathscr{S}^{(p)}~~\text{and}~~\mathrm{dim}\mathscr{S}=\sum_{p=0}^\ell\mathrm{dim}\mathscr{S}^{(p)}=\mathrm{dim}\mathscr{S}^{(0)}+\ell,
		\end{equation}
		where $\mathscr{S}^{(p)}$ is the stoichiometric subspace of
		{$(\mathcal{S}^{(p)},\mathcal{C}^{(p)},\mathcal{R}^{(p)},\mathcal{K}^{(p)})$}.
	\end{lemma}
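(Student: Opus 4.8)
The plan is to unwind the definition of the stoichiometric subspace directly from the explicit Cartesian-product form of the reaction vectors, and then argue that the disjointness of the species sets forces the resulting span to split as a direct sum. First I would recall from Definition~2.2 that $\mathscr{S}=\mathrm{span}\{\,{v'_{\cdot i}}^{(p)}-v_{\cdot i}^{(p)}\,\}$, where the index $(p,i)$ ranges over $p=0,\dots,\ell$ and $i=1,\dots,r_p$, and substitute the embedding formula ${v'_{\cdot i}}^{(p)}-v_{\cdot i}^{(p)}=\bigotimes_{q=0}^{p-1}\mathbbold{0}_{n_{q}}\bigotimes\big(v'_{\cdot i^{(p)}}-v_{\cdot i^{(p)}}\big)\bigotimes_{q=p+1}^{\ell}\mathbbold{0}_{n_{q}}$. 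The key structural observation is that every reaction vector coming from sub-network $p$ is supported only on the $p$-th coordinate block of $\mathbb{R}^{n}=\mathbb{R}^{n_0}\times\cdots\times\mathbb{R}^{n_\ell}$, being zero in all other blocks.

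Next I would group the spanning set by sub-network and write $\mathscr{S}=\sum_{p=0}^{\ell}\mathscr{S}_p$, where $\mathscr{S}_p:=\mathrm{span}\{\,{v'_{\cdot i}}^{(p)}-v_{\cdot i}^{(p)}:i=1,\dots,r_p\,\}$. By the embedding formula, $\mathscr{S}_p$ is exactly the canonical image of $\mathscr{S}^{(p)}=\mathrm{span}\{v'_{\cdot i^{(p)}}-v_{\cdot i^{(p)}}\}$ inside the $p$-th block, i.e.\ the copy of $\mathscr{S}^{(p)}$ sitting in coordinates belonging to $\mathcal{S}^{(p)}$ with zeros elsewhere. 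Because the hypothesis $\mathcal{S}^{(p)}\cap\mathcal{S}^{(q)}=\emptyset$ for $p\neq q$ means the coordinate blocks are pairwise disjoint, a vector lying simultaneously in $\mathscr{S}_p$ and in $\sum_{q\neq p}\mathscr{S}_q$ must be supported both inside block $p$ and inside the union of the other blocks, hence is $\mathbbold{0}_{n}$. This gives $\mathscr{S}_p\cap\sum_{q\neq p}\mathscr{S}_q=\{\mathbbold{0}_{n}\}$, so the sum is direct, which is precisely the Cartesian-product identification $\mathscr{S}=\bigotimes_{p=0}^{\ell}\mathscr{S}^{(p)}$.

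For the dimension count I would invoke additivity of dimension over a direct sum to obtain $\mathrm{dim}\,\mathscr{S}=\sum_{p=0}^{\ell}\mathrm{dim}\,\mathscr{S}^{(p)}$, and then use that each sub-network $\{\mathcal{S}^{(p)},\mathcal{C}^{(p)},\mathcal{R}^{(p)}\}$ for $p=1,\dots,\ell$ has $\mathrm{dim}\,\mathscr{S}^{(p)}=1$ by construction, yielding $\mathrm{dim}\,\mathscr{S}=\mathrm{dim}\,\mathscr{S}^{(0)}+\ell$. I do not expect a genuine obstacle here: the content is entirely the coordinate-support bookkeeping, and the only step that deserves care is the directness argument, namely checking that the pairwise intersections of the embedded subspaces are trivial. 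The disjoint-species hypothesis is exactly what delivers this, so the main thing to get right is to phrase that intersection argument cleanly rather than to overcome any real difficulty.
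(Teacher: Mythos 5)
Your proposal is correct and follows essentially the same route as the paper: group the spanning set by sub-network, identify each group's span as the embedded copy of $\mathscr{S}^{(p)}$ in its coordinate block, observe that disjointness of the species sets makes the sum direct, and conclude the dimension formula from additivity together with $\mathrm{dim}\,\mathscr{S}^{(p)}=1$ for $p\geq 1$. The only difference is that you spell out the triviality of the pairwise intersections explicitly, whereas the paper simply asserts that the sum is a direct sum.
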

	
	\begin{proof}
		Since
		\begin{eqnarray*}
			\mathscr{S}&=&\text{span}\left\{
			\bigcup_{p=0}^{\ell}\bigcup_{i=1}^{r_{p}}\{v_{\cdot
				i}^{(p)}-{v'_{\cdot i}}^{(p)}\} \right\}\\
			&=&
			\sum_{p=0}^{\ell}\text{span}\left\{\bigcup_{i=1}^{r_{p}}\{v_{\cdot
				i}^{(p)}-{v'_{\cdot i}}^{(p)}\} \right\}\\
			&=& \sum_{p=0}^{l} \left(
			\bigotimes_{q=0}^{p-1} \mathbbold{0}_{n_{q}} \bigotimes \mathscr{S}^{(p)}
			\bigotimes_{q=p+1}^{\ell} \mathbbold{0}_{n_{q}}
			\right)\\
			&=&\bigoplus_{p=0}^{l} \left(
			\bigotimes_{q=0}^{p-1} \mathbbold{0}_{n_{q}} \bigotimes \mathscr{S}^{(p)}
			\bigotimes_{q=p+1}^{\ell} \mathbbold{0}_{n_{q}}
			\right),
		\end{eqnarray*}
		we have $\mathscr{S}=\bigotimes_{p=0}^{\ell}\mathscr{S}^{(p)}$ and
		$\mathrm{dim}\mathscr{S}=\sum_{p=0}^\ell\mathrm{dim}\mathscr{S}^{(p)}=\mathrm{dim}\mathscr{S}^{(0)}+\ell$.
		Here, $\bigoplus$ is the direct sum.
	\end{proof}
	
	\begin{lemma}{\label{lem boundary point s=n}}
		For any state $x\in\mathbb{R}^n_{\geq 0}$ of a Com-$\ell$Sub1 mass action system,
		if $x\in\partial_{(x+\mathscr{S})\cap\mathbb{R}^n_{\textgreater
				0}}$ $({(x+\mathscr{S})\cap\mathbb{R}^n_{\textgreater
				0}\neq \emptyset})$, then for $\forall~p\in\{0,\cdots,\ell\}$ we have
		$x^{(p)}\in\mathbb{R}^n_{\textgreater 0}$ or
		$x^{(p)}\in\partial_{(x^{(p)}+\mathscr{S}^{(p)})\cap\mathbb{R}^n_{\textgreater
				0}}$. Furthermore, there exists at least one $q\in\{0,\cdots,\ell\}$
		such that
		$x^{(q)}\in\partial_{(x^{(q)}+\mathscr{S}^{(q)})\cap\mathbb{R}^n_{\textgreater
				0}}$.
	\end{lemma}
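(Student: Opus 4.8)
The plan is to reduce everything to the Cartesian-product structure of the network, so that the boundary condition on $x$ decouples into independent conditions on the blocks $x^{(0)},x^{(1)},\dots,x^{(\ell)}$. The one identity that carries the whole argument is the factorization of the affine compatibility class, $x+\mathscr{S}=\bigotimes_{p=0}^{\ell}\big(x^{(p)}+\mathscr{S}^{(p)}\big)$, which follows at once from $x=\bigotimes_{p=0}^{\ell}x^{(p)}$ together with $\mathscr{S}=\bigotimes_{p=0}^{\ell}\mathscr{S}^{(p)}$ proved in Lemma \ref{StoichiometricSubspace s=n}. Because the coordinates split into the blocks $\mathbb{R}^{n_0},\dots,\mathbb{R}^{n_\ell}$ and both positivity and nonnegativity are coordinatewise, the orthants also factor, $\mathbb{R}^n_{>0}=\bigotimes_{p=0}^{\ell}\mathbb{R}^{n_p}_{>0}$ and $\mathbb{R}^n_{\geq 0}=\bigotimes_{p=0}^{\ell}\mathbb{R}^{n_p}_{\geq 0}$. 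Intersecting, the positive compatibility class factors as $(x+\mathscr{S})\cap\mathbb{R}^n_{>0}=\bigotimes_{p=0}^{\ell}\big((x^{(p)}+\mathscr{S}^{(p)})\cap\mathbb{R}^{n_p}_{>0}\big)$, and this is the tool I would set up first.

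Next I would extract the consequences of the two hypotheses. From $(x+\mathscr{S})\cap\mathbb{R}^n_{>0}\neq\emptyset$ and the product factorization, each factor $(x^{(p)}+\mathscr{S}^{(p)})\cap\mathbb{R}^{n_p}_{>0}$ must be nonempty, since a finite Cartesian product is nonempty precisely when every factor is. This is the point to record carefully, because it is exactly what makes each sub-boundary $\partial_{(x^{(p)}+\mathscr{S}^{(p)})\cap\mathbb{R}^{n_p}_{>0}}$ a legitimate object. The hypothesis $x\in\partial_{(x+\mathscr{S})\cap\mathbb{R}^n_{>0}}$ unwinds, by the definition of $\partial$, to the single statement that $x\notin\mathbb{R}^n_{>0}$, i.e.\ at least one entry of $x$ vanishes.

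With these in hand both assertions are a dichotomy on each block. Since $x\in\mathbb{R}^n_{\geq 0}$ forces $x^{(p)}\in\mathbb{R}^{n_p}_{\geq 0}$, either $x^{(p)}$ has no vanishing entry, whence $x^{(p)}\in\mathbb{R}^{n_p}_{>0}$, or it has a vanishing entry, whence $x^{(p)}\notin\mathbb{R}^{n_p}_{>0}$ while trivially $x^{(p)}\in(x^{(p)}+\mathscr{S}^{(p)})\cap\mathbb{R}^{n_p}_{\geq 0}$; combining this with the nonemptiness just noted and the definition of $\partial$ yields $x^{(p)}\in\partial_{(x^{(p)}+\mathscr{S}^{(p)})\cap\mathbb{R}^{n_p}_{>0}}$, which is the first claim. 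For the existence claim I would locate a vanishing entry of $x$, which exists because $x\notin\mathbb{R}^n_{>0}$; that entry lies in exactly one block, say $x^{(q)}$, so $x^{(q)}\notin\mathbb{R}^{n_q}_{>0}$, and the same dichotomy places $x^{(q)}$ on $\partial_{(x^{(q)}+\mathscr{S}^{(q)})\cap\mathbb{R}^{n_q}_{>0}}$.

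I do not expect a genuine obstacle: the entire content is the product decomposition, and the argument is otherwise set-theoretic. The only things demanding care are bookkeeping, namely keeping each sub-orthant $\mathbb{R}^{n_p}_{>0}$ aligned with the correct coordinate block and transporting the nonemptiness hypothesis through the factorization so that every sub-boundary is well defined, and the mild notational clash between the $\mathbb{R}^n_{\geq 0}$ appearing in the definition of $\partial$ and the $\mathbb{R}^n_{>0}$ written in its subscript, which I would resolve by reading $\partial_{(\cdot)\cap\mathbb{R}^{n}_{>0}}$ as the relative boundary of the positive class inside the affine set $x+\mathscr{S}$.
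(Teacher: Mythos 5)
Your proposal is correct and follows essentially the same route as the paper: both rest on the factorization $(x+\mathscr{S})\cap\mathbb{R}^n_{>0}=\bigotimes_{p=0}^{\ell}\bigl((x^{(p)}+\mathscr{S}^{(p)})\cap\mathbb{R}^{n_p}_{>0}\bigr)$ from Lemma \ref{StoichiometricSubspace s=n}, deduce nonemptiness of each factor, and then argue blockwise. The only cosmetic difference is that the paper obtains the final existence claim by contradiction (if every block were strictly positive then $x$ would be) whereas you directly locate a vanishing coordinate in some block; these are the same argument in contrapositive form.
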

	
	\begin{proof}
		Since $(x+\mathscr{S})\cap\mathbb{R}^n_{\textgreater
			0}\neq\emptyset${, based} on \textit{Lemma \ref{StoichiometricSubspace s=n}}, we get
		\begin{eqnarray*}
			(x+\mathscr{S})\bigcap\mathbb{R}^{n}_{\textgreater 0}
			&=&\left(\bigotimes_{p=0}^{\ell} x^{(p)} + \bigotimes_{p=0}^{\ell}
			\mathscr{S}^{(p)} \right)\bigcap
			\left(\bigotimes_{p=0}^{\ell}\mathbb{R}^{n_{p}}_{\textgreater 0}\right)\\
			&=&\bigotimes_{p=0}^{\ell}\left[(x^{(p)}+\mathscr{S}^{(p)})\bigcap\mathbb{R}^{n_{p}}_{\textgreater
				0}\right]\neq\emptyset.
		\end{eqnarray*}
		Hence,
		$(x^{(p)}+\mathscr{S}^{(p)})\bigcap\mathbb{R}^{n_{p}}_{\textgreater
			0}\neq\emptyset$, i.e., $x^{(p)}\in\mathbb{R}^{n_p}_{\textgreater
			0}$ or
		$x^{(p)}\in\partial_{(x^{(p)}+\mathscr{S}^{(p)})\cap\mathbb{R}^n_{\textgreater
				0}}$ for $\forall~p$.
		
		Besides, if $x^{(p)}\in\mathbb{R}^{n_p}_{\textgreater 0}$ for
		$\forall~p$, then
		$x=\bigotimes_{p=0}^{\ell}x^{(p)}\in\mathbb{R}^{n}_{>0}$, which
		contradicts with the condition
		$x\in\partial_{(x+\mathscr{S})\cap\mathbb{R}^n_{\textgreater 0}}$.
		Thus, there exists at least one $q\in\{0,\cdots,\ell\}$ such that
		$x^{(q)}\in\partial_{(x^{(q)}+\mathscr{S}^{(q)})\cap\mathbb{R}^{n_q}_{\textgreater
				0}}$.
	\end{proof}
	
	\begin{corollary}\label{BoundaryPointGao}
		For any boundary point $\bar{x}\in
		\partial_{(\bar{x}+\mathscr{S})\bigcap\mathbb{R}^{n}_{\textgreater 0}}$ $({(\bar{x}+\mathscr{S})\cap\mathbb{R}^n_{\textgreater
				0}\neq \emptyset})$ of a
		Com-$\ell$Sub1 MAS, there exists an index set
		$\mathbb{P}_{\bar{x}}\subseteq \{0,\cdots,\ell\}$ such that if
		$p\in\mathbb{P}_{\bar{x}}$ then
		$x^{(p)}\in\partial_{(x^{(p)}+\mathscr{S}^{(p)})\cap\mathbb{R}^{n_q}_{\textgreater
				0}}$, which is denoted by $\bar{x}^{(p)}$ in the following.
	\end{corollary}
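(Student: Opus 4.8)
The plan is to read off the index set $\mathbb{P}_{\bar{x}}$ directly from \textit{Lemma 5.2}. First I would define
$$\mathbb{P}_{\bar{x}}=\left\{p\in\{0,\cdots,\ell\}~\Big|~\bar{x}^{(p)}\in\partial_{(\bar{x}^{(p)}+\mathscr{S}^{(p)})\cap\mathbb{R}^{n_p}_{\textgreater 0}}\right\},$$
i.e.\ the collection of sub-network indices at which the corresponding factor $\bar{x}^{(p)}$ of the decomposition $\bar{x}=\bigotimes_{p=0}^{\ell}\bar{x}^{(p)}$ lies on the boundary of its own positive stoichiometric compatibility class. With this definition the defining property of the corollary---that $p\in\mathbb{P}_{\bar{x}}$ forces $\bar{x}^{(p)}$ to be a boundary point---holds tautologically, so the only content to establish is that $\mathbb{P}_{\bar{x}}$ is a genuine and, in particular, nonempty subset of $\{0,\cdots,\ell\}$.

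Next I would invoke \textit{Lemma 5.2}. Since by hypothesis $\bar{x}\in\partial_{(\bar{x}+\mathscr{S})\cap\mathbb{R}^{n}_{\textgreater 0}}$ with $(\bar{x}+\mathscr{S})\cap\mathbb{R}^n_{\textgreater 0}\neq\emptyset$, the lemma supplies, for every $p\in\{0,\cdots,\ell\}$, the dichotomy that either $\bar{x}^{(p)}\in\mathbb{R}^{n_p}_{\textgreater 0}$ or $\bar{x}^{(p)}\in\partial_{(\bar{x}^{(p)}+\mathscr{S}^{(p)})\cap\mathbb{R}^{n_p}_{\textgreater 0}}$. Thus $\mathbb{P}_{\bar{x}}$ is exactly the set of indices falling into the second alternative and is therefore well-defined. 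The ``furthermore'' clause of \textit{Lemma 5.2} then yields at least one index $q$ with $\bar{x}^{(q)}$ on its own boundary, which shows $q\in\mathbb{P}_{\bar{x}}$ and hence $\mathbb{P}_{\bar{x}}\neq\emptyset$, completing the argument.

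There is no substantive obstacle here: the corollary is essentially a bookkeeping restatement of \textit{Lemma 5.2}, repackaging the per-factor dichotomy into a single index set and fixing the notation $\bar{x}^{(p)}$ for the boundary factors to be used in the subsequent construction. The only point deserving a line of care is that nonemptiness of $\mathbb{P}_{\bar{x}}$ is \emph{not} automatic from the dichotomy alone but relies on the second (``furthermore'') assertion of the lemma; I would make this dependence explicit so that later references to a nonempty $\mathbb{P}_{\bar{x}}$ are fully justified.
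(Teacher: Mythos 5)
Your proposal is correct and matches the paper's (implicit) argument: the corollary is stated without a separate proof precisely because it is the immediate bookkeeping consequence of \textit{Lemma 5.2} that you describe, with $\mathbb{P}_{\bar{x}}$ defined as the set of indices whose factor lies on its own boundary. Your explicit remark that nonemptiness of $\mathbb{P}_{\bar{x}}$ rests on the ``furthermore'' clause of the lemma is a worthwhile clarification, since the paper later uses that nonemptiness in the proof of \textit{Theorem 5.6}.
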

	
	\begin{lemma}{\label{lem naive boundary complex set s=n}}
		For a Com-$\ell$Sub1 MAS, let $\bar{x}$ represent any boundary point
		of any positive stoichiometric compatibility class, the naive
		boundary complex set of $\bar{x}$ is
		\begin{equation}{\label{naive boundary complex set s=n}}
		\bar{\mathcal{C}}_{\bar{x}}=\bigcup_{p\in \mathbb{P}_{\bar{x}}}
		\left\{
		\bigotimes_{q=0}^{p-1} \mathbbold{0}_{n_{q}} \bigotimes z
		\bigotimes_{q=p+1}^{\ell} \mathbbold{0}_{n_{q}}
		\big| z\in \bar{\mathcal{C}}_{\bar{x}^{(p)}}^{(p)}
		\right\}
		\bigcup_{p\notin \mathbb{P}_{\bar{x}}} \left\{
		\bigotimes_{q=0}^{p-1} \mathbbold{0}_{n_{q}} \bigotimes z
		\bigotimes_{q=p+1}^{\ell} \mathbbold{0}_{n_{q}}
		\big| z\in \mathcal{C}^{(p)}
		\right\},
		\end{equation}
		where $\bar{\mathcal{C}}_{\bar{x}^{(p)}}^{(p)}$ is the naive
		boundary complex set of $\bar{x}^{(p)}$ for {$(\mathcal{S}^{(p)},\mathcal{C}^{(p)},\mathcal{R}^{(p)},\mathcal{K}^{(p)})$}.
	\end{lemma}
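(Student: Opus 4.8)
The plan is to reduce the $\epsilon$-inequality defining the naive boundary complex set to a support-containment condition, and then exploit the block structure of the complexes of a Com-$\ell$Sub1 CRN. First I would unpack the definition (\ref{naive boundary complex set}): a complex $z\in\mathcal{C}$ lies in $\bar{\mathcal{C}}_{\bar{x}}$ exactly when there is an $\epsilon>0$ with $\bar{x}_j\geq\epsilon z_j$ for every $j$. If $\bar{x}_j=0$ while $z_j>0$, this fails for all $\epsilon>0$; conversely, if $z_j>0$ forces $\bar{x}_j>0$ for each $j$, one may take $\epsilon=\min_{j:\,z_j>0}\{\bar{x}_j/z_j\}>0$ (any $\epsilon$ if $z=\mathbbold{0}_n$). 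Hence membership in $\bar{\mathcal{C}}_{\bar{x}}$ is equivalent to the support inclusion $\{j\mid z_j>0\}\subseteq\{j\mid\bar{x}_j>0\}$, i.e. $z_j=0$ for every coordinate $j$ at which $\bar{x}_j=0$.

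Next I would invoke the structural form of the complexes in (\ref{CRN s=n}): every $z\in\mathcal{C}$ arises from exactly one sub-network $p$ and is supported only on the $p$th coordinate block, namely $z=\bigotimes_{q=0}^{p-1}\mathbbold{0}_{n_q}\bigotimes\tilde{z}\bigotimes_{q=p+1}^{\ell}\mathbbold{0}_{n_q}$ with $\tilde{z}\in\mathcal{C}^{(p)}$. For such a $z$ the support-containment condition is automatically satisfied on every block $q\neq p$, where $z$ vanishes, and reduces to a condition involving block $p$ alone: $\tilde{z}_j=0$ for every $j$ with $\bar{x}^{(p)}_j=0$. This is precisely the same support criterion, now read inside block $p$ against $\bar{x}^{(p)}$.

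Then I would split according to \textit{Corollary 5.3}, which records that the blocks of a boundary point $\bar{x}$ are either strictly positive (when $p\notin\mathbb{P}_{\bar{x}}$) or a boundary point $\bar{x}^{(p)}$ of the sub-network (when $p\in\mathbb{P}_{\bar{x}}$). When $p\notin\mathbb{P}_{\bar{x}}$ we have $\bar{x}^{(p)}\in\mathbb{R}^{n_p}_{>0}$, so $\bar{x}^{(p)}$ has no zero coordinates and the block-$p$ criterion holds vacuously; thus every $\tilde{z}\in\mathcal{C}^{(p)}$ contributes, giving the second union in (\ref{naive boundary complex set s=n}). When $p\in\mathbb{P}_{\bar{x}}$, the block-$p$ criterion is by the first paragraph exactly the statement that $\tilde{z}$ belongs to the naive boundary complex set $\bar{\mathcal{C}}_{\bar{x}^{(p)}}^{(p)}$ of the sub-network at $\bar{x}^{(p)}$, giving the first union. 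Collecting the contributions over all $p\in\{0,\cdots,\ell\}$ reproduces (\ref{naive boundary complex set s=n}) verbatim.

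The argument is essentially bookkeeping, so I expect no deep obstacle; the only points demanding care are the equivalence between the $\epsilon$-inequality and support inclusion in the first paragraph (which is what lets the definition transfer cleanly between the full network and each isolated block) and the exhaustiveness of the case split, the latter being furnished directly by \textit{Lemma 5.2} and \textit{Corollary 5.3}.
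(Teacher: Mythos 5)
Your proposal is correct and follows essentially the same route as the paper: both reduce the $\epsilon$-condition of (\ref{naive boundary complex set}) to a condition on the single block on which each complex is supported, and then split the union over $p$ according to whether $p\in\mathbb{P}_{\bar{x}}$ (recovering $\bar{\mathcal{C}}_{\bar{x}^{(p)}}^{(p)}$) or $p\notin\mathbb{P}_{\bar{x}}$ (where the condition is vacuous and all of $\mathcal{C}^{(p)}$ enters). Your explicit restatement of the $\epsilon$-inequality as a support-inclusion criterion is a harmless elaboration of a step the paper leaves implicit in its chain of set equalities.
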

	
	\begin{proof}
		According to the definition of the naive boundary complex set in
		(\ref{naive boundary complex set}), the one for the Com-$\ell$Sub1
		MAS is
		\begin{eqnarray*}
			\bar{\mathcal{C}}_{\bar{x}}&=& \bigcup_{p=0}^{\ell}\bigg[ \left\{
			v_{\cdot i}^{(p)}~|~\forall \{j\}_{1}^{n},\exists
			\epsilon\textgreater 0,\bar{x}_{j}\geq\epsilon v_{ji}^{(p)} \right\}
			\bigcup\left\{ {v'_{\cdot i}}^{(p)}~ |~\forall \{j\}_{1}^{n},\exists
			\epsilon\textgreater 0, \bar{x}_{j}\geq\epsilon {v'_{ji}}^{(p)}
			\right\}\bigg]\\
			&=& \bigcup_{p=0}^{\ell}\Bigg[ \left\{
			\bigotimes_{q=0}^{p-1}\mathbbold{0}_{n_q}\bigotimes v_{\cdot
				i^{(p)}}\bigotimes_{q=p+1}^{\ell}\mathbbold{0}_{n_q}~
			|~\forall\{j\}_{1}^{n_p},\exists \epsilon>0, x_{j}^{(p)}\geq\epsilon
			v_{{j i}^{(p)}}
			\right\}\\
			&&~~~~~~~\bigcup\left\{
			\bigotimes_{q=0}^{p-1}\mathbbold{0}_{n_q}\bigotimes v'_{\cdot
				i^{(p)}}\bigotimes_{q=p+1}^{\ell}\mathbbold{0}_{n_q}~
			|~\forall\{j\}_{1}^{n_p},\exists \epsilon>0, x_{j}^{(p)}\geq\epsilon
			{v'_{j i^{(p)}}}
			\right\}\Bigg]\\
			&=& \bigcup_{p\in \mathbb{P}_{\bar{x}}} \left\{
			\bigotimes_{q=0}^{p-1} \mathbbold{0}_{n_{q}} \bigotimes z
			\bigotimes_{q=p+1}^{\ell} \mathbbold{0}_{n_{q}}
			\big| z\in \bar{\mathcal{C}}_{\bar{x}^{(p)}}^{(p)}
			\right\}
			\bigcup_{p\notin \mathbb{P}_{\bar{x}}} \left\{
			\bigotimes_{q=0}^{p-1} \mathbbold{0}_{n_{q}} \bigotimes z
			\bigotimes_{q=p+1}^{\ell} \mathbbold{0}_{n_{q}}
			\big| z\in \mathcal{C}^{(p)}
			\right\}.
		\end{eqnarray*}
		Therefore, the result is true.
	\end{proof}
	
	\begin{lemma}{\label{lem LyaPDEs s=n}}
		For a Com-$\ell$Sub1 MAS, if the boundary complex set is chosen as
		the naive one given in Lemma \ref{lem naive boundary complex set s=n}, the Lyapunov Function PDEs are
		\begin{equation}{\label{LyaPDE s=n}}
		\sum_{p=0}^{\ell} \left( \sum_{i=1}^{r_{p}} \varTheta_1(i,p) -
		\sum_{i=1}^{r_{p}} \varTheta_2(i,p) \right) =0
		\end{equation}
		and
		\begin{eqnarray}{\label{BoundaryCondition s=n}}
		&&\sum_{p\in \mathbb{P}_{\bar{x}}} \lim_{
			\begin{tiny}
			\begin{array}{c}
			x^{(p)} \to \bar{x}^{(p)}\\
			x^{(p)}\in
			(\bar{x}^{(p)}+\mathscr{S}^{(p)})\cap\mathbb{R}^{n_{p}}_{\textgreater
				0}
			\end{array}
			\end{tiny}
		}  \Bigg( \sum_{\{i|v_{\cdot
				i^{(p)}}\in\bar{\mathcal{C}}_{\bar{x}^{(p)}}^{(p)}\}}
		\varTheta_1(i,p)- \sum_{\{i|v'_{\cdot
				i^{(p)}}\in\bar{\mathcal{C}}_{\bar{x}^{(p)}}^{(p)}\}}
		\varTheta_2(i,p) \Bigg)\\
		&+&\sum_{p\notin \mathbb{P}_{\bar{x}}} \Bigg( \sum_{i=1}^{r_{p}}
		\varTheta_1(i,p) - \sum_{i=1}^{r_{p}}\varTheta_2(i,p) \Bigg)=0,\notag
		\end{eqnarray}
		where $$\varTheta_1(i,p)=k_{i}^{(p)}{x^{(p)}}^{v_{\cdot
				i^{(p)}}},~~~~~ \varTheta_2(i,p)=k_{i}^{(p)}{x^{(p)}}^{v_{\cdot
				i^{(p)}}} \exp\left\{(v'_{\cdot i^{(p)}}-v_{\cdot i^{(p)}})^{\top}
		\frac{\partial f(x)}{\partial x^{(p)}} \right\},$$
		$x=\bigotimes_{p=0}^{\ell} x^{(p)} \in \mathbb{R}^{n}_{>0}$, and
		$\bar{x}$ represents any boundary point of any positive
		stoichiometric compatibility class.
	\end{lemma}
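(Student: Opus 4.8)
The plan is to treat this lemma as a direct substitution: insert the Com-$\ell$Sub1 structure (\ref{CRN s=n}) and the naive boundary complex set of \textit{Lemma 5.4} into the general Lyapunov Function PDEs (\ref{LyaPDEs}) plus (\ref{BoundaryCondition}), and then simplify using the disjointness of the species belonging to distinct sub-networks. The entire argument is computational; the only structural inputs are the product decomposition of $\mathscr{S}$ from \textit{Lemma 5.1} and the membership description of $\bar{\mathcal{C}}_{\bar{x}}$ from \textit{Lemma 5.4}.

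First I would rewrite the PDE (\ref{LyaPDEs}). The reaction index set $\{1,\dots,r\}$ is the disjoint union of the sub-networks' reaction sets, so $\sum_{i=1}^{r}=\sum_{p=0}^{\ell}\sum_{i=1}^{r_p}$. For a reaction of sub-network $p$ the embedded reactant $v_{\cdot i}^{(p)}=\bigotimes_{q=0}^{p-1}\mathbbold{0}_{n_q}\bigotimes v_{\cdot i^{(p)}}\bigotimes_{q=p+1}^{\ell}\mathbbold{0}_{n_q}$ is supported only on the $p$th block; writing $x=\bigotimes_{p=0}^{\ell}x^{(p)}$ and using $0^{0}=1$, every coordinate outside that block contributes a factor $1$, so $x^{v_{\cdot i}^{(p)}}=\left(x^{(p)}\right)^{v_{\cdot i^{(p)}}}$ and $k_i^{(p)}x^{v_{\cdot i}^{(p)}}=\varTheta_1(i,p)$. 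Likewise $v'_{\cdot i}^{(p)}-v_{\cdot i}^{(p)}$ is supported on the $p$th block, whence $\left(v'_{\cdot i}^{(p)}-v_{\cdot i}^{(p)}\right)^{\top}\nabla f(x)=\left(v'_{\cdot i^{(p)}}-v_{\cdot i^{(p)}}\right)^{\top}\frac{\partial f(x)}{\partial x^{(p)}}$, so the exponential summand becomes $\varTheta_2(i,p)$. Summing over $p$ gives (\ref{LyaPDE s=n}) at once.

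Next I would treat the boundary condition. By \textit{Lemma 5.4}, $\bar{\mathcal{C}}_{\bar{x}}$ contains the embedded complexes of $\bar{\mathcal{C}}^{(p)}_{\bar{x}^{(p)}}$ for $p\in\mathbb{P}_{\bar{x}}$ and all embedded complexes of $\mathcal{C}^{(p)}$ for $p\notin\mathbb{P}_{\bar{x}}$. Hence a reactant $v_{\cdot i}^{(p)}$ lies in $\bar{\mathcal{C}}_{\bar{x}}$ exactly when $p\notin\mathbb{P}_{\bar{x}}$, or when $p\in\mathbb{P}_{\bar{x}}$ with $v_{\cdot i^{(p)}}\in\bar{\mathcal{C}}^{(p)}_{\bar{x}^{(p)}}$ (and similarly for resultants). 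Splitting the two sums in (\ref{BoundaryCondition}) along this criterion and reusing the monomial and gradient decoupling from the previous step rewrites the bracketed expression as the $p$-indexed sum appearing in (\ref{BoundaryCondition s=n}). To position the limit I would invoke the product form $(\bar{x}+\mathscr{S})\cap\mathbb{R}^n_{>0}=\bigotimes_{p=0}^{\ell}\left[(\bar{x}^{(p)}+\mathscr{S}^{(p)})\cap\mathbb{R}^{n_p}_{>0}\right]$ from \textit{Lemmas 5.1} and \textit{5.2}, so that $x\to\bar{x}$ in the class is equivalent to the simultaneous convergences $x^{(p)}\to\bar{x}^{(p)}$ in the respective sub-classes. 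For $p\notin\mathbb{P}_{\bar{x}}$ the limit point $\bar{x}^{(p)}$ is interior, where $\varTheta_1,\varTheta_2$ are continuous, so those blocks contribute no singularity and their explicit limit may be dropped; for $p\in\mathbb{P}_{\bar{x}}$ the limit persists as $x^{(p)}\to\bar{x}^{(p)}$, yielding (\ref{BoundaryCondition s=n}).

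The most delicate step is distributing the single limit $x\to\bar{x}$ across the blocks, since $\varTheta_2(i,p)$ depends on $\partial f/\partial x^{(p)}$ and, a priori, on all coordinates of $x$ through $\nabla f$. The disjoint-species hypothesis is exactly what rescues this: the product decomposition of the compatibility class forces each $x^{(p)}\to\bar{x}^{(p)}$ independently, so the limit of the sum splits into per-block limits, with the interior blocks ($p\notin\mathbb{P}_{\bar{x}}$) factoring out as continuous evaluations. Everything remaining is routine bookkeeping of the Cartesian-product embeddings.
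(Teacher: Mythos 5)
Your proposal is correct and follows essentially the same route as the paper: a direct substitution of the Com-$\ell$Sub1 structure into (\ref{LyaPDEs})--(\ref{BoundaryCondition}), using the block support of $v_{\cdot i}^{(p)}$ to decouple the monomials and the gradient, exactly as in the paper's computation of $x^{v_{\cdot i}^{(p)}}={x^{(p)}}^{v_{\cdot i^{(p)}}}$ and $({v'_{\cdot i}}^{(p)}-v_{\cdot i}^{(p)})^{\top}\nabla f(x)=(v'_{\cdot i^{(p)}}-v_{\cdot i^{(p)}})^{\top}\partial f(x)/\partial x^{(p)}$. The only difference is that you spell out the boundary-condition bookkeeping (splitting the sums via \textit{Lemma 5.4} and distributing the limit over blocks using the product form of the compatibility class), whereas the paper asserts this step in a single sentence; your added care is compatible with, and slightly more explicit than, the paper's argument.
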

	
	\begin{proof}
		Referring to the Lyapunov Function PDE of (\ref{LyaPDEs}), we can
		write the version for the case of a Com-$\ell$Sub1 MAS to be
		\begin{equation*}
		\sum_{p=0}^{\ell} \left( \sum_{i=1}^{r_{p}} k_{i}^{(p)} x^{v_{\cdot
				i}^{(p)}} - \sum_{i=1}^{r_{p}} k_{i}^{(p)} x^{v_{\cdot i}^{(p)}}
		\exp\left\{({v'_{\cdot i}}^{(p)}-v_{\cdot i}^{(p)})^{\top} \nabla
		f(x) \right\} \right) =0.
		\end{equation*}
		Since $x=\bigotimes_{q=0}^{\ell}
		x^{(q)}\in\mathbb{R}^n_{\textgreater 0}$ and $v_{\cdot
			i}^{(p)}=\bigotimes_{q=0}^{p-1} \mathbbold{0}_{n_{q}} \bigotimes
		v_{\cdot i^{(p)}} \bigotimes_{q=p+1}^{\ell} \mathbbold{0}_{n_{q}}$,
		we have
		\begin{equation*}
		x^{v_{\cdot i}^{(p)}} = \left(\bigotimes_{q=0}^{\ell}x^{(q)} \right)
		^{\bigotimes_{q=0}^{p-1}\mathbbold{0}_{n_{q}}\bigotimes v_{\cdot
				i^{(p)}}\bigotimes_{q=p+1}^{\ell}\mathbbold{0}_{n_{q}}}
		={x^{(p)}}^{v_{\cdot i^{(p)}}}.
		\end{equation*}
		Also, we have
		\begin{eqnarray*}
			({v'_{\cdot i}}^{(p)}-v_{\cdot i}^{(p)})^{\top} \nabla f(x) &=&
			\left(\bigotimes_{q=0}^{p-1}\mathbbold{0}_{n_{q}}
			\bigotimes\left(v'_{\cdot i^{(p)}}-v_{\cdot
				i^{(p)}}\right)^{\top}
			\bigotimes_{q=p+1}^{\ell}\mathbbold{0}_{n_{q}}
			\right)^{\top}
			\left(
			\bigotimes_{q=0}^{\ell}\frac{\partial f(x)}{\partial x^{(p)}}
			\right)\\
			&=&\left(v'_{\cdot i^{(p)}}-v_{\cdot i^{(p)}}\right)^{\top}
			{\frac{\partial f(x)}{\partial x^{(p)}}}.
		\end{eqnarray*}
		Hence, we get (\ref{LyaPDE s=n}).
		
		By choosing (\ref{naive boundary complex set s=n}) as the boundary
		complex set and referring to (\ref{BoundaryCondition}), we can write
		the boundary condition for the Lyapunov function PDE (\ref{LyaPDE
			s=n}) to be (\ref{BoundaryCondition s=n}).
	\end{proof}
	
	Clearly, the Lyapunov Function PDEs (\ref{LyaPDE s=n}) and
	(\ref{BoundaryCondition s=n}) for a Com-$\ell$Sub1 MAS are a
	combination of those PDEs of all sub-systems
	{$(\mathcal{S}^{(p)},\mathcal{C}^{(p)},\mathcal{R}^{(p)},\mathcal{K}^{(p)})_{p=0}^\ell$}.
	An immediate idea is to set a solution of the current Lyapunov
	Function PDEs also as a combination of the solutions obtained from
	the Lyapunov Function PDEs of all sub-systems.
	
	\begin{theorem}{\label{thm solution s=n}}
		For a Com-$\ell$Sub1 mass action system, the sub-system
		{$(\mathcal{S}^{(0)},\mathcal{C}^{(0)},\mathcal{R}^{(0)},\mathcal{K}^{(0)})$} is
		assumed to admit a positive complex balanced equilibrium while every
		other sub-system
		{$(\mathcal{S}^{(p)},\mathcal{C}^{(p)},\mathcal{R}^{(p)},\mathcal{K}^{(p)})$}
		($p={1,\cdots,\ell}$) is supposed to have a positive equilibrium.
		Further let every sub-network from $p=1$ to $\ell$ possess the naive
		boundary complex set $\bar{\mathcal{C}}_{\bar{x}^{(p)}}^{(p)}$ as
		the respective boundary complex set, and also,
		$\bar{\mathcal{C}}_{\bar{x}^{(p)}}^{(p)}=\emptyset$ or $\bar{\mathcal{C}}_{\bar{x}^{(p)}}^{(p)}$ includes at least a reactant
		complex and a resultant complex. Then the Lyapunov Function PDEs
		(\ref{LyaPDE s=n}) and (\ref{BoundaryCondition s=n}) for this
		Com-$\ell$Sub1 MAS admit a twice continuous differentiable solution in the form of
		\begin{equation}{\label{solution s=n}}
		f(x)=\sum_{p=0}^{\ell} f_{p}(x^{(p)}),
		\end{equation}
		where $x=\bigotimes_{p=0}^{\ell}
		x^{(p)}\in\mathbb{R}^n_{\textgreater 0}$, and $f_{p}(x^{(p)})$ is a
		solution defined by (\ref{Gibbs}) in case of $p=0$ and
		by (\ref{SolutionOfS1}) in case of others $p$ for the Lyapunov Function
		PDEs of every sub-network
		{$(\mathcal{S}^{(p)},\mathcal{C}^{(p)},\mathcal{R}^{(p)},\mathcal{K}^{(p)})$.}
	\end{theorem}
	
	\begin{proof}
		According to \textit{Theorems \ref{thm solution of complex balanced crn}} and \textit{\ref{f(x)ExistenceS1}}, under the known
		conditions the Lyapunov Function PDEs of every sub-system included
		in the Com-$\ell$Sub1 MAS have a twice continuous differentiable solution defined by (\ref{Gibbs})
		in case of $p=0$ and (\ref{SolutionOfS1}) in case of others $p$ in
		the area $\mathbb{R}^{n_p}_{\textgreater 0}$. Denote these solutions
		by $f_p(x^{(p)})$ from $p=0$ to $\ell$ respectively, and further
		substitute each one into the corresponding Lyapunov Function PDEs,
		then we get
		\begin{equation*}
		\sum_{i=1}^{r_{p}} k_{i}^{(p)}{x^{(p)}}^{v_{\cdot i^{(p)}}} -
		\sum_{i=1}^{r_{p}} k_{i}^{(p)}{x^{(p)}}^{v_{\cdot i^{(p)}}}
		\exp\left\{({v'_{\cdot i^{(p)}}}-v_{\cdot i^{(p)}})^{\top} \nabla
		f_p(x^{(p)}) \right\} =0.
		\end{equation*}
		Combining the sum of these equations from $p=0$ to $\ell$ and the
		fact that $f(x)=\sum_{p=0}^{\ell} f_{p}(x^{(p)})$ leads to
		$\frac{\partial f(x)}{\partial x^{(p)}}=\nabla f_{p}(x^{(p)})$ will
		yield the current $f(x)$ with $x=\bigotimes_{p=0}^{\ell}
		x^{(p)}\in\mathbb{R}^n_{\textgreater 0}$ satisfying the Lyapunov
		Function PDE of (\ref{LyaPDE s=n}).
		
		Consider any boundary point $\bar{x}$ of any positive stoichiometric
		compatibility class for this network system. According to \textit{Corollary
			\ref{BoundaryPointGao}}, there exists a nonempty $\mathbb{P}_{\bar{x}}\subseteq
		\{0,\cdots,\ell\}$ so that when $p\in \mathbb{P}_{\bar{x}}$ the
		$p$th entry of $\bar{x}$ is a boundary point of a certain positive
		stoichiometric compatibility class of the sub-system
		{$(\mathcal{S}^{(p)},\mathcal{C}^{(p)},\mathcal{R}^{(p)},\mathcal{K}^{(p)})$}. For
		those $p\in \mathbb{P}_{\bar{x}}$ since $f_{p}(x^{(p)})$ satisfies
		the boundary condition (\ref{BoundaryConditionS1}), we have
		\begin{eqnarray*}
			&&\lim_{
				\begin{tiny}
					\begin{array}{c}
						x^{(p)} \to \bar{x}^{(p)}\\
						x^{(p)}\in
						(\bar{x}^{(p)}+\mathscr{S}^{(p)})\cap\mathbb{R}^{n_{p}}_{\textgreater
							0}
					\end{array}
				\end{tiny}
			}\sum_{\{i|v_{\cdot
					i^{(p)}}\in\bar{\mathcal{C}}_{\bar{x}^{(p)}}^{(p)}\}}\varTheta_1(i,p)-
			\sum_{\{i|v'_{\cdot
					i^{(p)}}\in\bar{\mathcal{C}}_{\bar{x}^{(p)}}^{(p)}\}}\varTheta_2(i,p)
			=0.
		\end{eqnarray*}
		Namely, the first term in the left hand of (\ref{BoundaryCondition
			s=n}) is equal to $0$. The second term is also equal to $0$ since
		for those $p\notin \mathbb{P}_{\bar{x}}$ every $f_{p}(x^{(p)})$
		supports the Lyapuonv function (\ref{LyaPDEs}). This completes the
		proof.
	\end{proof}
	
	\begin{lemma}\label{Gaolemma1}
		A state $x^*=\bigotimes_{p=0}^{\ell}
		{x^{(p)}}^{*}\in\mathbb{R}^n_{\textgreater 0}$ is a positive
		equilibrium of a Com-$\ell$Sub1 MAS if and only if for any
		$p=1,\cdots,\ell$, ${x^{(p)}}^{*}\in\mathbb{R}^{n_p}_{\textgreater
			0}$ is a positive equilibrium of the sub-system
		{$(\mathcal{S}^{(p)},\mathcal{C}^{(p)},\mathcal{R}^{(p)},\mathcal{K}^{(p)})$}.
	\end{lemma}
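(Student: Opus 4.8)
The plan is to exploit the Cartesian-product (block) structure of the network: because the species sets $\mathcal{S}^{(p)}$ are pairwise disjoint, the reaction vectors ${v'_{\cdot i}}^{(p)}-v_{\cdot i}^{(p)}$ are supported on mutually disjoint coordinate blocks, so the kinetic equation (\ref{kinetic equation s=n}) splits into $\ell+1$ independent blocks, each of which is exactly the dynamics of one sub-network. The equivalence then reduces to the elementary fact that a direct sum of vectors living in complementary coordinate subspaces vanishes if and only if each summand vanishes.

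First I would rewrite the right-hand side of (\ref{kinetic equation s=n}) using the two identities already established in the proof of \textit{Lemma 5.5}, namely $x^{v_{\cdot i}^{(p)}}={x^{(p)}}^{v_{\cdot i^{(p)}}}$ together with the fact that ${v'_{\cdot i}}^{(p)}-v_{\cdot i}^{(p)}$ has all entries outside the $p$th block equal to zero. Writing $\dot x=\bigotimes_{p=0}^{\ell}\dot x^{(p)}$ in accordance with the decomposition $x=\bigotimes_{p=0}^{\ell}x^{(p)}$, these identities give
\[
\dot x^{(p)}=\sum_{i=1}^{r_{p}} k_{i}^{(p)}\,{x^{(p)}}^{v_{\cdot i^{(p)}}}\bigl(v'_{\cdot i^{(p)}}-v_{\cdot i^{(p)}}\bigr),
\]
which is precisely the governing equation of the isolated sub-network $\{\mathcal{S}^{(p)},\mathcal{C}^{(p)},\mathcal{R}^{(p)}\}$ evaluated at $x^{(p)}$. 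Since $\mathscr{S}=\bigotimes_{p=0}^{\ell}\mathscr{S}^{(p)}$ is a direct sum by \textit{Lemma 5.1}, this block splitting is legitimate and each $\dot x^{(p)}$ lies in $\mathscr{S}^{(p)}$.

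With the decoupling in hand, both implications are immediate. For the positivity, $x^{*}=\bigotimes_{p=0}^{\ell}{x^{(p)}}^{*}\in\mathbb{R}^n_{\textgreater 0}$ holds if and only if each ${x^{(p)}}^{*}\in\mathbb{R}^{n_p}_{\textgreater 0}$, again by the Cartesian-product structure. For the dynamical condition, evaluating the block decomposition at $x^{*}$ shows that $\dot x|_{x=x^{*}}=\mathbbold{0}_n$ is the direct sum of the blocks $\dot x^{(p)}|_{x^{(p)}={x^{(p)}}^{*}}$, which lie in the disjoint subspaces $\mathscr{S}^{(p)}$; hence the total derivative vanishes if and only if every block vanishes, i.e. if and only if each ${x^{(p)}}^{*}$ is a positive equilibrium of the corresponding sub-network (including the complex balanced one indexed by $p=0$).

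I expect no deep obstacle here: the only point requiring care is the justification that a vector in $\bigotimes_{p=0}^{\ell}\mathscr{S}^{(p)}$ is zero exactly when each of its $\mathscr{S}^{(p)}$-components is zero, which is guaranteed by the independence assumption $\mathcal{S}^{(p)}\cap\mathcal{S}^{(q)}=\emptyset$ for $p\neq q$ and the resulting direct-sum decomposition of \textit{Lemma 5.1}. One should also be careful to state the index range consistently, since the equivalence in fact holds for every block $p\in\{0,\cdots,\ell\}$.
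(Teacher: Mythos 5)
Your proposal is correct and follows the same route as the paper, whose proof simply says the result is immediate upon inserting $x^{*}$ into the dynamics (\ref{kinetic equation s=n}); you have merely spelled out the block decoupling that makes this immediate. Your remark that the equivalence really concerns all blocks $p\in\{0,\cdots,\ell\}$, including $p=0$, is a fair observation about the statement's index range.
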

	
	\begin{proof}
		The result is immediate by inserting the state
		$x^*=\bigotimes_{p=0}^{\ell}
		{x^{(p)}}^{*}\in\mathbb{R}^n_{\textgreater 0}$ into the dynamics of
		the Com-$\ell$Sub1 MAS (\ref{kinetic equation s=n}).
	\end{proof}
	
	\begin{theorem}{\label{thm stable s=n}}
		For any $\ell\geq 1$, consider a Com-$\ell$Sub1 mass action system with the
		sub-system
		{$(\mathcal{S}^{(0)},\mathcal{C}^{(0)},\mathcal{R}^{(0)},\mathcal{K}^{(0)})$}
		admitting a complex balanced equilibrium
		${x^{(0)}}^*\in\mathbb{R}^{n_0}_{\textgreater 0}$ and other
		sub-systems
		{$(\mathcal{S}^{(p)},\mathcal{C}^{(p)},\mathcal{R}^{(p)},\mathcal{K}^{(p)})_{p=1}^{\ell}$}
		respectively admitting an equilibrium
		${x^{(p)}}^*\in\mathbb{R}^{n_p}_{\textgreater 0}$. Also, for any
		sub-system
		{$(\mathcal{S}^{(p)},\mathcal{C}^{(p)},\mathcal{R}^{(p)},\mathcal{K}^{(p)})~(p\in\{1,\cdots,\ell\})$}
		the boundary complex set is chosen as the naive boundary complex set
		$\bar{\mathcal{C}}_{\bar{x}^{(p)}}^{(p)}$ defined by (\ref{naive
			boundary complex set}), and
		$\bar{\mathcal{C}}_{\bar{x}^{(p)}}^{(p)}=\emptyset$ or $\bar{\mathcal{C}}_{\bar{x}^{(p)}}^{(p)}$ includes both reactant complexes and resultant complexes. If for all
		$\{p\}_{1}^{\ell}$ the conditions
		\begin{equation*}
		w_p^\top\frac{\partial }{\partial x^{(p)}}
		g_{p}(x^{(p)},1)\Big|_{x^{(p)}={x^{(p)}}^*}\textless 0
		\end{equation*}
		are true, then the Lyapunov Function PDEs induced by this
		Com-$\ell$Sub1 MAS are able to produce a solution \eqref{solution s=n} as a Lyapunov
		function serving for analyzing the local asymptotic stability of
		the network system. Here,
		$w_p\in\mathbb{R}^{n_p}\backslash\{\mathbbold{0}_{n_p}\}$ is a set
		of bases of $\mathscr{S}^{(p)}$ and $g_{p}(x^{(p)},u)$ is defined
		according to (\ref{gDefinition}).
	\end{theorem}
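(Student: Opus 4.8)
The plan is to show that the solution $f(x)=\sum_{p=0}^{\ell}f_p(x^{(p)})$ produced by the preceding theorem is an admissible Lyapunov function at $x^*=\bigotimes_{p=0}^{\ell}{x^{(p)}}^*$, by verifying three things: that $f$ has a strict local minimum at $x^*$ within the compatibility class, that $\dot f\le 0$, and that the dissipation vanishes only at $x^*$. The conclusion will then follow from the second Lyapunov theorem exactly as in \textit{Theorem 4.4} and \textit{Theorem 4.12}. The whole argument hinges on the product structure of the network, which lets every condition decompose across the independent sub-networks.

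First I would exploit the fact that the species sets $\mathcal{S}^{(p)}$ are mutually disjoint, so that $\nabla f(x)=\bigotimes_{p=0}^{\ell}\nabla f_p(x^{(p)})$ and the Hessian is block diagonal, $\nabla^2 f(x^*)=\mathrm{blockdiag}\big(\nabla^2 f_0({x^{(0)}}^*),\dots,\nabla^2 f_\ell({x^{(\ell)}}^*)\big)$. For any nonzero $v=\bigotimes_{p=0}^{\ell}v^{(p)}\in\mathscr{S}=\bigotimes_{p=0}^{\ell}\mathscr{S}^{(p)}$ (using \textit{Lemma 5.1}) one then gets $v^\top\nabla^2 f(x^*)v=\sum_{p=0}^{\ell}(v^{(p)})^\top\nabla^2 f_p({x^{(p)}}^*)v^{(p)}$. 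The $p=0$ term is strictly positive by the strict convexity of the Gibbs' free energy, whose Hessian is $\mathrm{diag}\{1/\xi_j\}$ as computed in \textit{Theorem 4.4}; each $p\ge 1$ term is nonnegative and vanishes only when $v^{(p)}=0$, by the local convexity \textit{Lemma 4.11} under the hypothesis $w_p^\top\frac{\partial}{\partial x^{(p)}}g_p({x^{(p)}}^*,1)<0$. Hence $v^\top\nabla^2 f(x^*)v>0$ for every nonzero $v\in\mathscr{S}$, so $f$ is locally strictly convex around $x^*$ in $(x^*+\mathscr{S})\cap\mathbb{R}^n_{\textgreater 0}$ and attains a strict local minimum there.

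Next, since $f$ solves the Lyapunov Function PDEs (\ref{LyaPDE s=n}) plus (\ref{BoundaryCondition s=n}), \textit{Theorem 4.1} gives $\dot f(x)\le 0$ with equality iff $\nabla f(x)\perp\mathscr{S}$. I would decompose this orthogonality through the product structure: because $\nabla f(x)=\bigotimes_{p=0}^{\ell}\nabla f_p(x^{(p)})$ and $\mathscr{S}=\bigotimes_{p=0}^{\ell}\mathscr{S}^{(p)}$, the condition $\nabla f\perp\mathscr{S}$ is equivalent to $\nabla f_p(x^{(p)})\perp\mathscr{S}^{(p)}$ for every $p$. For $p=0$ this forces $x^{(0)}$ to be a complex equilibrium of the zeroth sub-network (the computation in \textit{Theorem 4.4}), and complex balance makes that equilibrium unique in its positive class, so $x^{(0)}={x^{(0)}}^*$; for each $p\ge 1$, \textit{Lemma 4.10} forces $x^{(p)}$ to be a steady state of the $p$th sub-network. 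By \textit{Lemma 5.7}, $x$ is then an equilibrium of the whole Com-$\ell$Sub$1$ CRN.

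The main obstacle is upgrading ``$x$ is an equilibrium'' to ``$x=x^*$'' for the factors with $\mathrm{dim}\,\mathscr{S}^{(p)}=1$, since such sub-networks need not have a globally unique equilibrium in their compatibility class. I would resolve this locally, precisely as in \textit{Theorem 4.12}: each sub-network possesses only finitely many equilibria in each positive stoichiometric compatibility class, so equilibria are isolated; choosing a neighborhood of $x^*$ small enough to exclude every other equilibrium, the relation $\dot f(x)=0$ then holds within that neighborhood if and only if $x=x^*$. Combining the strict local minimum of $f$ at $x^*$ with $\dot f\le 0$ and this equality characterization, the second Lyapunov theorem yields local asymptotic stability of the Com-$\ell$Sub$1$ CRN at $x^*$, with $f(x)=\sum_{p=0}^{\ell}f_p(x^{(p)})$ serving as the Lyapunov function.
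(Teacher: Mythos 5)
Your proposal is correct and follows essentially the same route as the paper: decompose $f=\sum_{p=0}^{\ell}f_p(x^{(p)})$ across the independent sub-networks, invoke the positive definiteness of each $f_p$ and negative definiteness of each $\dot f_p$ at ${x^{(p)}}^*$ (via \textit{Theorem 4.4} for $p=0$ and \textit{Lemmas 4.10--4.11} for $p\ge 1$), and conclude by the second Lyapunov theorem. You merely make explicit what the paper leaves implicit — the block-diagonal Hessian computation, the factorization of the orthogonality condition $\nabla f\perp\mathscr{S}$, and the local isolation of equilibria in the one-dimensional factors — all of which are consistent with the paper's argument.
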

	
	\begin{proof}
		From \textit{Lemma \ref{Gaolemma1}}, since all sub-systems included in the
		Com-$\ell$Sub1 MAS have an equilibrium
		${x^{(p)}}^*\in\mathbb{R}^{n_p}_{\textgreater 0}~(\forall~
		p\in\{0,\cdots,\ell\})$, the Com-$\ell$Sub1 MAS admits a positive
		equilibrium $x^*=\bigotimes_{p=0}^{\ell}
		{x^{(p)}}^{*}\in\mathbb{R}^n_{\textgreater 0}$. Further from
		\textit{Theorem \ref{thm solution s=n}}, $f(x)=\sum_{p=0}^{\ell} f_{p}(x^{(p)})$
		defined by (\ref{solution s=n}) is a twice continuous differentiable solution of the Lyapunov
		Function PDEs (\ref{LyaPDE s=n}) plus (\ref{BoundaryCondition s=n}).
		{From the condition of} $w_p^\top\frac{\partial}{\partial x^{(p)}} g_{p}(x^{(p)},1)\Big|_{x^{(p)}={x^{(p)}}^*}\textless 0$ and {the continuity of $g(x,u)$ with respect to $u$,} there exist neighbourhoods of ${x^{(p)}}^*$, denoted as $\delta({x^{(p)}}^*)$ ($p=1,\cdots,\ell$), such that for all $x^{(p)}\in\delta({x^{(p)}}^*)$ we have
		\begin{equation*}
		w_p^\top\frac{\partial}{\partial x^{(p)}} g_{p}(x^{(p)},\tilde{u}^{(p)}(x^{(p)}))<0,
		\end{equation*}
		where $\tilde{u}^{(p)}(x^{(p)})$ makes $g_{p}(x^{(p)},u)=0$.
		Hence, let $\mu=\bigotimes_{p=0}^{\ell}\mu^{(p)}\in\mathscr{S}$ then for any $x\in\{R^{n_0}_{>0}\bigotimes_{p=1}^{\ell}\delta({x^{(p)}}^*)\}\bigcap(x^*+\mathscr{S})\bigcap R^{n}_{>0}$ we get
		\begin{eqnarray*}
			\mu^{\top} \nabla^{2}f(x) \mu
			&=& {\mu^{(0)}}^{\top}\text{diag}\left\{1/x^{(0)}_{1},\cdots,1/x^{(0)}_{n_{0}}\right\}\mu^{(0)}\\
			&+&\sum_{p=1}^\ell({\mu^{(p)}}^{\top}w_p)^{2} \cdot
			\frac{-w_p^\top\frac{\partial }{\partial x^{(p)}}
				g_p(x^{(p)},\tilde{u}^{(p)})
				/\frac{\partial}{\partial u^{(p)}} g_p(x^{(p)},\tilde{u}^{(p)})
			}
			{\tilde{u}(x^{(p)})} \geq 0.
		\end{eqnarray*}
		Clearly, the above equality holds if and only if $\mu=\mathbbold{0}_n$. This means that all conditions in \textit{Theorem \ref{thm lya function}} are satisfied, and the result is thus shown.
	\end{proof}
	
	\begin{example}
		Consider a Com-$\ell$Sub1 MAS ($\ell=1$) with the reaction route
		following
		\begin{equation}
		\begin{array}{c:c}
		~~\xymatrix{                & S_{2}^{(0)} \ar[dr]^{k_{2}^{(0)}}             \\
			S_{1}^{(0)} \ar[ur]^{k_{1}^{(0)}} & & S_{3}^{(0)}\ar[ll]_{k_{3}^{(3)}} }
		~~&~~
		\xymatrix{ S_{1}^{(1)}  \ar[r]^{k_{1}^{(1)}} &S^{(1)}_{2},\\
			2S^{(1)}_{2} \ar[r]^{k_{2}^{(1)}} &2S^{(1)}_{1}.}~~
		\end{array}
		\end{equation}
		The sub-system
		{$(\mathcal{S}^{(0)},\mathcal{C}^{(0)},\mathcal{R}^{(0)},\mathcal{K}^{(0)})$} is
		complex balanced that has complexes as
		\begin{equation*}
		v_{\cdot1^{(0)}}=v'_{\cdot 3^{(0)}}= \left(
		\begin{array}{c}
		1\\
		0\\
		0
		\end{array}
		\right),~ v_{\cdot2^{(0)}}=v'_{\cdot 1^{(0)}}= \left(
		\begin{array}{c}
		0\\
		1\\
		0
		\end{array}
		\right),~ v_{\cdot3^{(0)}}=v'_{\cdot 2^{(0)}}= \left(
		\begin{array}{c}
		0\\
		0\\
		1
		\end{array}
		\right)
		\end{equation*}
		and admits a {complex balanced equilibrium}
		${x^{(0)}}^*=\left(k_2^{(0)}k_3^{(0)},k_1^{(0)}k_3^{(0)},k_1^{(0)}k_2^{(0)}\right)^{\top}$.
		In addition, the sub-system
		{$(\mathcal{S}^{(1)},\mathcal{C}^{(1)},\mathcal{R}^{(1)},\mathcal{K}^{(1)})$} (the
		same as in Example 2) is of $1$-dimensional stoichiometric subspace
		with complexes
		\begin{equation*}
		v_{\cdot 1^{(1)}}= \left(
		\begin{array}{c}
		1\\ 0
		\end{array}
		\right),~~ v'_{\cdot 1^{(1)}}= \left(
		\begin{array}{c}
		0\\ 1
		\end{array}
		\right),~~ v_{\cdot 2^{(1)}}= \left(
		\begin{array}{c}
		0\\2
		\end{array}
		\right),~~ v'_{\cdot 2^{(1)}}= \left(
		\begin{array}{c}
		2\\ 0
		\end{array}
		\right)
		\end{equation*}
		and an equilibrium
		${x^{(1)}}^{*}=\left(2k^{(1)}_{2},\sqrt{k^{(1)}_{1}}\right)^{\top}$.
		Moreover, the naive boundary complex set is set as
		\begin{equation*}
		\bar{C}_{\bar{x}^{(1)}}= \left\{
		\begin{array}{ll}
		\left\{v_{\cdot 1^{(1)}},v'_{\cdot 2^{(1)}}\right\}, & \bar{x}^{(1)}=(\bar{x}_{{1}^{(1)}},0)^{\top}~\mathrm{with}~\bar{x}_{{1}^{(1)}}>0;\\
		\left\{v_{\cdot 2^{(1)}},v'_{\cdot 1^{(1)}}\right\}, &
		\bar{x}^{(1)}=(0,\bar{x}_{{2}^{(1)}})^{\top}~\mathrm{with}~\bar{x}_{{2}^{(1)}}>0.
		\end{array}
		\right.
		\end{equation*}
		By \textit{Theorem \ref{thm solution s=n}}, there exists a solution supporting the
		Lyapunov Function PDEs induced from this Com-$\ell$Sub1 MAS, written
		as
		$$f(x)=f_0(x^{(0)})+f_1(x^{(1)}),$$
		where $x^{(0)}=(x_{1^{(0)}},x_{2^{(0)}},x_{3^{(0)}})^\top$,
		$x^{(1)}=(x_{1^{(1)}},x_{2^{(1)}})^\top$, $x=x^{(0)}\bigotimes
		x^{(1)}$,
		$$f_0(x^{(0)})={x^{(0)}}^\top\mathrm{Ln}\left(\frac{x^{(0)}}{{x^{(0)}}^*}\right)-\mathbbm{1}_3^\top\left(x^{(0)}-{x^{(0)}}^*\right)$$
		and
		$$f_1(x^{(1)})=\int_{0}^{\gamma\left(x^{(1)}\right)}
		\ln{\frac{k^{(1)}_2\hat{y}^{v_{\cdot
						2^{(1)}}}+\sqrt{\left(k^{(1)}_2\right)^2\hat{y}^{2v_{\cdot
							2^{(1)}}}+4k^{(1)}_1k^{(1)}_2\hat{y}^{v_{\cdot 1^{(1)}}+v_{\cdot
							2^{(1)}}}}}{2k^{(1)}_1\hat{y}^{v_{\cdot 1^{(1)}}}}} \dd \tau.$$ In
		the above equation, $\hat{y}=y^\dag\left(x^{(1)}\right)+\tau w_1$
		and
		\begin{equation*}
		y^\dag\left(x^{(1)}\right)=\frac{1}{2}\left(
		\begin{array}{c}
		\mathbbm{1}_2^\top\\
		\mathbbm{1}_2^\top \\
		\end{array}
		\right)x^{(1)},~~ \gamma\left(x^{(1)}\right)=\frac{w_{1}^\top
			x^{(1)}}{2},~~ w_{1}=(-1,1)^{\top}.
		\end{equation*}
		Further, from
		$g_{1}\left(x^{(1)},u\right)=k^{(1)}_1x_{1^{(1)}}-k^{(1)}_2\left(x_{2^{(1)}}\right)^{2}u^{-1}-k^{(1)}_2\left(x_{2^{(1)}}\right)^2u^{-2}$
		we have
		\begin{equation*}
		w_1^\top\frac{\partial }{\partial
			x^{(1)}}g_{1}\left(x^{(1)},u\right)\Big|_{x^{(1)}={x^{(1)}}^*}=-k_1^{(1)}-4k_2^{(1)}x_{2^{(1)}}^*\textless
		0.
		\end{equation*}
		Therefore, based on \textit{Theorem \ref{thm stable s=n}}, $f(x)$ is {an} available
		Lyapunove function for the given Com-$\ell$Sub1 CRN and could
		suggest its equilibrium
		$\left({x^{(0)}}^{*},{x^{(1)}}^{*}\right)^\top$ to be locally
		asymptotically stable.
	\end{example}
	
	\subsection{Other Two Special CRNs with $\text{dim}\mathscr{S}\geq 2$}
	\begin{example}
	Consider a CRN of the form
	\begin{eqnarray}{\label{CRN s=2}}
	2S_{1}\stackrel{k_1}{\longrightarrow} S_{1}+S_{2},\notag \quad
	2S_{2}\stackrel{k_2}{\longrightarrow} S_{2}+S_{3}, \quad
	2S_{3}\stackrel{k_3}{\longrightarrow} S_{3}+S_{1}.\notag
	\end{eqnarray}
	{We have the species set $\mathcal{S}=\{S_1,S_2,S_3\}$, complex set $\mathcal{C}=\{v_{\cdot 1},v'_{\cdot 1},v_{\cdot 2},v'_{\cdot 2},v_{\cdot 3},v'_{\cdot 3}\}$, reaction set $\mathcal{R}=(v_{\cdot 1}\to v'_{\cdot 1},v_{\cdot 2}\to v'_{\cdot 2},v_{\cdot 3}\to v'_{\cdot 3})$ and kinetic set $\mathcal{K}=(k_1,k_2,k_3)$, where}
	\begin{equation*}
	v_{\cdot 1}=
	\left(
	\begin{array}{c}
	2\\0\\0\\
	\end{array}
	\right),
	v'_{\cdot 1}=
	\left(
	\begin{array}{c}
	1\\1\\0\\
	\end{array}
	\right),
	v_{\cdot 2}=
	\left(
	\begin{array}{c}
	0\\2\\0\\
	\end{array}
	\right),
	v'_{\cdot 2}=
	\left(
	\begin{array}{c}
	0\\1\\1\\
	\end{array}
	\right),
	v_{\cdot 3}=
	\left(
	\begin{array}{c}
	0\\0\\2\\
	\end{array}
	\right),
	v'_{\cdot 3}=
	\left(
	\begin{array}{c}
	1\\0\\1\\
	\end{array}
	\right),
	\end{equation*}
	{By the mass-action kinetics, the dynamics of the system is expressed as}
	\begin{equation}{\label{DynamicsS2}}
	\dot{x}(t)=\sum_{i=1}^{3} k_{i}x^{v_{\cdot i}}\left(v'_{\cdot i} -
	v_{\cdot i}\right),~~x\in\mathbb{R}^n_{\textgreater 0}.
	\end{equation}
	{Therefore,} we have $\text{dim}\mathscr{S}=\text{dim~span}(v'_{\cdot 1}-v_{\cdot
		1},v'_{\cdot 2}-v_{\cdot 2},v'_{\cdot 3}-v_{\cdot 3})=2$.
	Obviously,
	this network does not belong to any type of CRNs mentioned above.
	However, it has some special properties.
	
	\begin{lemma}
		In every positive stoichiometric compatibility class induced by any
		given $\tilde{x}\in\mathbb{R}^n_{\textgreater 0}$ and the
		stoichiometric subspace $\mathscr{S}$ of the MAS governed by
		(\ref{DynamicsS2}), any state $x$ of this MAS is constrained by
		$\mathbbm{1}_3^\top x=\mathbbm{1}_3^\top\tilde{x}$.
	\end{lemma}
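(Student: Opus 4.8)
The plan is to reduce the statement to a single linear-algebraic observation: the vector $\mathbbm{1}_3=(1,1,1)^\top$ is orthogonal to the stoichiometric subspace $\mathscr{S}$ of the CRN (\ref{CRN s=2}). Granting this, the conclusion follows at once, because any $x$ in the positive stoichiometric compatibility class induced by $\tilde{x}$ has the form $x=\tilde{x}+\xi$ with $\xi\in\mathscr{S}$, whence $\mathbbm{1}_3^\top x=\mathbbm{1}_3^\top\tilde{x}+\mathbbm{1}_3^\top\xi$, and the cross term $\mathbbm{1}_3^\top\xi$ vanishes as soon as $\mathbbm{1}_3\perp\mathscr{S}$.

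To establish the orthogonality, I would first read off the three generating reaction vectors from the parameter vectors already listed, namely $v'_{\cdot 1}-v_{\cdot 1}=(-1,1,0)^\top$, $v'_{\cdot 2}-v_{\cdot 2}=(0,-1,1)^\top$, and $v'_{\cdot 3}-v_{\cdot 3}=(1,0,-1)^\top$. A direct inner product shows $\mathbbm{1}_3^\top(v'_{\cdot i}-v_{\cdot i})=0$ for $i=1,2,3$; conceptually this is because each reaction in (\ref{CRN s=2}) replaces one molecule of one species by one molecule of another, so the total molecule count is conserved by every individual reaction. Since $\mathscr{S}$ is by definition the span of these three vectors, orthogonality on the spanning set extends by linearity to all of $\mathscr{S}$, giving $\mathbbm{1}_3\perp\mathscr{S}$.

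I do not expect a genuine obstacle here, as the argument is essentially a verification. The only points requiring care are administrative: confirming that the displayed reaction vectors are exactly those induced by the complexes of (\ref{CRN s=2}), and observing that the conclusion holds uniformly across all compatibility classes, since the spanning description of $\mathscr{S}$ does not depend on the base point $\tilde{x}$.
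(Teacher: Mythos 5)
Your proposal is correct and follows essentially the same route as the paper: both proofs establish that $(1,1,1)^{\top}\perp\mathscr{S}$ and then conclude from $x-\tilde{x}\in\mathscr{S}$ that $\mathbbm{1}_3^\top x=\mathbbm{1}_3^\top\tilde{x}$. Your version is marginally more explicit in that it actually computes the three reaction vectors and checks the inner products, which the paper merely asserts.
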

	
	\begin{proof}
		From $\mathscr{S}=\text{span}(v'_{\cdot 1}-v_{\cdot 1},v'_{\cdot
			2}-v_{\cdot 2},v'_{\cdot 3}-v_{\cdot 3})$, we have that the
		orthogonal complement space $\mathscr{S}^{\perp}$ of $\mathscr{S}$
		is of one dimension, and $(1,1,1)^{\top}$ can act as a set of bases
		of $\mathscr{S}^{\perp}$, i.e., $(1,1,1)^{\top}\perp\mathscr{S}$.
		Therefore, for every positive stoichiometric compatibility class
		$(\tilde{x}+\mathscr{S})\bigcap\mathbb{R}^{n}_{>0}$ induced by any
		given $\tilde{x}\in\mathbb{R}^n_{\textgreater 0}$ and $\mathscr{S}$,
		the state $x$ of the MAS satisfies $x-\tilde{x}\in\mathscr{S}$,
		i.e., $(x-\tilde{x})\perp(1,1,1)^{\top}$. We get $\mathbbm{1}_3^\top
		x=\mathbbm{1}_3^\top\tilde{x}$.
	\end{proof}
	
	\begin{lemma}
		The MAS governed by (\ref{DynamicsS2}) admits a unique equilibrium
		in each positive stoichiometric compatibility class. Furthermore,
		this sole equilibrium, denoted by $x^*\in\mathbb{R}^3_{\textgreater
			0}$, satisfies
		$\sqrt{k_{1}}x^{*}_{1}=\sqrt{k_{2}}x^{*}_{2}=\sqrt{k_{3}}x^{*}_{3}$.
	\end{lemma}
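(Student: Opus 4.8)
The plan is to first pin down the algebraic form of every positive equilibrium directly from the dynamics, and then to use the conservation law supplied by the preceding lemma to single out the unique equilibrium inside a prescribed class. Writing out $(\ref{DynamicsS2})$ componentwise gives
\begin{equation*}
\dot x_1 = k_3 x_3^2 - k_1 x_1^2,\qquad \dot x_2 = k_1 x_1^2 - k_2 x_2^2,\qquad \dot x_3 = k_2 x_2^2 - k_3 x_3^2 .
\end{equation*}
Setting $\dot x = \mathbbold{0}_3$, the second and third equations give $k_1 x_1^2 = k_2 x_2^2$ and $k_2 x_2^2 = k_3 x_3^2$, hence $k_1 x_1^2 = k_2 x_2^2 = k_3 x_3^2$ (the first equation being their consequence). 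Since $x \in \mathbb{R}^3_{>0}$ and every $k_i > 0$, taking positive square roots yields $\sqrt{k_1}\,x_1 = \sqrt{k_2}\,x_2 = \sqrt{k_3}\,x_3$, which already establishes the ``furthermore'' claim and shows that the positive equilibrium set is exactly the ray $\{\,x_i = s/\sqrt{k_i}\ (i=1,2,3) : s > 0\,\}$.

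For existence and uniqueness in a fixed class I would invoke the preceding lemma, which states that every $x$ in the positive class generated by $\tilde x$ obeys $\mathbbm{1}_3^\top x = \mathbbm{1}_3^\top \tilde x =: c$, with $c > 0$. Because $\mathscr{S} = \mathrm{span}\{(1,1,1)^\top\}^\perp$, this affine plane is precisely the compatibility class, so a state is an equilibrium of the class if and only if it lies on the ray above and satisfies the single scalar constraint $\mathbbm{1}_3^\top x = c$. Parametrizing the ray by $x_i = s/\sqrt{k_i}$ and substituting gives $s\bigl(k_1^{-1/2} + k_2^{-1/2} + k_3^{-1/2}\bigr) = c$, a linear equation in $s$ with the unique positive solution $s^* = c\bigl/\sum_{i=1}^{3} k_i^{-1/2}$. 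Setting $x^*_i = s^*/\sqrt{k_i}$ then produces one and only one equilibrium, which lies in $\mathbb{R}^3_{>0}$ because $s^* > 0$.

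I expect no serious obstacle; the statement is essentially a transversality count between a one-dimensional equilibrium ray and a two-dimensional class. The only point deserving a moment's care is confirming that the ray meets each positive class in a single point, and this is immediate from monotonicity: along the ray $\mathbbm{1}_3^\top x = s\sum_{i} k_i^{-1/2}$ is strictly increasing in $s$, so the ray crosses each level plane $\{\mathbbm{1}_3^\top x = c\}$ exactly once, and positivity of $c$ guarantees that the crossing occurs in the open orthant. Assembling the characterization of the equilibrium ray with this monotonicity completes the proof.
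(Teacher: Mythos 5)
Your proposal is correct and takes essentially the same route as the paper: both reduce the equilibrium condition to $k_1x_1^2=k_2x_2^2=k_3x_3^2$ (equivalently $\sqrt{k_1}x_1=\sqrt{k_2}x_2=\sqrt{k_3}x_3$ on the positive orthant) and combine it with the conservation relation $\mathbbm{1}_3^\top x=\mathbbm{1}_3^\top\tilde x$ from the preceding lemma. The only cosmetic difference is that the paper solves the combined constraints as a $3\times 3$ linear system with nonzero determinant $\sqrt{k_1k_2}+\sqrt{k_2k_3}+\sqrt{k_1k_3}$, whereas you parametrize the equilibrium ray and intersect it with the level plane, arriving at the same explicit $x^*$.
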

	
	\begin{proof}
		Let $(\tilde{x}+\mathscr{S})\bigcap\mathbb{R}^{n}_{>0}$ represent
		any positive stoichiometric compatibility class in which the state
		of the MAS following (\ref{DynamicsS2}) evolves. If the equilibrium
		$x^*\in\mathbb{R}^3_{\textgreater 0}$ {exists}, then it must satisfy
		\begin{equation*}
		\mathbbm{1}_3^\top x^*=\mathbbm{1}_3^\top\tilde{x}~~~\text{and}~~~k_{1}\left(x^{*}_{1}\right)^{2}=k_{2}\left(x^{*}_{2}\right)^{2}=k_{3}\left(x^{*}_{3}\right)^{2}.
		\end{equation*}
		Denote
		\begin{equation*}
		K=\left(
		\begin{array}{ccc}
		1 & 1 & 1\\
		\sqrt{k_{1}} & -\sqrt{k_{2}} & 0\\
		\sqrt{k_{1}} & 0 & -\sqrt{k_{3}} \\
		\end{array}
		\right),
		\end{equation*}
		then the above two relations can be integrated together and
		rewritten as
		\begin{equation*}
		Kx^*=\mathbbm{1}_3^\top\tilde{x}.
		\end{equation*}
		Since $\det(K)=\sqrt{k_1k_2}+\sqrt{k_2k_3}+\sqrt{k_1k_3}\neq 0$,
		$x^*$ {exists} and {is also} unique in
		$(\tilde{x}+\mathscr{S})\bigcap\mathbb{R}^{n}_{>0}$. Furthermore,
		$x^*$ supports the relation
		$\sqrt{k_{1}}x^{*}_{1}=\sqrt{k_{2}}x^{*}_{2}=\sqrt{k_{3}}x^{*}_{3}$,
		and
		\begin{equation*}
		x^*=\frac{\mathbbm{1}_3^\top\tilde{x}}{\sqrt{k_{1}k_{2}}+\sqrt{k_{2}k_{3}}+\sqrt{k_{1}k_{3}}}\left(\sqrt{k_{2}k_{3}},
		\sqrt{k_{1}k_{3}}, \sqrt{k_{1}k_{2}}\right)^{\top}.
		\end{equation*}
	\end{proof}
	
	Based on these two properties, it is not difficult to find a
	solution for the Lyapunov function PDEs (\ref{LyaPDEs}) and
	(\ref{BoundaryCondition}) of this MAS.
	
	\begin{theorem}
		For the MAS described by (\ref{DynamicsS2}), if the boundary complex
		set $\mathcal{C}_{\bar{x}}$ is set to be empty, then the Lyapunov
		function PDEs (\ref{LyaPDEs}) and (\ref{BoundaryCondition})
		generated by this MAS admit a solution
		\begin{equation}{\label{LyaSolutionS2}}
		f(x)=2x^\top\mathrm{Ln}\left(\frac{x}{x^*}\right)-2\mathbbm{1}_3^\top
		(x-x^*),
		\end{equation}
		where $x^{*}\in\mathbb{R}^3_{\textgreater 0}$ is an equilibrium of
		the MAS under consideration.
		Moreover this solution can behave as the Lyapunov function to suggest the MAS locally asymptotically stable at $x^{*}$.
	\end{theorem}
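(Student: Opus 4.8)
The plan is to verify (\ref{LyaSolutionS2}) directly against the Lyapunov Function PDEs. Because the boundary complex set $\mathcal{C}_{\bar{x}}$ is taken to be empty, both index sets $\{i\mid v_{\cdot i}\in\mathcal{C}_{\bar{x}}\}$ and $\{i\mid v'_{\cdot i}\in\mathcal{C}_{\bar{x}}\}$ in the boundary condition (\ref{BoundaryCondition}) are empty, so (\ref{BoundaryCondition}) collapses to the trivial identity $0=0$ and needs no further argument. Hence the only substantive task is to confirm that the stated $f(x)$ satisfies the PDE (\ref{LyaPDEs}), which I would do in two moves: first compute $\nabla f(x)$, then substitute it into the left-hand side of (\ref{LyaPDEs}) and exhibit a cyclic cancellation.

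First I would differentiate (\ref{LyaSolutionS2}). Since $\frac{\partial}{\partial x_{j}}\big[2x_{j}(\ln x_{j}-\ln x_{j}^{*})-2(x_{j}-x_{j}^{*})\big]=2\ln(x_{j}/x_{j}^{*})$, the gradient is simply $\nabla f(x)=2\,\mathrm{Ln}(x/x^{*})$. With the reaction vectors $v'_{\cdot 1}-v_{\cdot 1}=(-1,1,0)^{\top}$, $v'_{\cdot 2}-v_{\cdot 2}=(0,-1,1)^{\top}$ and $v'_{\cdot 3}-v_{\cdot 3}=(1,0,-1)^{\top}$, each exponent $(v'_{\cdot i}-v_{\cdot i})^{\top}\nabla f(x)$ is twice a logarithm of a monomial ratio, so that $\exp\{(v'_{\cdot 1}-v_{\cdot 1})^{\top}\nabla f(x)\}=\big(x_{2}x_{1}^{*}/(x_{1}x_{2}^{*})\big)^{2}$, and the analogous expressions for $i=2,3$ follow by cyclic shift of the indices.

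The decisive step is the cancellation in (\ref{LyaPDEs}). Using $x^{v_{\cdot 1}}=x_{1}^{2}$, the first summand of the left-hand side equals $k_{1}x_{1}^{2}-k_{1}(x_{1}^{*})^{2}x_{2}^{2}/(x_{2}^{*})^{2}$. Here the factor $2$ in (\ref{LyaSolutionS2}) is exactly what is required: it makes the subtracted monomial equal to $x_{2}^{2}=x^{v_{\cdot 2}}$ rather than an off-diagonal product such as $x_{1}x_{2}$. Invoking the equilibrium characterization $\sqrt{k_{1}}x_{1}^{*}=\sqrt{k_{2}}x_{2}^{*}=\sqrt{k_{3}}x_{3}^{*}$ of \textit{Lemma 5.10}, equivalently $k_{1}(x_{1}^{*})^{2}=k_{2}(x_{2}^{*})^{2}=k_{3}(x_{3}^{*})^{2}$, this summand reduces to $k_{1}x_{1}^{2}-k_{2}x_{2}^{2}$; by the same reduction the $i=2$ and $i=3$ summands become $k_{2}x_{2}^{2}-k_{3}x_{3}^{2}$ and $k_{3}x_{3}^{2}-k_{1}x_{1}^{2}$. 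Summing the three, the terms telescope to zero, which, together with the vacuously satisfied boundary condition, establishes the theorem.

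I do not anticipate a genuine obstacle: once the gradient is in hand the computation is a finite, cyclic telescoping identity, and the boundary requirement is vacuous under the empty-$\mathcal{C}_{\bar{x}}$ hypothesis. The only subtlety worth flagging is the role of the coefficient $2$ in (\ref{LyaSolutionS2}); because every reactant complex $v_{\cdot i}$ is second order, any other constant would leave the correction monomials misaligned with the reactant monomial of the successor reaction, and the telescoping---and thus the verification---would break down.
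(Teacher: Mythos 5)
Your proposal is correct and follows essentially the same route as the paper: substitute $\nabla f(x)=2\,\mathrm{Ln}(x/x^{*})$ into the left-hand side of (\ref{LyaPDEs}), use the equilibrium relation $\sqrt{k_{1}}x_{1}^{*}=\sqrt{k_{2}}x_{2}^{*}=\sqrt{k_{3}}x_{3}^{*}$ to convert each subtracted term $k_{i}(x_{i}^{*}/x_{i+1}^{*})^{2}x_{i+1}^{2}$ into $k_{i+1}x_{i+1}^{2}$ so the sum cancels cyclically, and note that the boundary condition is vacuous when $\mathcal{C}_{\bar{x}}=\emptyset$. Your added remark on why the factor $2$ is forced by the second-order reactant complexes is a useful observation not made explicit in the paper, but the substance of the argument is identical.
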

	
	\begin{proof}
		Substituting $\nabla f(x)=2\mathrm{Ln}\left(\frac{x}{x^*}\right)$
		into the L.H.S. of (\ref{LyaPDEs}) yields
		\begin{eqnarray*}
			\text{L.H.S~of~Eq.~(\ref{LyaPDEs})}&=&
			k_{1}x^{2}_{1}+k_{2}x_{2}^{2}+k_{3}x_{3}^{2}-k_{1}\left(\frac{x_{1}^{*}}{x_{2}^{*}}\right)^2x_{2}^{2}
			-k_{2}\left(\frac{x_{2}^{*}}{x_{3}^{*}}\right)^2x_{3}^{2}-k_{3}\left(\frac{x_{3}^{*}}{x_{1}^{*}}\right)^2x_{1}^{2}\\
			&=&
			k_{1}x^{2}_{1}+k_{2}x_{2}^{2}+k_{3}x_{3}^{2}-k_{2}x_{2}^{2}-k_{3}x_{3}^{2}-k_{1}x^{2}_{1}
			\\ &=& 0.
		\end{eqnarray*}
		Hence,
		$f(x)=2x^\top\mathrm{Ln}\left(\frac{x}{x^*}\right)-2\mathbbm{1}_3^\top$
		is a solution of the PDE (\ref{LyaPDEs}).
		In the meanwhile, the boundary
		condition of (\ref{BoundaryCondition}) is naturally true with
		$\mathcal{C}_{\bar{x}}=\emptyset$.
		Moreover, the Hessian matrix of $f(x)$ is expressed as
		\begin{equation*}
		\nabla^{2} f(x) =
		\left(
		\begin{array}{ccc}
		\frac{2}{x_{1}} && \\
		&\frac{2}{x_{2}}&\\
		&&\frac{2}{x_{3}}\\
		\end{array}
		\right)
		\end{equation*}
		which is positive definite in $\mathbb{R}^n_{\textgreater 0}$. Hence, the condition \eqref{eq. strictly convex condition} is satisfied for every state, and the asymptotic stability holds immediately from \textit{Theorem \ref{thm lya function}}.
	\end{proof}
	
	\begin{remark}
		The solution
		$f(x)=2x^\top\mathrm{Ln}\left(\frac{x}{x^*}\right)-2\mathbbm{1}_3^\top
		(x-x^*)$ is actually a function {similar to} the {pseudo-Helmholtz free energy function}. It can
		be rewritten as $f(x)=2G(x)$.
	\end{remark}
\end{example}	

\begin{example}
	The second CRN {has $3$-dimensional stoichiometric subspace} and is given by
	\begin{eqnarray}
	3S_{1} \stackrel{k_1}{\longrightarrow} 2S_1+S_{2}, \notag \quad
	2S_{2} \stackrel{k_2}{\longrightarrow} S_{2}+S_3, \notag \quad
	S_3 \stackrel{k_3}{\longrightarrow} S_1, \notag \quad
	0\stackrel{k_4}{\longrightarrow} S_{3}\stackrel{k_5}{\longrightarrow} 0\notag
	\end{eqnarray}
	{with the complexes being}
	\begin{equation*}
	v_{\cdot 1}=
	\left(
	\begin{array}{c}
	3\\0\\0\\
	\end{array}
	\right),
	v'_{\cdot 1}=
	\left(
	\begin{array}{c}
	2\\1\\0\\
	\end{array}
	\right),
	v_{\cdot 2}=
	\left(
	\begin{array}{c}
	0\\2\\0\\
	\end{array}
	\right),
	v'_{\cdot 2}=
	\left(
	\begin{array}{c}
	0\\1\\1\\
	\end{array}
	\right),
	v_{\cdot 3}=
	\left(
	\begin{array}{c}
	0\\0\\1\\
	\end{array}
	\right),
	v'_{\cdot 3}=
	\left(
	\begin{array}{c}
	1\\0\\0\\
	\end{array}
	\right),
	\end{equation*}
	\begin{equation*}
	v_{\cdot 4}= v'_{\cdot 5}=
	\left(
	\begin{array}{c}
	0\\0\\0\\
	\end{array}
	\right),
	v_{\cdot 5}= v'_{\cdot 4}=
	\left(
	\begin{array}{c}
	0\\0\\1\\
	\end{array}
	\right).
	\end{equation*}
{By setting $k_1=k_2=k_3=k_4=k_5=1$ {for simplicity}, the dynamical equation follows}
\begin{equation*}
	\left\{
	\begin{array}{rcl}
	    \dot{x_1}(t) &=& -x_{1}^{3}+x_3,\\
	    \dot{x_2}(t) &=& x_1^3-x_2^{2},  \\
	    \dot{x_3}(t) &=& x_2^2-2x_3+1
	\end{array}
	\right.
\end{equation*}
{and the Lypunov Function PDE is}
\begin{eqnarray}
	&&x_{1}^{3}+x_2^{2}+2x_3+1-x_{1}^{3}\exp\left\{- \frac{\partial f}{\partial x_{1}} +\frac{\partial f}{\partial x_{2}} \right\}
	- x_{2}^{2}\exp\left\{- \frac{\partial f}{\partial x_{2}} +\frac{\partial f}{\partial x_{3}} \right\} -x_{3}\exp\left\{\frac{\partial f}{\partial x_{1}}-\frac{\partial f}{\partial x_{3}} \right\} \notag \\
	&&-\exp\left\{ \frac{\partial f}{\partial x_{3}} \right\} - x_{3}\exp\left\{- \frac{\partial f}{\partial x_{3}} \right\} =0. \notag
\end{eqnarray}
{We choose the boundary complex to be empty set at any boundary point, then the boundary condition \mbox{\eqref{BoundaryCondition}} vanishes.}

{It is not difficult to verify that the following function}  $$f(x)=3\left(x_{1}\ln x_{1}-x_{1}\right)+2\left(x_{2}\ln x_{2}-x_{2}\right)+\left(x_{3}\ln x_{3}-x_{3}\right)$$
{is a solution of the above Lyapunov Function PDE. Moreover, this solution meets all conditions given in \textit{Theorem} \mbox{\ref{thm lya function}}, so it can work for analyzing the asymptotic stability of the MAS. }

\end{example}

 \subsection{Computational verification for a $2$-dimensional CRN}
 {The last CRN has the form}
 \begin{equation*}
 	2S_1 \stackrel{k_1}{\longrightarrow}  S_2 \stackrel{k_2}{\longrightarrow}  S_1 \stackrel{k_3}{\longleftarrow}  0,
 \end{equation*}
{where the complexes are}
 \begin{equation*}
 v_{\cdot 1}=
 \left(
 \begin{array}{c}
 2\\ 0
 \end{array}
 \right),~~
 v'_{\cdot 1}=
 \left(
 \begin{array}{c}
 0\\ 1
 \end{array}
 \right),~~
 v_{\cdot 2}=
 \left(
 \begin{array}{c}
 0\\1
 \end{array}
 \right),~~ v'_{\cdot 2}= \left(
 \begin{array}{c}
 1\\ 0
 \end{array}
 \right),
 ~~
 v_{\cdot 3}=
 \left(
 \begin{array}{c}
 0\\0
 \end{array}
 \right),~~ v'_{\cdot 3}= \left(
 \begin{array}{c}
 1\\ 0
 \end{array}
 \right).
 \end{equation*}
 {We also set $k_1=k_2=k_3=1$ {for simplicity}, and thus write the dynamics as}
  \begin{equation*}
 	\left\{
 	\begin{array}{cl}
 	\dot{x_1}(t) & =-2x_1^2+x_2+1, \\ \dot{x_{2}}(t) & =x_1^2-x_2.
 	\end{array}
 	\right.
 \end{equation*}
{Although this CRN looks simple and the dimension of its stoichiometric subspace is only $2$, to our knowledge it is difficult to find a solution for its Lyapunov Function PDE}
 \begin{equation}\label{Lya pde in the numerical example}
 	x_1^2+x_2+1
 	-x_1^2\exp\left\{-2\frac{\partial f}{\partial x_1}+\frac{\partial f}{\partial x_2}\right\}
 	-x_2\exp\left\{\frac{\partial f}{\partial x_1}-\frac{\partial f}{\partial x_2}\right\}
 	-\exp\left\{\frac{\partial f}{\partial x_1}\right\}
 	=0.
 \end{equation}
 {We thus try to make a computational verification for this example. }

 {Note that $(x_1^*,x_2^*)^\top=(1,1)^\top$ is the unique equilibrium in the network system, and there are three types of native boundary complex sets according to the boundary points considered, that is}
 \begin{equation*}
\bar{C}_{\bar{x}}= \left\{
\begin{array}{ll}
\left\{(0,0)^{\top},(0,1)^{\top}\right\}, & \bar{x}=(\bar{x}_{1},0)^{\top}~\mathrm{with}~\bar{x}_{1}>0;\\
\left\{(0,0)^{\top},(1,0)^{\top},(2,0)^{\top}\right\}, &
\bar{x}=(0,\bar{x}_{2})^{\top}~\mathrm{with}~\bar{x}_{2}>0; \\
\{(0,0)^{\top}\}, & \bar{x}=(0,0)^{\top}.
\end{array}
\right.
\end{equation*}
{In order to observe whether $f(x)$ {has the potential} to act as the Lyapunov function, we make a Taylor expansion about it at the equilibrium $(1,1)^\top$. For simplicity but without loss of generality, the expansion is made up to the third order. Fig. \mbox{\ref{Fig. numerical example}} exhibits the simulation results about $f(x)$ in sub-figure (a) and about the minimum eigenvalue, denoted by $\lambda_\text{min}$, of its Hessian matrix in sub-figure (c). From the sub-figure (a) and the corresponding contours sub-figure (b), it is suggested that $f(x)$ is convex with the minimum evaluated at the equilibrium. Further from the sub-figure (c) and the corresponding contours sub-figure (d), there exists a neighbourhood around the equilibrium in which $f(x)$ is strictly convex. This indicates that $f(x)$ meets all conditions requested in \textit{Theorem} \mbox{\ref{thm lya function}}. Hence, the computational simulation also supports that the Lyapunov Function PDE method is valid. }

\begin{figure}
 	\centering
 	\includegraphics[width=0.9\linewidth]{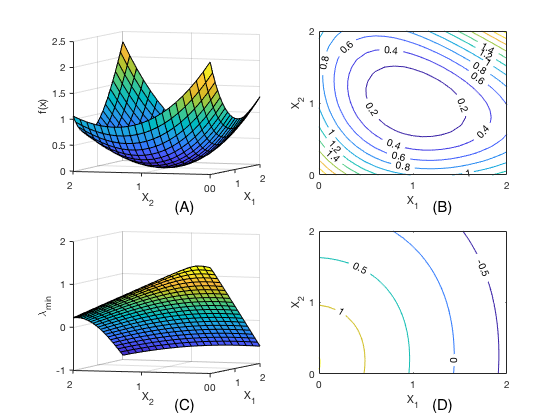}
 	\caption{{Simulation results for the Lyapunov Function PDE \mbox{(\ref{Lya pde in the numerical example})}: (a) $f(x)$; (b) the contours of $f(x)$; (c) $\lambda_{\mathrm{min}}$; (d) the contours of $\lambda_{min}$. }}
 	{\label{Fig. numerical example}}
 \end{figure}
	
	\section{Conclusion and a Conjecture}
	This paper is devoted to developing the Lyapunov function with clear
	physical meaning for stability analysis for chemical reaction
	networks. We have attempted to address this issue by establishing
	approximation from a microscopic concept of CRNs, the scaling
	non-equilibrium potential, to the macroscopic notation of the
	candidate Lyapunov function. After rewriting the Chemical Master
	Equation skillfully, we have succeeded in implementing the
	approximation and transformed the ODE into a PDE, which together
	with the developed boundary condition yields the Lyapunov Function
	PDEs. And then, we have proved that the solution (if exists) of the
	PDEs is dissipative, and thus has great potential to become a
	Lyapunov function. {Next, we have applied the Lyapunov Function PDEs
	to complex-balanced CRNs and general CRNs with $1$-dimensional
	stoichiometric subspace. For both cases, we construct their solutions that can act as Lyapunov functions rendering the respective system to be locally asymptotically stable.}
	Finally, we
	have extended the applications of the Lyapunov Function PDEs to some
	special CRNs with more than $2$-dimensional stoichiometric subspace,
	and {showed} that the PDEs also work validly for them in stability
	analysis. %by generating a solution as an available Lyapunov function.
	
	Notwithstanding the performance illustrated by the Lyapunov Function
	PDEs is very encouraging, there are still some problems needed to be
	explored in the future. One of the most urgent problems is to prove
	that the Lyapunov Function PDEs CAN or CANNOT serve for general CRNs
	with more than $2$-dimensional stoichiometric subspace. This may be
	an extremely arduous task, however, we are inclined to think they
	can. We summarize the proof task as a conjecture: ``\textit{For any
		mass action system that admits a stable positive equilibrium,
		if the boundary complex set is equipped properly, then the Lyapunov
		Function PDEs induced by this system have a solution qualified as a
		Lyapunov function to suggest the system to be locally
		asymptotically stable at the equilibrium."}	{The converse problem is also interesting, i.e., will all Lyapunov functions be solutions to the PDEs in some sense?}

%\bibliographystyle{siamplain}
%\bibliography{our_paper}

\begin{thebibliography}{10}
	
	\bibitem{Ali2014}
	{\sc M.~Ali Al-Radhawi and D.~Angeli}, {\em New approach to the stability of
		chemical reaction networks: Piecewise linear in rates lyapunov functions},
	IEEE T. Automat. Contr., 61 (2016), pp.~76--89.
	
	\bibitem{Anderson2015}
	{\sc D.~F. Anderson, G.~Craciun, M.~Gopalkrishnan, and C.~Wiuf}, {\em Lyapunov
		functions, stationary distributions, and non-equilibrium potential for
		reaction networks}, B. Math. Biol., 77 (2015), pp.~1744--1767.
	
	\bibitem{Anderson2010}
	{\sc D.~F. Anderson, G.~Craciun, and T.~G. Kurtz}, {\em Product-form stationary
		distributions for deficiency zero chemical reaction networks}, B. Math.
	Biol., 72 (2010), pp.~1947--1970.
	
	\bibitem{Anderson2011}
	{\sc D.~F. Anderson and T.~G. Kurtz}, {\em Continuous time markov chain models
		for chemical reaction networks}, in Design and analysis of biomolecular
	circuits, Springer, 2011, pp.~3--42.
	
	\bibitem{anderson2015stochastic}
	{\sc D.~F. Anderson and T.~G. Kurtz}, {\em Stochastic analysis of biochemical
		systems}, vol.~1, Springer, 2015.
	
	\bibitem{Angeli2009}
	{\sc D.~Angeli}, {\em A tutorial on chemical reaction network dynamics}, Eur.
	J. Control, 15 (2009), pp.~398--406.
	
	\bibitem{Angeli2007}
	{\sc D.~Angeli, P.~De~Leenheer, and E.~D. Sontag}, {\em A petri net approach to
		the study of persistence in chemical reaction networks}, Math. Biosci., 210
	(2007), pp.~598--618.
	
	\bibitem{Craciun2013}
	{\sc G.~Craciun, F.~Nazarov, and C.~Pantea}, {\em Persistence and permanence of
		mass-action and power-law dynamical systems}, SIAM Journal on Applied
	Mathematics, 73 (2013), pp.~305--329.
	
	\bibitem{Ethier2009}
	{\sc S.~N. Ethier and T.~G. Kurtz}, {\em Markov processes: characterization and
		convergence}, vol.~282, John Wiley \& Sons, 2009.
	
	\bibitem{Feinberg1972}
	{\sc M.~Feinberg}, {\em Complex balancing in general kinetic systems}, Arch.
	Ration. Mech. An., 49 (1972), pp.~187--194.
	
	\bibitem{Feinberg1987}
	{\sc M.~Feinberg}, {\em Chemical reaction network structure and the stability
		of complex isothermal reactors??i. the deficiency zero and deficiency one
		theorems}, Chem. Eng. Sci., 42 (1987), pp.~2229--2268.
	
	\bibitem{Feinberg1988}
	{\sc M.~Feinberg}, {\em Chemical reaction network structure and the stability
		of complex isothermal reactors??ii. multiple steady states for networks of
		deficiency one}, Chem. Eng. Sci., 43 (1988), pp.~1--25.
	
	\bibitem{Feinberg1989}
	{\sc M.~Feinberg}, {\em Necessary and sufficient conditions for detailed
		balancing in mass action systems of arbitrary complexity}, Chem. Eng. Sci.,
	44 (1989), pp.~1819--1827.
	
	\bibitem{Feinberg1995}
	{\sc M.~Feinberg}, {\em The existence and uniqueness of steady states for a
		class of chemical reaction networks}, Arch. Ration. Mech. An., 132 (1995),
	pp.~311--370.
	
	\bibitem{Gopalkrishnan2014}
	{\sc M.~Gopalkrishnan, E.~Miller, and A.~Shiu}, {\em A geometric approach to
		the global attractor conjecture}, SIAM Journal on Applied Dynamical Systems,
	13 (2014), pp.~758--797.
	
	\bibitem{Gorban2014}
	{\sc A.~N. Gorban}, {\em General h-theorem and entropies that violate the
		second law}, Entropy, 16 (2014), pp.~2408--2432.
	
	\bibitem{Higham2008}
	{\sc D.~J. Higham}, {\em Modeling and simulating chemical reactions}, SIAM
	Rev., 50 (2008), pp.~347--368.
	
	\bibitem{Horn1972}
	{\sc F.~Horn and R.~Jackson}, {\em General mass action kinetics}, Arch. Ration.
	Mech. An., 47 (1972), pp.~81--116.
	
	\bibitem{Kurtz1972}
	{\sc T.~G. Kurtz}, {\em The relationship between stochastic and deterministic
		models for chemical reactions}, The Journal of Chemical Physics, 57 (1972),
	pp.~2976--2978.
	
	\bibitem{li2016systematic1}
	{\sc Y.~Li and Y.~Yi}, {\em Systematic measures of biological networks ii:
		Degeneracy, complexity, and robustness}, Commun. Pur. Appl. Math., 69 (2016),
	pp.~1952--1983.
	
	\bibitem{li2016systematic2}
	{\sc Y.~Li and Y.~Yi}, {\em Systematic measures of biological networks, part i:
		Invariant measures and entropy}, Commun. Pur. Appl. Math., 69 (2016),
	pp.~1777--1811.
	
	\bibitem{oksendal2005applied}
	{\sc B.~K. {\O}ksendal and A.~Sulem}, {\em Applied stochastic control of jump
		diffusions}, vol.~498, Springer, 2005.
	
	\bibitem{Pantea2012}
	{\sc C.~Pantea}, {\em On the persistence and global stability of mass-action
		systems}, SIAM Journal on Mathematical Analysis, 44 (2012), pp.~1636--1673.
	
	\bibitem{Rao2013}
	{\sc S.~Rao, A.~van~der Schaft, and B.~Jayawardhana}, {\em A graph-theoretical
		approach for the analysis and model reduction of complex-balanced chemical
		reaction networks}, J. Math. Chem., 51 (2013), pp.~2401--2422.
	
	\bibitem{Resnick2013}
	{\sc S.~I. Resnick}, {\em Adventures in stochastic processes}, Springer Science
	\& Business Media, 2013.
	
	\bibitem{siegel2000global}
	{\sc D.~Siegel and D.~MacLean}, {\em Global stability of complex balanced
		mechanisms}, Journal of Mathematical Chemistry, 27 (2000), pp.~89--110.
	
	\bibitem{Sontag2001}
	{\sc E.~D. Sontag}, {\em Structure and stability of certain chemical networks
		and applications to the kinetic proofreading model of t-cell receptor signal
		transduction}, IEEE T. Automat. Contr., 46 (2001), pp.~1028--1047.
	
	\bibitem{Szederkenyi2011}
	{\sc G.~Szederk{\'e}nyi and K.~M. Hangos}, {\em Finding complex balanced and
		detailed balanced realizations of chemical reaction networks}, J. Math.
	Chem., 49 (2011), pp.~1163--1179.
	
	\bibitem{Schaft2013}
	{\sc A.~van~der Schaft, S.~Rao, and B.~Jayawardhana}, {\em On the mathematical
		structure of balanced chemical reaction networks governed by mass action
		kinetics}, SIAM J. Appl. Math., 73 (2013), pp.~953--973.
	
\end{thebibliography}

\end{document}